\newtheorem{thm}{Theorem}[section]
\newtheorem{lem}[thm]{Lemma}
\newtheorem{prop}[thm]{Proposition}
\newtheorem{cor}[thm]{Corollary}
\newtheorem{rem}[thm]{Remark}
\title{Generic numerical semigroups}
\author{Roland Bacher}
\begin{document}
\maketitle

\begin{abstract} The use of compositions simplifies some aspects of the
theory of numerical semigroups. We illustrate this by giving a
new proof for the asymptotic number $C((1+\sqrt{5})/2)^g$
of numerical semigroups of genus $g$ and by describing
the constant $C$ explicitly\footnote{Keywords: Numerical semigroup, composition,
 Fibonacci numbers, spin model, dihedral group.
 Math. class:  
 20M14, 05A16.
}.
\end{abstract}





\section{Introduction}

\epigraph{..., 
chacun appelant id\'ees claires celles qui sont au m\^eme degr\'e de
confusion que les siennes propres.}{\textit{Marcel Proust}}


A \emph{numerical semigroup}\footnote{Numerical semigroups are a hot topic 
during outbreaks of contagious diseases: You don't want 
to take change at grocery stores. 
(This paper was largely written
during the Corona-virus lock-down.)} 
is a subgroup $S=S+S$ of the
additive semigroup $\mathbb N=\mathbb N+\mathbb N$
such that the complementary set $\mathbb N\setminus S$ is finite. 
The \emph{embedding dimension} $e=e(S)$ of $S$ is the minimal
cardinal of a generating set. The smallest non-zero
element $m=m(S)=\min(S\setminus\{0\})$ of $S$ 
is the \emph{multiplicity} of $S$. The finite set
$G=G(S)=\mathbb N\setminus S$ is the \emph{set of gaps} and
the number $g=g(S)=\sharp(G)$ of elements in $G$ is the \emph{genus}
of $S$. The \emph{Frobenius number} $f=f(S)=\max(G)$ is the maximal
element of $G$.

Throughout the rest of the paper, the letter $m$ will always denote
the multiplicity $\min(S\setminus\{0\})$ of a numerical semigroup $S$.

For $j=0,\dots,m-1$, we set
\begin{align}\label{defxj}
  x_j&=x_j(S)=\sharp\{G\cap j+m\mathbb Z\}\ .
\end{align}
We have $x_0=0$ and $x_1,\dots,x_{m-1}\geq 1$. The vector
$(x_1,\ldots,x_{m-1})$ is called the \emph{Kunz coordinate vector}
or simply the \emph{Kunz vector} of $S$. It is also called
the \emph{Ap\'ery tuple} of $S$, see e.g. \cite{Kaplan}.
The trivial identity
$$g=\sum_{j=1}^{m-1} x_j$$
shows that $x_1+x_2+\cdots+ x_{m-1}$ is a composition of $g$
into $m-1$ parts. The Frobenius number $f$ of $S$ is given
by 
$$\max_{j\in\{1,\ldots,m-1\}} j+m(x_j-1)\ .$$
The Frobenius number $f=l+m(x_l-1)$ is equivalently defined
in terms of the index and the value of the last maximal part $x_l$
of the composition $x_1+\cdots +x_{m-2}$ with parts defined by (\ref{defxj}).

We call the composition $x_1+\cdots+x_{m-1}$ the \emph{composition of  $S$}.
It is closely related to the \emph{Ap\'ery set}
$$\mathrm{Ap}(m,S)=\{0,x_1m+1,x_2m+2,\ldots,x_{m-1}m+m-1\}$$
consisting of minimal representatives in $S$ for classes modulo
the multiplicity $m=\min(S\setminus\{0\})$ of $S$.

\begin{rem}\label{remgroupalgebra}
By a fortunate coincidence, the letter $m$ (denoting the multiplicity
of $S$) is also well-suited for denoting classes modulo $m\mathbb Z$
of $\mathbb Z$. The composition $x_1+\cdots+x_{m-1}$
can be considered as the image in the group algebra
$\mathbb Z[\mathbb Z/m\mathbb Z]$ of the characteristic function
$\sum_{\gamma\in \mathbb N\setminus S}[\gamma]$ for the gap-set 
in the group algebra $\mathbb Z[\mathbb Z]$ of $\mathbb Z$.
The trivial character applied to the characteristic function of
$\mathbb N\setminus S$ yields the genus of $S$.
\end{rem}

It is easy to show that a numerical semigroup is determined by
its composition with parts defined by (\ref{defxj}). We will recall a 
well-known characterisation of compositions associated to numerical
semigroups. We can thus replace the tree-structure underlying numerical
semigroups by the (essentially) linear structure of 
compositions when studying certain properties of numerical semigroups. 
This reduces the study of asymptotic growth
(for numbers of numerical semigroups of given genus)
to elementary considerations boiling often down
to ``generatingfunctionology'', after making use of simple
geometric properties of (integers contained in) intervals.

We use this approach in a new proof for asymptotics 
conjectured by Maria Bras-Amor\'os in \cite{Bras}
and proven by Alex Zhai in \cite{Zhai} (a pleasant overview of this
topic describing the gist of Zhai's proof
is given in \cite{Kaplan})
for the number $n(g)$ of semigroups of genus $g$:
\begin{thm}\label{thmupperboundng} There exists a constant $C$ such that 
\begin{align}\label{formulaasymptsemigrps}
\lim_{g\rightarrow\infty}
\frac{n(g)}{\omega^g}&=C
\end{align}
where $\omega=\frac{1+\sqrt{5}}{2}=1.61803398874989\ldots$
is the golden number.

The constant $C$ is given by 
\begin{align}\label{formulaforC}
C&=\frac{5+\sqrt{5}}{10}\left(1+\tilde C(\omega^{-1})\right)
\end{align}
where $\tilde C(q)$ is the generating function
\begin{align}\label{formulafortildeC}
\tilde C(q)&=\sum_{g=3}^\infty \tilde c_gq^g
\end{align}
enumerating the number $\tilde c_g$ of numerical semigroups
$S$ of genus $g\geq 3$
satisfying the identity $f=3m-1$ linking their Frobenius number
$f=\max(\mathbb N\setminus S)$ with their multiplicity
$m=\min(S\setminus\{0\})$.

The series $\tilde C(q)$ defines a holomorphic function in an open
disc of radius strictly larger than $\omega^{-1}$.
\end{thm}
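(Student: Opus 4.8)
The plan is to reduce the statement to a purely combinatorial estimate through the composition picture, and then to control the resulting generating function by a geometric series.

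First I would invoke the characterisation of semigroup compositions recalled earlier in the paper. By the displayed formula for the Frobenius number, the condition $f=3m-1$ is equivalent to: every part satisfies $x_j\le 3$ and the last part is $x_{m-1}=3$. For a composition $x_1+\dots+x_\ell$ with $\ell=m-1$ and parts in $\{1,2,3\}$ the Kunz inequalities simplify drastically: those with $i+j>m$ hold automatically because the left-hand side is $\ge 3$, and those with $i+j<m$ can fail only when $x_i=x_j=1$ while $x_{i+j}=3$. Hence, writing $T_k:=\{j:x_j=k\}$, the semigroups counted by $\tilde c_g$ correspond bijectively to triples $(\ell,T_1,T_3)$ where $T_1\subseteq\{1,\dots,\ell-1\}$ contains no two elements (a repetition allowed) summing to $\ell$, where $T_3$ is any subset of $\{1,\dots,\ell\}$ containing $\ell$ and disjoint from $T_1+T_1$, where the remaining positions carry the value $2$, and $g=|T_1|+3|T_3|+2(\ell-|T_1|-|T_3|)$.

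Next I would introduce a variable $q$ marking $g$ and sum over $T_3$ first, for fixed $(\ell,T_1)$: the positions of $T_1$ contribute $q$ each, the $p(T_1):=|((T_1+T_1)\setminus T_1)\cap\{1,\dots,\ell\}|$ positions forced to value $2$ contribute $q^2$ each, the position $\ell$ contributes $q^3$, and each remaining position contributes $q^2+q^3$. This yields a closed form of the shape
\[
\tilde C(q)=\frac{q}{1+q}\sum_{\ell\ge1}\bigl(q^2(1+q)\bigr)^{\ell}\,\Sigma_\ell(q),\qquad
\Sigma_\ell(q):=\sum_{T_1\in\mathcal T_\ell}\frac{1}{\bigl(q(1+q)\bigr)^{|T_1|}(1+q)^{p(T_1)}},
\]
where $\mathcal T_\ell$ is the set of admissible $T_1$. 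The golden ratio enters transparently: at $q=\omega^{-1}$ one has $1+q=\omega$, $q(1+q)=1$, $q^2(1+q)=\omega^{-1}$, so the expression collapses to $\omega^{-2}\sum_\ell\omega^{-\ell}\sum_{T_1\in\mathcal T_\ell}\omega^{-p(T_1)}$. Since this is a series of positive terms, it suffices to prove a bound $\Sigma_\ell(q)\le K\,(1+\tfrac1{q(1+q)})^{\ell/2}$ such that $\bigl(q^2(1+q)\bigr)^2(1+\tfrac1{q(1+q)})<1$ throughout a disc of radius strictly larger than $\omega^{-1}$; then $\tilde C(q)$ converges absolutely there and is holomorphic.

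This last estimate on $\Sigma_\ell(q)$ is the heart of the matter and the main obstacle. Admissibility forces $T_1$ to meet each orbit $\{i,\ell-i\}$ of the involution $i\mapsto\ell-i$ in at most one point, which gives at once the naive bound $\Sigma_\ell(q)\le(1+\tfrac2{q(1+q)})^{\lceil\ell/2\rceil}$; but this is \emph{not} enough, because the relevant quantity is then $q^4(1+q)^2+2q^3(1+q)$, which already equals $3\omega^{-2}>1$ at $q=\omega^{-1}$. What must be exploited is a sumset lower bound: $p(T_1)$ cannot be small unless $T_1$ is concentrated in the top half of $\{1,\dots,\ell-1\}$ — splitting $T_1=A\sqcup B$ into its low and high parts (around $\ell/2$), the sums $B+B$ fall beyond $\ell$ and are harmless, while $A+A$ and $A+B$ force of order $|A|$ positions to carry the value $2$. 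The extremal regime is $A=\emptyset$, i.e. $T_1$ inside the high half, which contributes $(1+\tfrac1{q(1+q)})^{\ell/2}$ and for which the relevant quantity is $q^4(1+q)^2+q^3(1+q)$, equal to $2\omega^{-2}<1$ at $q=\omega^{-1}$ and hence $<1$ for all $q$ up to the positive root $q_0$ of $q^4(1+q)^2+q^3(1+q)=1$, with $q_0>\omega^{-1}$. The work is to turn this into a genuine bound valid for \emph{every} admissible $T_1$, reconciling the two competing regimes — "generic" $T_1$, which gains a factor per orbit but pays $(1+q)^{-p}$ with $p\approx\ell-|T_1|$, versus $T_1$ inside the high half, which pays nothing in $p$ but gains less per orbit; this is an additive-combinatorics estimate on $p(T_1)$. (An alternative, likely the one behind the "spin model / dihedral group" keywords, is to read the composition as a one-dimensional spin configuration, diagonalise the associated transfer operator — the relevant symmetry being the reflection $i\leftrightarrow\ell-i$ — and deduce holomorphy from its top eigenvalue being $<1$ on the required disc.) Once the bound is in place, $\tilde C(q)$ converges for $|q|<q_0$, which proves the claim.
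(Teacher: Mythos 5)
Your reduction of $\tilde c_g$ to the $(T_1,T_3)$ picture is correct, and your arithmetic at $q=\omega^{-1}$ (including the identification of the extremal root $q_0$, which is exactly the positive root $\rho_A=0.659\ldots$ of $1-(q^2+q^3)(q+q^2+q^3)$ appearing in the paper as an \emph{upper} bound on the radius of $\tilde C$) is right. But the proposal has two genuine gaps. First, in scope: the theorem is a statement about $n(g)$, the count of \emph{all} numerical semigroups, and your argument only ever discusses the series $\tilde C$. You never show that NSG-compositions with a part of size $\ge 4$ (equivalently $f\ge 3m$) contribute $o(\omega^g)$ — this is where the bulk of the paper's work lies (weakly admissible compositions for maximum $\ge 6$, and the considerably harder maximum-$5$ and maximum-$4$ cases via pivot factorisation, Schreier graphs of dihedral groups and spin models on lanes), and it cannot be waved away: a priori there are $2^{g-1}$ compositions of $g$, far more than $\omega^g$. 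You also do not derive formula (\ref{formulaforC}): that requires the bijective pivot factorisation of every maximum-$\le 3$ NSG-composition into a (possibly empty) NSG-composition ending in a part $3$ followed by an arbitrary $\{1,2\}$-composition, giving the generating series $(1+\tilde C(q))/(1-q-q^2)$, and then a partial-fraction argument which uses the holomorphy of $\tilde C$ beyond $\omega^{-1}$ to extract the limit.

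Second, even for the one assertion you do attack — the convergence radius of $\tilde C$ being strictly larger than $\omega^{-1}$ — the decisive estimate is exactly the step you leave open. As you yourself compute, the naive per-orbit bound gives $3\omega^{-2}>1$, so everything hinges on a uniform lower bound for $p(T_1)$ valid for \emph{every} admissible $T_1$, an additive-combinatorics statement you do not prove; without it the geometric-series comparison collapses. The paper does not prove such a sharp bound either: it proceeds quite differently, splitting NSG-compositions ending in a maximal part $3$ according to an $\epsilon$-condition on the third-last maximal part, handling the clustered case with the transfer-matrix/lane machinery from the maximum-$4$ analysis (where the relevant transfer matrices evaluated at $\omega^{-1}$ have spectral radius $<1$), and in the remaining case reducing to at most four families of paths of length bounded by $2/\epsilon+1$, where the boundedness is essential because the $2\times 2$ transfer matrix $\begin{pmatrix}0&q\\ q^2+q^3&q^2+q^3\end{pmatrix}$ has eigenvalue $1$ at $q=\omega^{-1}$, so the per-path factors do \emph{not} decay — precisely the phenomenon that obstructs the uniform bound your sketch hopes for. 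So as written the proposal is an interesting reformulation (and its extremal heuristic matches the paper's speculation that $\rho_A$ is the true radius), but it is not a proof of the theorem.
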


\begin{rem} The convergency radius of $\tilde C$ is
  at most equal to the positive real root $0.659982\ldots$
  of $1-q^3-2q^4-2q^5-q^6$, cf. Theorem \ref{thmlowerboundtildeC}. It is thus
  only slightly larger than
  $\omega^{-1}\sim 0.618$. Using this approach for determining a good
  numerical approximation of the constant $C$ involved in
  (\ref{formulaasymptsemigrps}) is thus quite tricky.
\end{rem}

\begin{rem} Formula (\ref{formulaasymptsemigrps}) of Theorem
  \ref{thmupperboundng} is sometimes given with $\omega^g$
  replaced by the $g$-th Fibonacci number. 
  This changes the value
  of the associated constant $C'$  which depends also (up to a power
  of $\omega$) on the
  convention for indices of Fibonacci numbers.
\end{rem}

Throughout the paper we use always $\omega$ for the golden number
$\omega=\frac{1+\sqrt{5}}{2}$ with
multiplicative inverse $\omega^{-1}=\frac{\sqrt{5}-1}{2}$.

The overall structure of our proof of Theorem \ref{thmupperboundng}
and of the proof in \cite{Zhai} are similar:
Numerical semigroups whose Frobenius
numbers are much larger than twice their multiplicity
can be neglected when considering asymptotics.
Zhai proves this by analysing the tree-structure of numerical
semigroups. We use instead compositions with parts defined by
(\ref{defxj}) which we call numerical semigroup compositions 
(called Kunz coordinate vectors or
special cases of so-called Ap\'ery sets by other authors)
or NSG-compositions for short. The linear nature
of compositions makes their study fairly elementary: It is
essentially equivalent to the geometry of closed real intervals,
endowed with partial actions by real reflections.
Transfer matrix techniques and elementary properties of
series expansions for holomorphic functions
complete the proof.

Sections \ref{sectNSGcomp}-\ref{sectmainproof} are devoted to the proof of Theorem
\ref{thmupperboundng}.

Section \ref{sectNSGcomp} introduces numerical semigroup compositions
or NSG-compositions for short.

Section \ref{sectmax2} illustrates the notion of NSG-compositions
by describing all NSG-compositions of maximum at most $2$,
a result also contained in \cite{Zhai}.

Sections \ref{sectgeneralized} and \ref{secttree} are digressions
describing generalised compositions, algorithmic aspects and
the tree-structure of NSG-compositions.

Section \ref{sectoutline} contains a rough outline for the proof of Theorem \ref{thmupperboundng}.

Section \ref{sectpivot} introduces pivot-factorisation, our main tool
for obtaining enumerative results on NSG-compositions.

Section \ref{sectgen} recalls a few facts concerning generating series and
growth-rates.

Section \ref{sectweakly} defines weak admissibility for compositions.
This is used in Section \ref{sectmax6} for obtaining upper bounds on
the growth-rate of NSG-compositions of maximum at least $6$.

NSG-compositions of maximum $5$ and $4$ are treated in Sections
\ref{sectmax5} and \ref{sectmax4}.

Section \ref{sectmax3} describes the Combinatorics of NSG-compositions
of maximum $3$.

Section \ref{sectmainproof} completes the proof of Theorem 
\ref{thmupperboundng} by giving upper bounds on the growth-rate of
NSG-compositions of maximum $3$ ending with a maximal part
enumerated by the series $\tilde C$ occurring in
Formula (\ref{formulafortildeC}).

Sections \ref{sectNSGcomp3}-\ref{sectprobas} outline
some (sometimes only conjectural) combinatorial or probabilistic aspects
of numerical semigroups and certain types of compositions.

\section{Numerical semigroup-compositions}\label{sectNSGcomp}

We start this Section with a justification of our (non-standard)
terminology. Numerical semigroup-compositions are equivalent to
Kunz coordinate vectors, an unfortunate
choice of terminology in our opinion: there is no underlying vector
space and these ``vectors'' encode simply
finite sequences of strictly positive integers
indexed by $1,2,\ldots,m-1$ (representing all non-zero classes of
$\mathbb Z/m\mathbb Z$) summing up to the genus.

Our next result, which is folklore
(see for example \cite{Kaplan}), shows that numerical
semigroups are encoded by their compositions:

\begin{prop}\label{propSdefdbycomp}
A numerical semigroup $S$ associated to a composition 
$x_1+\cdots +x_{m-1}$ with parts defined by formula (\ref{defxj})
is uniquely determined by the formula 
\begin{align}\label{formulaforSbycomp}
S&=\cup_{j=0}^{m-1} (j+mx_j+m\mathbb N)
\end{align}
\end{prop}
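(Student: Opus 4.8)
The plan is to show that within each residue class modulo $m$ the gap-set of $S$ is an initial segment of the corresponding arithmetic progression, of length exactly the coordinate $x_j$; formula (\ref{formulaforSbycomp}) then merely re-expresses this observation class by class, and uniqueness is immediate once one notices that the right-hand side is built only out of $m$ and the numbers $x_j$.

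First I would isolate the one place where the semigroup structure is used: since $m=\min(S\setminus\{0\})$ lies in $S$ and $S$ is closed under addition, we have $s\in S\Rightarrow s+m\in S$ for every $s\in S$. Equivalently, for each $j$ and each integer $k\ge 1$, if $j+(k-1)m\in S$ then $j+km\in S$; contrapositively, if $j+km$ is a gap then so is $j+(k-1)m$ (all integers here being $\ge 0$). Hence for fixed $j\in\{0,1,\dots,m-1\}$ the gap-set $G\cap(j+m\mathbb Z)$ in the class of $j$ is a finite (by the numerical semigroup hypothesis) \emph{initial} segment $\{\,j,\ j+m,\ \dots,\ j+(x_j-1)m\,\}$ of the progression $j,j+m,j+2m,\dots$, of cardinality $x_j$ by the definition (\ref{defxj}); in particular $x_0=0$ since every nonnegative multiple of $m$ belongs to $S$.

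Next I would translate this into a description of $S$ itself. Deleting from the progression $\{\,j+km:k\ge 0\,\}$ its first $x_j$ terms leaves exactly $\{\,j+km:k\ge x_j\,\}=j+mx_j+m\mathbb N$, and this set is precisely $S\cap(j+m\mathbb Z_{\ge 0})$. Taking the disjoint union over the $m$ residue classes $j=0,\dots,m-1$ gives $S=\bigcup_{j=0}^{m-1}(j+mx_j+m\mathbb N)$, which is (\ref{formulaforSbycomp}); for $j=0$ this contributes $m\mathbb N$, consistently with $x_0=0$.

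Finally, for the uniqueness clause: the composition $x_1+\cdots+x_{m-1}$ determines its number of parts $m-1$, hence $m$, together with all the values $x_j$, so the right-hand side of (\ref{formulaforSbycomp}) depends only on the composition. Thus any numerical semigroup with the same composition as $S$ equals that right-hand side, hence equals $S$. I do not expect a genuine obstacle here; the only substantive point is the ``downward closure within a residue class'' established above, which deserves to be stated separately since it is the sole use made of $S+S\subseteq S$ (beyond $m\in S$). As a side check one may note that $x_j\ge 1$ for $j\neq 0$, since $x_j=0$ would put $j\in S$ with $0<j<m$, contradicting minimality of $m$.
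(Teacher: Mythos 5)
Your proof is correct and follows essentially the same route as the paper's: use $m\in S$ (via closure under addition) to see that the gaps in each residue class $j+m\mathbb Z$ form an initial segment of size $x_j$, so $S\cap(j+m\mathbb Z)=j+mx_j+m\mathbb N$, and then observe that the composition determines $m$ (number of parts plus one) and all the $x_j$, hence $S$. No issues.
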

using the convention $x_0=0$.

A \emph{numerical semigroup composition} (or an 
\emph{NSG-composition}, for short) is a composition 
$x_1+\cdots+x_{m-1}$ (with an omitted trivial part $x_0=0$)
defining a numerical semigroup
by (\ref{formulaforSbycomp}). We identify henceforth numerical
semigroups with their NSG-compositions.

\begin{proof}[Proof of Proposition \ref{propSdefdbycomp}] 
A composition $x_1+\cdots+x_{m-1}$ associated to a numerical 
semigroup $S$ determines
the multiplicity $m$ of $S$ (by adding $1$ to the number $m-1$
of its summands). Every numerical semigroup associated to 
$x_1+\cdots+x_{m-1}$ has thus the same multiplicity $m$.
Since $m$ is an element of $S$, the intersection of $S$
with an arithmetic progression $j+m\mathbb N$ (for $j$ in $\{
1,\ldots,m-1\}$) lacks consecutive initial values of the arithmetic
progression $j+m\mathbb N$. This shows that $S\cap(j+m\mathbb Z)$ is given
by $j+x_jm+m\mathbb N$ for every $j\in\{1,\ldots,m-1\}$.
Since $m\in S$ implies $S\cap m\mathbb N=m\mathbb N$,
the composition $x_1+\cdots+x_{m-1}$ defines the intersection
of $S$ with $j+m\mathbb Z$ for all $j$. This determines
$S$ uniquely.
\end{proof}

\begin{cor} There are at most $2^{g-1}$ numerical semigroups of genus 
$g$.
\end{cor}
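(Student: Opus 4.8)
The plan is to use Proposition \ref{propSdefdbycomp} to turn the count of semigroups into a count of compositions. First I would record that, for a numerical semigroup $S$ of genus $g\geq 1$, the trivial identity $g=x_1+\cdots+x_{m-1}$ together with the inequalities $x_1,\ldots,x_{m-1}\geq 1$ noted just after (\ref{defxj}) show that the NSG-composition of $S$ is an honest composition of $g$ into $m-1$ positive parts. Moreover $m\geq 2$ (since $g\geq 1$ forces $S\neq\mathbb N$), so it has at least one part, and since each part is at least $1$ it has at most $g$ parts. Thus $S$ gives rise to a composition of $g$ into some number of parts between $1$ and $g$, and the number of parts (hence $m$) is recovered from the composition.

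Next I would invoke the uniqueness part of Proposition \ref{propSdefdbycomp}: the composition $x_1+\cdots+x_{m-1}$ determines $S$ via (\ref{formulaforSbycomp}). Hence the assignment $S\mapsto(x_1,\ldots,x_{m-1})$ is an injection from the set of numerical semigroups of genus $g$ into the set of all compositions of $g$. It then remains to count the latter: the number of compositions of $g$ into exactly $k$ parts is $\binom{g-1}{k-1}$ (insert $k-1$ bars into the $g-1$ gaps between $g$ aligned units), so the total number of compositions of $g$ is $\sum_{k=1}^{g}\binom{g-1}{k-1}=\sum_{j=0}^{g-1}\binom{g-1}{j}=2^{g-1}$. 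Combining this with injectivity yields $n(g)\leq 2^{g-1}$.

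There is essentially no obstacle here: all the substance sits in Proposition \ref{propSdefdbycomp}, which has already been established, and what remains is the elementary binomial identity $\sum_{j\geq 0}\binom{g-1}{j}=2^{g-1}$. The one point worth stating carefully is that the number of parts $m-1$ must be allowed to range over all admissible values $1,\ldots,g$ rather than being fixed, because the multiplicity $m$ is not determined by $g$; letting the codomain be the set of \emph{all} compositions of $g$ handles this cleanly.
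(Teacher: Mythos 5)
Your argument is correct and follows essentially the same route as the paper: both reduce the bound to the injection $S\mapsto x_1+\cdots+x_{m-1}$ furnished by Proposition \ref{propSdefdbycomp} together with the fact that a positive integer $g$ has exactly $2^{g-1}$ compositions. The only (inessential) difference is how that count is obtained — you sum the stars-and-bars counts $\binom{g-1}{k-1}$ over the number of parts, while the paper's footnote uses a bijection with binary words (or a generating-series identity); your explicit remark that the number of parts $m-1$ ranges over $1,\ldots,g$ is a welcome bit of care.
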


\begin{proof} A strictly positive integer $g>0$
has $2^{g-1}$ compositions\footnote{Write the word $1^g$ 
consisting of $g$ identical letters
$1$. Starting with the second occurrence of $1$, decide for each letter $1$ 
if you add it to the precedent letter or if you do nothing. These $2^{g-1}$
possible choices produce all possible sequences of strictly positive 
integers adding up to $g$. Equivalently, write all 
$2^{g-1}$ words of $0\{0,1\}^{g-1}$ of length $g$, with letters in $\{0,1\}$
and first letter $0$: 
Consider lengths of factors for such a word  
after rewriting it in terms of $0,01,011,
0111,\ldots$.

For a proof with generating series observe that $(q/(1-q))^n$ counts
the number of compositions with exactly $n$ parts. We get the result
by the identities
$\sum_{n=1}^\infty (q/(1-q))^n=(q/(1-q))\circ(q/(1-q))=q/(1-2q)$
for the generating series enumerating all compositions of
strictly positive integers.}.
\end{proof}
 
The next result is well-known (see for example Proposition 9 in
\cite{Kaplan})
and describes the set of all NSG-compositions:
\begin{thm}\label{thmsemigrcomp}
A composition $x_1+\cdots +x_{m-1}$ is the composition
of a numerical semigroup of multiplicity $m$ if and only if 
we have the inequalities 
\begin{align}\label{fundeqsg}
\begin{array}{rl}x_{s+t}&\leq x_s+x_t,\\
x_{m-s-t}&\leq x_{m-s}+x_{m-t}+1
\end{array}
\end{align}
for all $s,t$ in $\{1,\ldots,m-2\}$ such that $s+t<m$.
\end{thm}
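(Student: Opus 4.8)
The plan is to recast everything in terms of the Ap\'ery elements $w_j=j+mx_j$ (with $w_0=0$), so that the set $S$ of formula (\ref{formulaforSbycomp}) becomes $S=\bigcup_{j=0}^{m-1}\bigl(w_j+m\mathbb N\bigr)$, and then to isolate one clean equivalence: the inequalities (\ref{fundeqsg}) hold if and only if $w_i+w_j\ge w_{(i+j)\bmod m}$ for all $i,j\in\{0,1,\dots,m-1\}$. First I would note that, whatever the parts $x_j\ge 1$ are, the right-hand side of (\ref{formulaforSbycomp}) always contains $0$ (the term $j=0$ is $m\mathbb N$) and has finite complement $\bigcup_{j=1}^{m-1}\{\,j,\,j+m,\,\dots,\,j+m(x_j-1)\,\}$ of cardinality $\sum_{j=1}^{m-1}x_j$; since $x_j\ge 1$ forces $j\notin S$ for $1\le j\le m-1$ while $m\in m\mathbb N\subseteq S$, the multiplicity of $S$ is exactly $m$, and re-reading definition (\ref{defxj}) recovers the parts $x_1,\dots,x_{m-1}$. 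Hence $S$ is a numerical semigroup of multiplicity $m$ with composition $x_1+\cdots+x_{m-1}$ exactly when $S+S\subseteq S$; and since $S+S=\bigcup_{i,j}\bigl(w_i+w_j+m\mathbb N\bigr)$ with $w_i+w_j\equiv i+j\pmod m$, closure amounts precisely to $w_i+w_j\ge w_{(i+j)\bmod m}$ for all $i,j$ (one need only test pairs of Ap\'ery elements, using $m\mathbb N+m\mathbb N=m\mathbb N$).

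The remaining, purely combinatorial step is to match these inequalities with (\ref{fundeqsg}). If $i=0$ or $j=0$ the inequality is a trivial equality; if $i,j\ge 1$ and $i+j=m$ the right-hand side is $w_0=0$ and it holds automatically. If $i,j\ge 1$ and $i+j<m$, then $i,j\le m-2$, and inserting $w_k=k+mx_k$ and cancelling $i+j$ turns $w_i+w_j\ge w_{i+j}$ into $x_{i+j}\le x_i+x_j$; letting $(s,t)=(i,j)$ run over all pairs in $\{1,\dots,m-2\}^2$ with $s+t<m$ yields exactly the first line of (\ref{fundeqsg}). If $i,j\ge 1$ and $i+j>m$ (which forces $i,j\ge 2$), the inequality $w_i+w_j\ge w_{i+j-m}$ becomes $x_{i+j-m}\le x_i+x_j+1$, and the substitution $s=m-i$, $t=m-j$ rewrites this as the second line of (\ref{fundeqsg}); since $(i,j)\mapsto(m-i,m-j)$ is a bijection from $\{(i,j):i,j\in\{2,\dots,m-1\},\ i+j>m\}$ onto $\{(s,t):s,t\in\{1,\dots,m-2\},\ s+t<m\}$, one obtains exactly the second family. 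This establishes the claimed equivalence, hence the theorem.

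For the ``only if'' half one can also argue directly, re-using the computation in the proof of Proposition \ref{propSdefdbycomp}: a numerical semigroup of multiplicity $m$ contains $m$, so $S\cap(j+m\mathbb Z)=j+mx_j+m\mathbb N$ with Ap\'ery element $w_j=j+mx_j$, and $S+S\subseteq S$ forces $w_i+w_j\ge w_{(i+j)\bmod m}$, which unwinds to (\ref{fundeqsg}) as above. The only delicate point in the whole argument is the bookkeeping in the second paragraph: one must check that $s=m-i,\ t=m-j$ is genuinely a bijection onto the index set of (\ref{fundeqsg}), and that the discarded cases ($i=0$, $j=0$, $i+j=m$) carry no hidden constraint. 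Everything else reduces to elementary arithmetic with the identity $w_k=k+mx_k$.
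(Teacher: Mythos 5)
Your proof is correct and follows essentially the same route as the paper: both construct $S$ from the composition via (\ref{formulaforSbycomp}) and check closure under addition residue class by residue class modulo $m$, your only (cosmetic) variation being to reduce the closure check to sums of the minimal representatives $w_j=j+mx_j$ rather than arbitrary elements $w_j+\alpha m$. The extra care you take with the multiplicity being exactly $m$ and with the index bijection $(i,j)\mapsto(m-i,m-j)$ is sound and merely makes explicit what the paper leaves implicit.
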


The inequalities given by (\ref{fundeqsg}) are henceforth called
\emph{NSG-inequalities}.

\begin{proof}[Proof of Theorem \ref{thmsemigrcomp}]
Let $x_1+\ldots+x_{m-1}$ be a composition to which we add 
a trivial part $x_0=0$.
We define a subset $S$ of $\mathbb N$ by setting
$$S=\bigcup_{j=0}^{m-1}(j+x_jm+m\mathbb N)$$
(cf. formula (\ref{formulaforSbycomp}) of Proposition \ref{propSdefdbycomp}).
We have to show that $S$ is a numerical semigroup if and only if
all NSG-inequalities (\ref{fundeqsg}) hold: 
Given two elements $a$ and $b$ of $S$ we
consider their representatives $s,t$ modulo $m$ in $\{0,\ldots,m-1\}$.
By construction of $S$, there exist two natural integers 
$\alpha$ and $\beta$ such that $a=s+x_sm+\alpha m$ and $b=
t+x_tm+\beta m$. We have thus $c=a+b=s+t+(x_s+x_t)m+(\alpha+\beta)m$.

If $s+t<m$ we have $u=s+t$ in $\{1,\ldots,m-1\}$
representing $s+t$ modulo $m$. We get thus 
\begin{align*}
c&=s+t+(x_s+x_t)m+(\alpha+\beta)m\\
&=u+x_um+(\alpha+\beta+x_s+x_t-x_u)m
\end{align*}
and $\alpha+\beta+x_s+x_t-x_u$ is always a natural integer if 
and only if all inequalities of the first line in (\ref{fundeqsg})
hold.

If $s+t\geq m$, we get $u=s+t-m$ for $u$ in $\{0,\ldots,m-1\}$ 
and we have 
\begin{align*}
c&=(s+t-m)+(x_s+x_t+1)m+(\alpha+\beta)m\\
&=u+x_um+(\alpha+\beta+x_s+x_t+1-x_u)m
\end{align*}
and $\alpha+\beta+x_s+x_t+1-x_u$ is always in $\mathbb N$
if and only if we have the inequalities of the second line in 
(\ref{fundeqsg}).
\end{proof}

\begin{rem} The NSG-inequalities (\ref{fundeqsg}) can be
  rewritten as
  $$x_i+x_j\geq x_{i+j\pmod m}+c(i,j)$$
  where $$c(i,j)=\left\lbrace\begin{array}{ll}
      0\quad&\hbox{if }i+j<m,\\
      1&\hbox{otherwise}
    \end{array}\right.$$
  is the $2$-cocycle of $H^2(\mathbb Z/m\mathbb Z,\mathbb Z)$
  corresponding to the exact sequence
$$0\longrightarrow \mathbb Z\longrightarrow \mathbb Z
\longrightarrow \mathbb Z/m\mathbb Z\longrightarrow 0
$$
defining $\mathbb Z$ as a central extension of $\mathbb Z/m\mathbb Z$ by
$\mathbb Z$. This observation can of course be explained
by Remark \ref{remgroupalgebra}.
\end{rem}

\subsection{Parameters}

We describe without proofs how to recover
basic parameters of a numerical semigroup $S
=m\mathbb N\bigcup\left(\cup_{j=1}^{m-1}\lbrace j+m(x_j+
  \mathbb N)\rbrace\right)$
from its NSG-composition $x_1+\ldots +x_{m-1}$.

The multiplicity $m$ of $S$ is easily obtained from the
number $m-1$ of parts of $x_1+\ldots+x_{m-1}$. Similarly, 
the genus is the sum $\sum_{i=1}^{m-1}x_i$ of all parts.

Minimal generators for $S$ are given by the multiplicity $m$ and by integers
$j+x_jm,\ j\in \{1,\ldots,m-1\}$
such that all NSG-inequalities (\ref{fundeqsg}) are strict 
for $s,t\in\{1,\ldots,m-1\}$ with $s+t$ in $\{j,j+m\}$.

The Frobenius number $f=\max(\mathbb N\setminus S)$
of a numerical semigroup with composition $x_1+\ldots+x_{m-1}$
is given by
\begin{align}\label{formulafrobenius}
f&=\max(x_1,\ldots,x_{m-1})m-m+\max(\{j\ \vert\ x_j=\max(x_1\ldots,x_{m-1})\})
\end{align}
or equivalently by $m(x_l-1)+l$ where $x_1,\ldots,x_{l-1}\leq x_l>x_{l+1},
\ldots,x_{m-1}$ (i.e. $x_l$ is the last summand of maximal value).
We have the inequalities
\begin{align*}
m(\max(x_1,\ldots,x_{m-1})-1)<f<m\max(x_1,\ldots,x_{m-1})
\end{align*}
and the equality $\max(x_1,\ldots,x_{m-1})=\lceil f/m\rceil$.

\section{NSG-compositions with maximum $2$}
\label{sectmax2}

Results of this Section are well-known, see for example \cite{Zhai}.

\begin{prop}\label{propparts12} All compositions with parts
in $\{1,2\}$ are NSG-compositions. 

The generating function for the number of compositions 
$x_1+\cdots+x_{m-1}$ with genus $g$ and all parts $x_j$ in $\{1,2\}$
is the generating function
$$\sum_{n=0}^\infty F_gq^g=\frac{1}{1-q-q^2}$$
of Fibonacci numbers
$F_g=\frac{5+\sqrt{5}}{10}\left(\frac{1+\sqrt{5}}{10}\right)^g+
\frac{5-\sqrt{5}}{2}\left(\frac{1-\sqrt{5}}{2}\right)^g$
defined recursively by $F_0=1,F_1=1,F_2=2,\ldots,F_n=F_{n-1}+F_{n-2},\dots$.
\end{prop}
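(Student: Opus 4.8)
The plan is to prove the two assertions separately: first that every composition with parts in $\{1,2\}$ satisfies the NSG-inequalities (\ref{fundeqsg}), and second that such compositions are counted by the Fibonacci generating function. For the first part, I would invoke Theorem \ref{thmsemigrcomp} and simply check the inequalities directly. If all $x_i\in\{1,2\}$, then for the first inequality $x_{s+t}\leq x_s+x_t$ we have $x_{s+t}\leq 2$ while $x_s+x_t\geq 1+1=2$, so it always holds. For the second inequality $x_{m-s-t}\leq x_{m-s}+x_{m-t}+1$ we have $x_{m-s-t}\leq 2$ while $x_{m-s}+x_{m-t}+1\geq 1+1+1=3$, so this always holds as well. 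Hence every such composition is an NSG-composition, with no constraint whatsoever, which is the cleanest possible situation.

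For the enumeration, I would set up the generating function by parts: a composition with all parts in $\{1,2\}$ and total sum (genus) $g$ is exactly a tiling of a length-$g$ strip by monominoes and dominoes, and the number of parts $m-1$ is automatically recovered (so we need not track it). The generating function for a single part is $q+q^2$, and allowing any number $\geq 0$ of parts gives $\sum_{k\geq 0}(q+q^2)^k = \frac{1}{1-q-q^2}$, where the $k=0$ term accounts for $g=0$ (the empty composition, corresponding to $m=1$, i.e.\ the semigroup $\mathbb N$ itself). This matches the stated series $\sum_g F_g q^g$ with the indicated convention $F_0=F_1=1$, $F_2=2$, and the partial-fraction decomposition of $\frac{1}{1-q-q^2}$ over the roots $\frac{-1\pm\sqrt5}{2}$ of $1-q-q^2$ yields the closed form for $F_g$ (modulo fixing the normalization constants by the initial values, which I would not belabor).

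I do not expect a serious obstacle here; the only mild subtlety is bookkeeping about whether $g=0$ (equivalently $m=1$) is included, and getting the constants in the Binet-type formula to match the stated (slightly unusual) indexing convention for $F_g$. I would handle this by pinning down $F_0$, $F_1$ from the series expansion and noting the recursion $F_n=F_{n-1}+F_{n-2}$ follows immediately from $(1-q-q^2)\sum_g F_gq^g=1$.
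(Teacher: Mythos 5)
Your proposal is correct and follows essentially the same route as the paper: the NSG-inequalities (\ref{fundeqsg}) hold trivially for parts in $\{1,2\}$ (you just spell out the $2\leq x_s+x_t$ and $2\leq x_{m-s}+x_{m-t}+1$ bounds that the paper calls obvious), and the enumeration is the same geometric series $\sum_{n\geq 0}(q+q^2)^n=1/(1-q-q^2)$. Your extra remarks on the $g=0$ term and the Binet-type constants are harmless bookkeeping the paper leaves implicit.
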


\begin{proof}
Compositions with all parts $x_j$ in $\{1,2\}$
satisfy obviously all NSG-inequalities (\ref{fundeqsg}) 
of Theorem \ref{thmsemigrcomp} and are thus NSG-compositions. 

The generating series for compositions with all parts in $\{1,2\}$
is given by $\sum_{n=0}^\infty(q+q^2)^n=1/(1-(q+q^2))$.
\end{proof}

\begin{rem} Two other proofs for the generating series
  enumerating compositions with all parts in $\{1,2\}$ are as follows:

There is a unique such composition of genus $0$ or $1$.
Every such composition of genus $\geq 2$ is obtained by adding a final
part $1$ to such a composition of genus $g-1$ or by adding a final part
$2$ to such a composition of genus $g-2$. Numbers of such compositions
satisfy thus the initial conditions and recurrence relations
of Fibonacci-numbers.

A composition of genus $g=\sum_{j=1}^{m-1} x_j$ with all parts in $\{1,2\}$
and with multiplicity $m=g-k$ (for $k$ in $\{0,\ldots,\lfloor g/2\rfloor\}$)
has $k$ parts equal to $2$ and $g-2k$ parts equal to 
$1$. There are thus $\sum_{k\geq 0}{g-k\choose k}$ such compositions.
The easy identity
$$\sum_{k=0}^{\lfloor g/2\rfloor}{g-k\choose k}=F_g$$
for $F_0=F_1=1$ and $F_n=F_{n-2}+F_{n-1}$ the sequence of Fibonacci numbers
ends the proof.
\end{rem}


\begin{rem} A bijection between
$\{0,1,\ldots,F_g-2,F_g-1\}$ and compositions of $g$ with parts in
$\{1,2\}$ can be constructed as follows:
The Zeckendorff expansion  
$\epsilon_g\epsilon_{g-1}\ldots \epsilon_2\epsilon_1$ of length $g$
(defined by $n=\sum_{j=1}^g\epsilon_j F_j$
with $\epsilon_j\in\{0,1\},\ \epsilon_{j+1}\epsilon_j=0$ for
all $j$) of an integer $n<F_g$
starts with $\epsilon_g=0$ and contains only isolated digits $1$.
Rewriting it in terms of $0$ and $01$ and considering 
lengths of factors yields the composition
of $g$ associated to $n$.
\end{rem}

We end this Section with a digression on the following
well-known properties (not directly related to the
topic of the paper) of compositions:
\begin{rem}
Compositions of $n$ having only parts $\leq 2$ are equinumerous 
with compositions of all integers up to $n$ having only parts $\geq 2$:
Given a composition $x_1+\ldots+x_k$ of $n$ with $x_j\in\{1,2\}$,
remove the final block (which is empty if $x_k=2$)
of consecutive parts $1$, factor the resulting word 
$x_1x_2\ldots x_l$ with factors in $\{1\}^*2$ and replace
$1^k2$ by $k+2$. The resulting word corresponds to a composition of an integer
$\leq n$ with summands $\geq 2$.

Equivalently, compositions of $n$ with all parts in $\{1,2\}$ correspond
to compositions of $n+2$ with all parts $\geq 2$. (Add a final part $2$
before applying the above algorithm.)

Combining the two previous bijections yields a bijection between
all $F_n$ compositions with parts $\geq 2$ of integers up to $n$
and all $F_n$ compositions with parts $\geq 2$ of the integer $n+2$.

Subtracting $1$ from the first part of partitions with all parts $\geq 2$
shows  finally that there are also $F_n$
partitions of $n+1$ with arbitrary first part
and all other parts $\geq 2$.
Such partitions encode Zeckendorff expansions of integers in $\{F_{n+1},\ldots,
F_{n+2}-1\}$ by associating to $x_1+\ldots +x_k$
the word $\epsilon_1\epsilon_2\ldots \epsilon_{n+1}$
(ending with $\epsilon_{n+1}=1$) obtained by replacing $x_k$ by $0^{k-1}1$.
The composition $1+3+2$ for example corresponds to $\epsilon_1\ldots
\epsilon_6=1\vert 001\vert 01$ encoding the integer
$\sum_{j=1}^6\epsilon_jF_j=
F_1+F_4+F_6=1+5+13=19$.
\end{rem}

\section{Generalised compositions}\label{sectgeneralized}

Let $\mathcal N\not\subset\{0\}$ be a non-trivial submonoid of
the additive monoid $(\mathbb N,+)$. 
We associate to a strictly positive element $M$ of $\mathcal N$ 
the cardinals $x_0,x_1,\ldots x_{M-1}$ in $\mathbb N\cup\{\infty\}$
counting the numbers
$$x_j=\sharp\left((j+M\mathbb N)\setminus \mathcal N\right)
\in\mathbb N\cup \{\infty\},\ j=0,\ldots,M-1$$
of elements in the complement (gap-set) $\mathbb N\setminus\mathcal N$
intersecting congruence classes modulo $M$. 

This defines a generalised composition
$x_0+x_1+\cdots+x_{M-1}$ with parts in $\mathbb N\cup\{\infty\}$ and
(perhaps infinite) sum $\sum_{j=0}^{M-1}x_j=\sharp\{\mathbb N\setminus
\mathcal N\}$ counting
the number of elements in the gap-set $\mathbb N\setminus \mathcal N$
of $\mathcal N$.

$\mathcal N$ is a semigroup (i.e. $\mathcal N$ contains $
0$) if and only if $x_0=0$. We work henceforth only with semigroups 
and omit the trivial summand $x_0=0$.

A semigroup $\mathcal N$ is a numerical semigroup if and only if
all parts $x_1,\ldots,x_{M-1}$ are natural integers.

The occurrence of infinite parts among $x_1,\ldots,x_{M-1}$
is equivalent to the existence of a divisor $d>1$ of $M$ such that
$\mathcal N=d\mathcal N'$ with $\mathcal N'$ a numerical semigroup.
We have then $x_j<\infty$ if and only if $d$ divides $j$.

All parts $x_1,\ldots,x_{M-1}$ are 
strictly positive (with $\infty$ being strictly positive by convention)
if and only if $M$ is the minimal non-zero element
$\min(\mathcal N\setminus\{0\})$ of $\mathcal N$.

The generalised composition $x'_1+\cdots+x'_{M'-1}$ associated to 
another non-zero element $M'$ of $\mathcal N$ is defined 
by considering the smallest natural integer $x'_j$ 
such that there exists $i\in\{0,\ldots,M-1\}$ with $x'_jM'+j=yM+i$
for $y\geq x_i$. We set $x'_j=\infty$ if $\mathcal N$ contains 
no elements congruent to $j$ modulo $M'$.
(It is also possible to compute $x'_1,\ldots,x'_{M'}$ using the 
algorithm sketched in Remark \ref{remalgo} below with generators
$\{M,x_1M+1,\ldots,x_{M-1}M+M-1\}\cap \mathbb N$.) 

Theorem \ref{thmsemigrcomp} holds (except for the assertion concerning 
the multiplicity which is at most equal to $M$) for generalised compositions
after extending the NSG-inequalities (\ref{fundeqsg}) to
$\mathbb N\cup\{\infty\}$ by considering $\infty$ as a maximal
element.

\begin{rem}\label{remalgo}
Generalised compositions have interesting algorithmic aspects.
They can easily be computed if $\mathcal N=\sum_{g\in \mathcal G}\mathbb N g$ 
(only finite sums are considered if $\mathcal G$ is infinite)
is defined in terms of a set $\mathcal G\subset \mathbb N\setminus\{0\}$ 
of non-zero generators: 

Choose an element $M$ in $\mathcal G$
(the choice $M=\min(\mathcal G)$ is optimal).

For $j=1,\ldots,M-1$, set
$x_j=\infty$ if $\mathcal G\cap \left(j+M\mathbb Z\right)=\emptyset$ and 
  $$x_j=\min_{g\in \mathcal G,\ g\equiv j\pmod M}\frac{g-j}{M}$$
otherwise.

Iterate the following loop until stabilisation:
For $j=1,\ldots,M-1$ replace $x_j$ by
$$\min\left(\{x_j\}\cup\left(\bigcup_{i=1}^{\lfloor j/2\rfloor}\{x_i+x_{j-i}\}
\right)\cup\left(\bigcup_{i=1}^{\lfloor (m-j)/2\rfloor}\{1+x_{j+i}+x_{m-i}\}
\right)\right)\ .$$
\end{rem}

\section{The tree of numerical semigroups}\label{secttree}

Numerical semigroups have a natural tree-structure:
Adding the Frobenius number 
$f=\max(\mathbb N\setminus S)$ 
to a numerical semigroup $S$ of strictly positive genus $g$ yields a 
numerical semigroup $S\cup \{f\}$ of genus $g-1$.

We discuss without proofs in this somewhat
informal and colloquial Section
how to recover the tree-structure from NSG-compositions.
The content of this Section will not be explicitly used in the sequel.
(The tree-structure is however implicitly used when discussing
NSG-compositions with maximal parts of size $3,4$ or $5$.)

The predecessor of a non-trivial NSG-composition $x_1+\cdots +x_{m-1}$ is 
given by $x_1+\cdots +x_{l-1}+(x_l-1)+x_{l+1}+\cdots+x_{m-1}$
if $x_l=\max(x_1,\ldots,x_{m-1})$ is the last maximal part
defining the Frobenius number $f=l+m(x_l-1)$.
A trailing part equal to zero is of course suppressed: 
The predecessor of the composition $g=1+1+\cdots+1+1$ with 
Frobenius number $g$ is given by the composition $1+1+\cdots+1$
of $g-1$.

Children (immediate successors) of the NSG-composition
$1+1+\cdots+1$ (consisting of 
$g$ parts $x_j=1$) are given either by adding an additional part
$x_{g+1}=1$ or by replacing any part $x_j=1$ by $x_j=2$.

Children of a NSG-composition $x_1+\cdots+x_{m-1}$ 
with maximal parts of size
$\max(x_1,\ldots,x_{m-1})\geq 2$ are given as follows:
Let $f=l+m(x_l-1)$ be the Frobenius number associated to 
$x_1+\ldots+x_{m-1}$. Children of $x_1+\cdots+x_{m-1}$ are
given by NSG-compositions
\begin{align}\label{formchild}
&x_1+\cdots+x_{i-1}+(x_i+1)+x_{i+1}+\cdots+x_{m-1}
\end{align}
such that $x_i=x_l$ if $i\leq l$, respectively
$x_i=x_l-1$ if $i>l$. Observe however that compositions
of the form (\ref{formchild}) (with $x_i=x_l$ if $i\leq l$,
respectively, $x_i=x_l-1$ otherwise) do not necessarily satisfy
NSG-inequalities (\ref{fundeqsg}) for indices $s,t$
with $s+t\in\{i,i+m\}$.

The set of all children of a NSG-composition $x_1+\cdots +x_{m-1}$
with maximum at least $2$ corresponds thus to a (perhaps empty)
subset $\mathcal C$ of $\{1,\ldots,m-1\}$ with an element $i$
of $\mathcal C$ defining a child by formula (\ref{formchild}).

Descendants of the NSG-composition $1+1+\dots+1$ with $m=g+1$ are
NSG-compositions of multiplicity at least $m$.

The set of all descendants of a NSG-composition $x_1+\ldots+x_{m-1}$
with $\max(x_1,\ldots,x_{m-1})>1$ can be constructed 
as follows: 
Consider the generalised composition
$\tilde z_1+\cdots+\tilde z_{m-1}$ defined by $\tilde z_i=\infty$ if $i$ belongs to
the set $\mathcal C$ indexing children of $x_1+\cdots+x_{m-1}$
and $\tilde z_i=x_i$ if $i\not\in \mathcal C$.
Applying the NSG-algorithm of Remark \ref{remalgo}
to $\tilde z_1+\cdots +\tilde z_{m-1}$
yields a generalised composition $z_1+\cdots+z_{m-1}$
encoding a smallest semigroup (missing perhaps infinitely
many elements of $\mathbb N$) contained in all
descendants of $x_1+\cdots+x_{m-1}$.
Descendants are encoded by NSG-compositions $y_1+\cdots+y_{m-1}$
with $x_i\leq y_i\leq z_i$. (A composition $y_1+\cdots+y_{m-1}$
with $x_i\leq y_i\leq z_i,\ i=1,\ldots,m-1$ is however not
necessarily a NSG-composition.) Observe that we have $z_i=x_i$ if
$i$ is not in $\mathcal C$. Observe also that the number of
descendants is finite if and only if
$\{1,\ldots,m-1\}\setminus\mathcal C$ generates
$\mathbb Z/m\mathbb Z$.

\subsection{A combinatorial over-tree for successors}

We have seen that descendants of a NSG-composition $x_1+\cdots+
x_{m-1}$ with maximum $\max(x_1,\ldots,x_{m-1})$ at least $2$
correspond to all NSG-compositions $y_1+\cdots+y_{m-1}$
such that $x_i\leq y_i\leq z_i$ where $z_1+\cdots +z_{m-1}$
is a generalised NSG-composition with parts in $\{1,2,\ldots\}\cup\{\infty\}$
defined in terms of $x_1+\cdots+x_{m-1}$. We have $z_i=x_i$ except for $i$
belonging to the set $\mathcal C$ indexing all children of $x_1+\cdots+x_{m-1}$.

We define a sequence
\begin{align}\label{defD}
  \mathcal D&=(z_{i_1}-x_{i_1},z_{i_2}-x_{i_2},\ldots,
              z_{i_k}-x_{i_k})\in \left(\{1,2,\ldots\}\cup\{\infty\}\right)^{\mathcal C}
\end{align}
where the sequence of indices $i_1,\ldots,i_k$ corresponds to all
elements of $\mathcal C$
ordered by $i_a<i_b$ if either $x_{i_a}<x_{i_b}$ or
if $x_{i_a}=x_{i_b}$ and $i_a<i_b$. (The index $i_k$ of the last
element of $\mathcal D$ corresponds thus to the Frobenius number
$f=(x_{i_k}-1)m+i_k$.)

We associate to a sequence $(n_1,\ldots,n_k)\in \left(\{1,2,\ldots,\}\cup
  \{\infty\}\right)^k$ recursively a decorated rooted plane tree
$T(n_1,\ldots,n_k)$ as follows:
The root is decorated by the sequence $(n_1,\ldots,n_k)$.
It has $k$ children defined as the roots of the trees given by
$(n_{i+1},n_{i+2},\ldots,n_k,n_i-1)$ for $i=1,\ldots,k$ with the last
coordinate $n_i-1$ missing if $n_i=1$. Leaves of $T$ are
associated to empty sequences.

A vertex labelled $(5,1,3,1,1,3)$ for example has six children given by $(1,3,1,1,3,4),(3,1,1,3),(1,1,3,2),(1,3),(3)$ and $(2)$.

\begin{prop} The tree $T(n_1,\ldots,n_k)$
  has $\prod_{i=1}^k(n_i+1)$ vertices.
\end{prop}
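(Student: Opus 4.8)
The plan is to prove the claim by induction on the length $k$ of the decorating sequence $(n_1,\ldots,n_k)$, using the recursive definition of $T(n_1,\ldots,n_k)$ directly. Write $V(n_1,\ldots,n_k)$ for the number of vertices of $T(n_1,\ldots,n_k)$; the goal is $V(n_1,\ldots,n_k)=\prod_{i=1}^k(n_i+1)$. The base case $k=0$ is the empty sequence, whose tree is a single leaf, so $V(\,)=1$, matching the empty product. For the inductive step I would use that the root contributes one vertex and its subtrees are exactly the $T(n_{i+1},\ldots,n_k,n_i-1)$ for $i=1,\ldots,k$, with the caveat that when $n_i=1$ the last coordinate $n_i-1=0$ is dropped, so that particular child is the tree of the sequence $(n_{i+1},\ldots,n_k)$ of length $k-1$. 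In either case the child sequence has length $<k$ (it is $k$ when $n_i>1$, $k-1$ when $n_i=1$), so the induction hypothesis applies.

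The key computation is then: for the $i$-th child, by the induction hypothesis its vertex count is
\begin{align*}
V(n_{i+1},\ldots,n_k,n_i-1)&=(n_i-1+1)\prod_{j=i+1}^k(n_j+1)=n_i\prod_{j=i+1}^k(n_j+1)
\end{align*}
when $n_i\geq 2$, and this same formula $n_i\prod_{j=i+1}^k(n_j+1)$ also holds when $n_i=1$ because then the dropped-coordinate tree has vertex count $\prod_{j=i+1}^k(n_j+1)=1\cdot\prod_{j=i+1}^k(n_j+1)=n_i\prod_{j=i+1}^k(n_j+1)$. So the formula for the child count is uniform in whether $n_i=1$ or not, which is the small point that makes the bookkeeping clean. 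Summing over all children and adding $1$ for the root gives
\begin{align*}
V(n_1,\ldots,n_k)&=1+\sum_{i=1}^k n_i\prod_{j=i+1}^k(n_j+1).
\end{align*}

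It then remains to verify the elementary identity $1+\sum_{i=1}^k n_i\prod_{j=i+1}^k(n_j+1)=\prod_{i=1}^k(n_i+1)$. This is a telescoping: writing $P_i=\prod_{j=i+1}^k(n_j+1)$ with $P_k=1$, one has $n_iP_i=(n_i+1)P_i-P_i=P_{i-1}-P_i$, so $\sum_{i=1}^k n_iP_i=P_0-P_k=\prod_{j=1}^k(n_j+1)-1$, and adding $1$ yields the claim. (If one prefers to avoid even this, a second, nested induction on $k$ treating $n_1$ as the outermost factor works equally well.) Finally, I should note the case where some $n_i=\infty$: then $n_i+1=\infty$, the tree $T$ is genuinely infinite, and the formula holds in $\mathbb N\cup\{\infty\}$ with the convention $\infty\cdot(\text{positive})=\infty$; the same recursion and telescoping go through with these conventions, so no separate argument is needed.

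I do not expect a real obstacle here — the only thing to be careful about is the uniform treatment of the $n_i=1$ case, where the child sequence is shorter but the child's vertex count still equals $n_i\prod_{j>i}(n_j+1)$, so that the single summation formula is valid without case distinctions; everything else is the telescoping identity above.
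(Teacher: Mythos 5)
Your core computation is exactly the paper's: the child rooted at $(n_{i+1},\ldots,n_k,n_i-1)$ has $n_i\prod_{j=i+1}^k(n_j+1)$ vertices (uniformly in whether $n_i=1$, as you rightly stress), and adding the root gives $1+\sum_{i=1}^k n_i\prod_{j=i+1}^k(n_j+1)$, which equals $\prod_{i=1}^k(n_i+1)$ — you verify this by telescoping, the paper by viewing the right-hand side as a partial expansion of the product; these are the same identity. However, there is a flaw in how you set up the recursion: you induct on the length $k$, claim ``in either case the child sequence has length $<k$'', and in the same sentence concede that the length is $k$ when $n_i>1$. Indeed the first child $(n_2,\ldots,n_k,n_1-1)$ has length exactly $k$ whenever $n_1\geq 2$, so the induction hypothesis does not apply to it as stated, and the induction as written is not well-founded. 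The repair is easy and is what the paper implicitly does: after disposing of the infinite case, induct on $\sum_{i=1}^k n_i$ (equivalently on $\sum_i(n_i+1)$, or on the number of vertices), which strictly decreases for every child, since the entries $n_1,\ldots,n_{i-1}$ are discarded and $n_i$ is replaced by $n_i-1$ (or dropped).

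A smaller point: for the case where some $n_i=\infty$, your remark that ``the same recursion and telescoping go through'' is not itself a proof, since no well-founded induction is available when an entry equals $\infty$; the correct and sufficient justification — which you also state — is simply that an $\infty$ entry produces an infinite descending chain of children, so $T$ is infinite and both sides of the formula equal $\infty$, exactly the paper's opening observation that the formula holds obviously when $\sum_i n_i=\infty$.
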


\begin{proof} The formula holds obviously if $\sum_{i=1}^k n_i=\infty$.
  We can thus assume $n_1,\ldots,n_k\in\{1,2,\ldots\}$.

  The formula holds for the tree $T()$ reduced to its root.

  The induction step reduces to the easy identity
  $$\prod_{i=1}^k(n_i+1)=1+\sum_{i=1}^kn_i\prod_{j=i+1}^k(n_j+1)$$
  with the right hand side obtained as a partial expansion of the product
  $(n_1+1)(n_2+1)\cdots(n_k+1)$.
\end{proof}

A rooted sub-tree $T'$ of a rooted tree $T$ is a sub-tree such that
$v$ in $T'$ for a vertex $v$ implies that the predecessor of $v$ in
$T$ belongs also to $T'$.

\begin{prop} The set of all successors of a NSG-composition
  $x_1+\ldots,x_{m-1}$ with maximum at least $2$ is a rooted
  sub-tree of $T(\mathcal D)$ with $\mathcal D=(z_{i_1}-x_{i_1},z_{i_2}-x_{i_2},\ldots,
z_{i_k}-x_{i_k})$ defined by (\ref{defD}).

\end{prop}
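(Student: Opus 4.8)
The plan is to prove this by exhibiting an explicit injective, order-preserving map from the set of successors of $x_1+\cdots+x_{m-1}$ into the vertex set of $T(\mathcal D)$, and then checking that the image is a rooted sub-tree in the sense just defined. The intuition is that the decorating data $(n_1,\ldots,n_k)$ at a vertex of $T(\mathcal D)$ records, in the ordering dictated by \eqref{defD}, how much ``room'' is left in each coordinate indexed by $\mathcal C$ before one hits the ceiling $z_{i}$; passing to a child in $T$ means incrementing one such coordinate (the one chosen to produce a child NSG-composition via \eqref{formchild}), which decreases its remaining room by $1$, possibly promotes other coordinates into the active list, and re-sorts everything by the rule ``$i_a<i_b$ iff $x_{i_a}<x_{i_b}$, or equal with $i_a<i_b$''. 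So I would first set up the bijection on one level: given a NSG-composition $y_1+\cdots+y_{m-1}$ that is a successor of $x_1+\cdots+x_{m-1}$, Section~\ref{secttree} tells us $x_i\le y_i\le z_i$ for all $i$, with equality $y_i=x_i$ off the child-index set of the \emph{original} composition; I would track which vertex of $T(\mathcal D)$ it corresponds to by recording the path from the root, where each step is the index $i\in\mathcal C$ whose part got incremented when passing from a semigroup to its child along the unique path in the semigroup tree from $x_1+\cdots+x_{m-1}$ down to $y_1+\cdots+y_{m-1}$.

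The key steps, in order, are: (i) recall from Section~\ref{secttree} that every successor of $T$ with maximum $\ge 2$ is obtained by a finite chain of child-operations \eqref{formchild}, and that at each stage the child-index set of the current composition is again a subset of indices, with the Frobenius-index always the ``largest'' in the $\mathcal D$-ordering; (ii) verify the crucial compatibility claim: if $S'$ is a child of $S$ obtained by incrementing coordinate $i_j$ (the $j$-th in the current $\mathcal D$-ordering), then the decoration-sequence $\mathcal D(S')$ of the children-data of $S'$ equals the $j$-th child-sequence $(n_{j+1},\ldots,n_k,n_j-1)$ prescribed by the recursive definition of $T$, where $(n_1,\ldots,n_k)=\mathcal D(S)$ — here one must check both that incrementing $y_{i_j}$ lowers its ceiling-gap by exactly $1$ (so the entry becomes $n_j-1$, dropped if it was $1$), and that the re-sorting and possible appearance of new active indices matches the cyclic-shift-and-append pattern; (iii) conclude that the map $S\mapsto(\text{path of increment-indices from the root})$ is well-defined into $T(\mathcal D)$, injective (distinct successors give distinct paths because the semigroup tree is a genuine tree and the increment-index at each step is recoverable from the Frobenius number), and has image closed under taking predecessors in $T$, i.e.\ is a rooted sub-tree.

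The main obstacle I expect is step (ii), the decoration-compatibility lemma: one has to pin down precisely how the ceiling vector $z_1+\cdots+z_{m-1}$ — which is computed by the NSG-algorithm of Remark~\ref{remalgo} applied to the generalised composition with $\infty$'s on $\mathcal C$ — changes when we pass to a child, and argue that incrementing one active coordinate does not lower any of the \emph{other} ceilings $z_i$ (it can only raise or fix them, since we are moving to a larger semigroup inside the same descendant-cone), while the re-indexing rule in \eqref{defD} exactly reproduces the ``rotate the first $j$ entries to the back, then decrement'' operation of $T$'s recursion. A careful statement of what ``the last element of $\mathcal D$ is the Frobenius index'' buys us is needed to make the re-sorting deterministic. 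Everything else — injectivity, the rooted-sub-tree property, and the reduction to a chain of child-moves — is essentially bookkeeping built on Section~\ref{secttree} and the preceding proposition on $|T(n_1,\ldots,n_k)|$.

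\begin{proof}[Proof sketch]
By the discussion in Section~\ref{secttree}, the successors of $x_1+\cdots+x_{m-1}$ form a subtree of the numerical-semigroup tree, every edge of which is a child-operation \eqref{formchild} incrementing a part indexed by an element of the relevant child-index set $\mathcal C$. Order $\mathcal C=\{i_1,\ldots,i_k\}$ as in \eqref{defD} and set $n_j=z_{i_j}-x_{i_j}\in\{1,2,\ldots\}\cup\{\infty\}$, so that $\mathcal D=(n_1,\ldots,n_k)$. We send a successor $S'$ to the vertex of $T(\mathcal D)$ reached by following, from the root, the sequence of increment-indices along the unique path from $x_1+\cdots+x_{m-1}$ down to $S'$.

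To see this lands in $T(\mathcal D)$ and is order-reversing in the required sense, it suffices to prove: if $S\mapsto v$ and $S'$ is the child of $S$ obtained by incrementing the part indexed by the $j$-th element of $\mathcal D(S)$, then the ordered children-datum $\mathcal D(S')$ equals the $j$-th child-label of $v$, namely $(n_{j+1},\ldots,n_k,n_j-1)$ (last entry omitted if $n_j=1$). Indeed, moving to the larger semigroup $S'$ inside the common descendant-cone can only raise or leave fixed each ceiling $z_i$ for $i\neq i_j$, and lowers the $i_j$-th ceiling-gap by exactly $1$; combined with the re-sorting rule of \eqref{defD} (whose terminal entry is always the current Frobenius index), this reproduces precisely the rotate-and-decrement recursion defining $T$. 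The remaining new active indices of $S'$ slot in ahead of $n_j-1$ in the prescribed order, matching the recursion.

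Granting this compatibility, the map is well-defined into $T(\mathcal D)$; it is injective because the increment-index at each step is recovered from the Frobenius number of the current composition, so distinct successors yield distinct root-paths; and its image is closed under passing to predecessors in $T$, since the predecessor of a successor $S'\neq x_1+\cdots+x_{m-1}$ is again a successor and its root-path is the truncation of that of $S'$. Hence the successors of $x_1+\cdots+x_{m-1}$ correspond to a rooted sub-tree of $T(\mathcal D)$.
\end{proof}
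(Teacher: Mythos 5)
Your overall strategy (send a successor to the vertex of $T(\mathcal D)$ determined by the sequence of increment-indices along its chain of child-moves, then check that the two parent maps agree) is workable, but the compatibility lemma you rest it on is false as stated. You claim that if $S'$ is the child of $S$ obtained by incrementing the part at the $j$-th position of $\mathcal D(S)=(n_1,\ldots,n_k)$, then $\mathcal D(S')=(n_{j+1},\ldots,n_k,n_j-1)$, and you justify this by asserting that passing to $S'$ ``can only raise or leave fixed'' the ceilings $z_i$. Both points fail. The ceilings can only \emph{decrease}: the floor semigroup computed for $S'$ is generated by the old generators together with the generators $i_r+mx_{i_r}$ of the newly frozen coordinates $r<j$, so it contains the floor semigroup of $S$ and hence $z_i(S')\le z_i(S)$. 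More importantly, the children-set $\mathcal C(S')$ can lose elements, including $i_j$ itself, so equality of decorations cannot hold. The paper's own example already refutes your lemma: for $3+1+2$ one has $\mathcal C=\{1,3\}$, $i_1=3$, $i_2=1$, $\mathcal D=(\infty,\infty)$; the child obtained by incrementing position $2$, namely $4+1+2$, has \emph{no} children at all ($5+1+2$ violates $x_1\le x_2+x_3+1$, and index $3$ no longer meets the position condition), so $\mathcal D(4+1+2)$ is the empty sequence rather than the predicted label $(\infty)$. This is exactly why the successor set is in general only a \emph{strict} sub-tree of $T(\mathcal D)$, as remarked right after the proposition; no argument based on equality of decorations can go through. (Your remark about ``remaining new active indices slotting in'' is moot — a short argument shows no index outside $\mathcal C(S)$ can ever become a child-index of a successor — and the parenthetical identification of $i_k$ with the Frobenius index, which you invoke to control the re-sorting, can itself fail when the last maximal part of $x$ is not a child-index.)

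What is true, and suffices, is a one-sided statement. Either weaken your step (ii) to: $\mathcal C(S')\subseteq\{i_j,i_{j+1},\ldots,i_k\}$, with the induced order inherited from the child-label and with ceiling-gaps bounded above by the corresponding label entries, and then build the embedding by induction; or, more cleanly, bypass decorations altogether. For the latter, identify the vertices of $T(\mathcal D)$ with increment vectors in the box $\prod_j\{0,\ldots,n_j\}$ (distinct children of a vertex of $T$ reach disjoint sets of increment vectors, and the greedy path realizes every vector of the box, so this is a bijection), map each successor $y_1+\cdots+y_{m-1}$ to the vector $(y_{i_1}-x_{i_1},\ldots,y_{i_k}-x_{i_k})$, and verify that the semigroup-parent of $y$ (decrement the coordinate maximizing $(y_i,i)$ lexicographically, i.e.\ the one carrying the Frobenius number) coincides with the $T$-parent (decrement the \emph{last} position of $\mathcal D$ at which $\max_j(y_{i_j}-x_{i_j})$ is attained). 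That verification is a short case check which uses precisely the ordering chosen in (\ref{defD}) together with the structural fact that child-indices carry the values $x_l$ or $x_l-1$, the latter having indices larger than $l$; it is the real content of the proposition. Since successors are closed under the semigroup-parent map, their image is then closed under the $T$-parent map, i.e.\ it is a rooted sub-tree of $T(\mathcal D)$.
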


We leave the obvious proof to the reader.

Observe that the set of all successors of a NSG-composition
defines in general a strict sub-tree of $T(\mathcal D)$:
The NSG-composition $3+1+2$ for example gives rise to
$\mathcal D=(\infty,\infty)$.
The corresponding combinatorial tree $T(\mathcal D)$ can be embedded in
$\mathbb R^2$ as follows: Vertices are all elements of $\mathbb N^2$.
Vertices in $\mathbb N\times\{0\}$ are labelled $(\infty,\infty)$.
All other vertices are labelled $(\infty)$. A vertex $(x,0)$
(labelled $(\infty,\infty)$) 
has two successors $(x+1,0)$ (labelled $(\infty,\infty)$) and $(x,1)$
(labelled $(\infty)$). A vertex $(x,y)$ with $y>0$ (labelled $(\infty)$)
has a unique successor $(x,y+1)$ (labelled $(\infty)$).
Only vertices $(x,y)\in\mathbb N^2$ with $y\in\{0,1\}$
correspond to NSG-compositions.

\begin{rem} It is tempting to use the combinatorial trees $T(\mathcal D)$
  for deriving bounds on NSG-compositions. This does
  not seem to pan out: It gives essential only the trivial bounds
  obtained by considering NSG-compositions of genus $g$ as a subset of
  all $2^{g-1}$ compositions of sum $g$.
\end{rem}

\section{Road map for proving Theorem \ref{thmupperboundng}}
\label{sectoutline}

We prove Theorem \ref{thmupperboundng} by exploiting 
the linear structure of compositions.
The proof has two essential
ingredients:

First we show that NSG-compositions with maximal parts
of size at least $4$
have growth-rate strictly smaller than $\omega$.
Asymptotics are thus given by
NSG-compositions with all parts in $\{1,2,3\}$.

In order to bound NSG-compositions with maximal parts larger than $3$,
we chose well-suited pivot-parts $x_p$ of maximal value
in such NSG-compositions.
This cuts a composition $x_1+\cdots+x_{m-1}$ into a left composition
$x_1+\cdots+x_{p-1}$, followed by the pivot-part $x_p$ and
a right composition $x_{p+1}+\cdots+x_{m-1}$.

We construct then a useful upper bound on
the number of possibilities
for left compositions $x_1+\cdots+x_{p-1}$,
retaining only suitably chosen NSG-inequalities given by the first line
of (\ref{fundeqsg}) 
with indices $s,t$ summing up at most to the pivot-index $p$.

Similarly, we construct an upper bound on the number of possibilities
for right compositions $x_{p+1}+\cdots+x_{m-1}$ such that NSG-inequalities
of the second line of (\ref{fundeqsg}) hold for 
suitable choices of $s,t$ such that $m-s,m-t>p$ and $m-s-t\geq p$.
The additional summand $+1$ in the second line of (\ref{fundeqsg}) makes
the study of right compositions a bit spicier.

The product of the two upper bounds for left,
respectively right, compositions is now an
upper bound for the number of all NSG-compositions.
In terms of growth-rates, this translates into the
fact that the maximum of the growth-rates for left,
respectively right, compositions is at least equal to the
growth-rate of all NSG-compositions. This inequality is
sharp for NSG-compositions of maximum $3$, see below.

by taking the maximum among the two upper bounds for left,
respectively right, compositions.

This approach is easy for NSG-compositions with a maximal part
of size at least $6$: It is then enough to work with all inequalities
(\ref{fundeqsg}) such that $s+t=l$, respectively $m-s-t=l$
where $x_l$ is the last part of maximal size defining the Frobenius
number $(x_l-1)m+l$.

For maximum $5$ and $4$, things get more messy: we have
to consider a few additional NSG-inequalities after a careful
choice of pivot-parts. (We need also a technical condition
on such NSG-compositions. NSG-compositions not satisfying the condition
are treated by the procrastinational technique of kicking the can down the road.)

Finally we have to study NSG-compositions with maximum at most $3$.
(The elementary case of maximum $\leq 2$ has already been discussed in
Section \ref{sectmax2}.)
The crucial point for NSG-compositions of maximum $3$ is the
observation that pivoting with respect to the last maximal part
works:
all NSG-inequalities of the second line hold trivially for compositions with
parts in $\{1,2,3\}$. We get thus a ``factorisation''
$$(x_1+\cdots+x_l)+(x_{l+1}+\cdots+x_{m-1})$$
where $x_l=3$ is the last maximal part (defining the Frobenius number
$2m+l$) of such a NSG-composition.
This leads to equation (\ref{formulaforC}) of Theorem \ref{thmupperboundng}.

Finally, we have to show that the generating series $\tilde C$ enumerating
NSG-compositions ending with a last maximal part $x_{m-1}=3$
converges in an open disc of radius strictly larger than $\omega^{-1}$.
The proof is essentially analogous to the proof for an upper bound
on the number of right compositions associated to NSG-compositions
of maximum $4$.

\section{Pivot-factorisation}\label{sectpivot}

A part $x_p$ of a composition $x_1+\cdots+x_{m-1}$ determines
a \emph{pivot-factorisation} given by $x_1+\cdots +x_{p-1}(+x_p)$ and
$(x_p+)+x_{p-1}+\cdots+x_{m-1}$. We call $p$ the \emph{pivot-index}
and $x_p$ the \emph{pivot-part}. We call $x_1+\cdots(+x_p)$ the
\emph{left composition} and $(x_p+)\cdots+x_{m-1}$ the \emph{right composition}
(defined by the pivot-index $p$). The parentheses around $x_p$ indicate
that the inclusion (or exclusion) of $x_p$ is a matter of convention.
We omit generally pivot-parts in left or right compositions.

If $x_1+\cdots+x_{m-1}$ is a NSG-composition, parts of the left composition
$x_1+\cdots(+x_p)$ (with respect to a pivot-part $x_p$) satisfy
the NSG-inequalities
$$x_i+x_j\geq x_{i+j}$$
if $i+j\leq p$ and parts of the right composition satisfy
$$x_i+x_j+1\geq x_{i+j-m}$$
if $i+j\geq p+m$.

Pivot-factorisation with a pivot-part of maximal
size are our main tool for getting useful upper bounds on
NSG-compositions with genus $g$ and maximum $\geq 4$.
More precisely, canonical choices of maximal pivot-parts
in compositions lead to factorisation $LR$
(with $L$, respectively $R$, being generating series accounting for
left, respectively right, compositions) of power
series giving useful upper bounds on NSG-compositions
of certain types.

This idea works nicely for NSG-compositions of maximum
$\mu\geq 6$ and needs a few technical
refinements for $\mu=4$ and $5$.

Pivot-factorisation with respect to a last maximal part with value
$3$ lead to Theorem \ref{thmupperboundng}:
The left factor is given by $1+\tilde C$ (the summand $1$ in
$1+\tilde C(\omega^{-1})$ in (\ref{formulaforC}) accounts for
NSG-compositions of maximum at most $2$),
the right factor, given by the rational series $1/(1-(q+q^2))$, induces the
growth-rate $\omega$.

\section{Generating series, growth-rates}\label{sectgen}

We define the (exponential) \emph{growth-rate} of a sequence of strictly
positive natural integers
$s_1,s_2,\ldots$ by $\gamma=\limsup_{n\rightarrow\infty}\sqrt[n]{s_n}$.
The sequence $s_n$ has exponential growth, if $1<\gamma<\infty$.
We consider henceforth only sequences with
exponential growth.
Given $\epsilon>0$, we have  $s_n<(\gamma+\epsilon)^n$ for almost all
integers $n$ and $s_n>(\gamma-\epsilon)^n$ infinitely often.
The inverse $\rho=1/\gamma$ of the growth-rate $\gamma$ for
$s_0,s_1,\ldots$ is the radius of convergency for the
power series $\sum_{n=0}^\infty s_NR^n$.

\begin{rem} Having exponential growth $\gamma$ is slightly weaker than 
having an asymptotic growth of exponential rate $\gamma$
(defined as $\gamma=\lim_{n\rightarrow\infty}\sqrt[n]{s_n}$). 
\end{rem}

A non-constant power-series with real non-negative coefficients
of growth-rate $\gamma$
defines a holomorphic function in a neighbourhood of $0$ which has
always a smallest singularity at its convergency radius
$\rho=1/\gamma$. If such a series $\sum_{n=0}^\infty s_NP^n$
is rational, then its singularities are isolated and
$\sum_{n=0}^\infty s_Nq_0^n<\infty$ for some strictly positive $q_0$
implies $\gamma<1/q_0$. Strict inequality does however generally
not hold for series which are not rational:
Coefficients of the series
$\sum_{n=0}^\infty\lfloor \gamma^n/(1+n^2)\rfloor q^n$
have growth-rate $\gamma$ for $\gamma>1$
and the series converges for $q$ of absolute value $\vert q\vert=1/\gamma$.
The following result describes however a well-behaved class of
generally irrational power series (with non-negative coefficients):

\begin{lem}\label{lemconvradius}
  Let $S(q)=\sum_{n=n_0}^\infty A_n/(1-B_n)$ be a power-series
with coefficients in $\mathbb N$
  defined by sequences of polynomials $A_n,B_n\in\mathbb N[q]$
  satisfying linear recursions with coefficients in $\mathbb Q[q]$.

  Suppose that there exists a strictly positive real number $\rho_0$
  such that $S(q)$ converges for $q=\rho_0$ and such that the evaluations
  of $A_n,B_n$ at $q=\rho_0$ decay exponentially fast. Then $S(q)$
  has convergency radius strictly larger than $\rho_0$.
\end{lem}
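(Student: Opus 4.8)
The plan is to exploit the fact that a sequence of polynomials $A_n$ (resp.\ $B_n$) satisfying a linear recursion with coefficients in $\mathbb Q[q]$ is, after clearing denominators, governed by a characteristic equation whose roots are algebraic functions of $q$. First I would fix notation: say $A_n$ satisfies $p_0(q)A_n = p_1(q)A_{n-1} + \cdots + p_r(q)A_{n-r}$ for polynomials $p_i \in \mathbb Q[q]$, and similarly for $B_n$ with some $q_j(q)$. For $q$ in a neighbourhood of a fixed $\rho_0 > 0$ at which $p_0(\rho_0) \neq 0$ (the finitely many zeros of $p_0$ can be avoided by shrinking the neighbourhood, since $\rho_0$ is not one of them — if it were, $A_n$ would not be well-defined), the recursion has a companion matrix $M_A(q)$ with entries holomorphic in $q$, and $A_n(q)$ is a fixed linear combination (with coefficients holomorphic in $q$, determined by the initial data $A_{n_0}, \dots, A_{n_0+r-1}$) of the entries of $M_A(q)^n$ applied to the initial vector. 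Hence $|A_n(q)| \le K(q)\,\|M_A(q)\|^n$ for a constant $K(q)$ bounded on compact neighbourhoods of $\rho_0$.

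The key step is then a spectral-radius argument. The hypothesis that $A_n(\rho_0)$ and $B_n(\rho_0)$ decay exponentially fast forces the spectral radius of $M_A(\rho_0)$ to be strictly less than $1$ (and likewise for $M_B(\rho_0)$): if some eigenvalue of $M_A(\rho_0)$ had modulus $\ge 1$, then generically the corresponding component of $A_n(\rho_0)$ would not decay, and one rules out the degenerate cancellations using that $A_n(\rho_0) \in \mathbb N$ together with exponential decay (a positive integer sequence decaying exponentially is eventually zero, and one checks the recursion then forces the relevant eigendirection to contribute nothing). Since eigenvalues depend continuously on the matrix and $M_A(q)$ depends continuously (indeed holomorphically) on $q$, there is $\rho_1 > \rho_0$ and $\theta < 1$ such that the spectral radius of both $M_A(q)$ and $M_B(q)$ stays below $\theta$ for all real $q \in [\rho_0, \rho_1]$; by Gelfand's formula, after possibly enlarging the bound to some $\theta' \in (\theta, 1)$, we get $\|M_A(q)^n\|, \|M_B(q)^n\| \le C (\theta')^n$ uniformly on $[\rho_0, \rho_1]$. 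Consequently $|A_n(q)| \le C' (\theta')^n$ and $|B_n(q)| \le C'(\theta')^n$ on $[\rho_0, \rho_1]$, so in particular $|B_n(q)| < 1/2$ for $n$ large and $\sum_n |A_n(q)|/|1 - B_n(q)|$ converges for every such $q$; evaluating at $q = \rho_1$ shows $S$ converges at a point $> \rho_0$, whence its convergence radius exceeds $\rho_0$.

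The main obstacle is the spectral-radius step: translating the scalar hypothesis ``$A_n(\rho_0) \to 0$ exponentially'' into the matrix statement ``$\mathrm{spec.rad.}\,M_A(\rho_0) < 1$''. The subtlety is that a single scalar projection of the orbit $M_A(\rho_0)^n v_0$ could accidentally decay even if $M_A(\rho_0)$ has a large eigenvalue, provided $v_0$ lies in (or the projection kills) the offending invariant subspace. One must argue that, because the same recursion produces $A_n$ for all $n \ge n_0$ and the initial data are genuine (the $A_n$ are the actual Taylor-type data of the problem, not chosen adversarially), the decay is seen along the whole orbit; alternatively — and this is the cleaner route I would take — one may simply replace ``spectral radius $< 1$'' by the weaker but sufficient statement that the scalar sequences $A_n(q)$, $B_n(q)$ themselves decay exponentially for $q$ slightly larger than $\rho_0$, proved directly: write $A_n(q) - A_n(\rho_0)$ via the recursion and a telescoping/Gronwall-type estimate in the real variable $q$ on $[\rho_0,\rho_1]$, controlling the derivative of the transfer matrix, so that the exponential decay at $\rho_0$ persists, with a slightly worse rate, on a one-sided neighbourhood. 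This avoids eigenvalue bookkeeping entirely and isolates the one genuinely analytic input, namely uniform control of $\|M_A(q)\|$ and its $q$-derivative near $\rho_0$.
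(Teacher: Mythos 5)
Your overall route (companion/transfer matrix for the recursion, then continuity in $q$) is the same as the paper's, but the step you yourself single out as the key one --- ``exponential decay of $A_n(\rho_0)$ forces the spectral radius of $M_A(\rho_0)$ to be $<1$'' --- is not established, and both patches you offer fail. First, $A_n(\rho_0)$ is not an integer: $A_n\in\mathbb N[q]$, but $\rho_0$ is an arbitrary positive real (in every application of the lemma it is $\omega^{-1}$, which is irrational), so the argument ``a positive integer sequence decaying exponentially is eventually zero'' has no purchase. Second, the implication is false as stated, because the recursion is not assumed minimal: the companion matrix may have extraneous eigenvalues of modulus $\geq 1$ that the sequence simply does not see. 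For instance $A_n=q^n$ satisfies $A_n=(q+2)A_{n-1}-2qA_{n-2}$, whose characteristic roots are $q$ and $2$; at $\rho_0=1/2$ the sequence $A_n(\rho_0)=2^{-n}$ decays while the companion matrix has eigenvalue $2$. Your fallback (a telescoping/Gronwall estimate for $A_n(q)-A_n(\rho_0)$ on $[\rho_0,\rho_1]$) does not circumvent this: any such estimate is driven by bounds on powers of $M_A(q)$ near $\rho_0$, which is precisely what is unavailable when an eigenvalue of modulus $\geq 1$ is present (or when its contribution merely cancels at $\rho_0$); generically the perturbation term then behaves like $\Vert M_A\Vert^n(q-\rho_0)$ and grows instead of decaying.

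The hypothesis you never use is the one that closes the gap: the polynomials $A_n,B_n$ have non-negative coefficients, and the recursion forces $\deg A_n,\deg B_n\leq Dn+O(1)$ for some fixed $D$. Hence for $q\geq\rho_0$ one has $A_n(q)=\sum_k a_{n,k}q^k\leq (q/\rho_0)^{\deg A_n}A_n(\rho_0)\leq C\bigl(\theta\,(q/\rho_0)^{D}\bigr)^{n}$ up to a bounded factor, and likewise for $B_n$; choosing $\rho_1>\rho_0$ with $\theta(\rho_1/\rho_0)^{D}<1$ propagates the exponential decay to $[\rho_0,\rho_1]$ with no spectral analysis at all, after which your concluding step goes through (non-negativity of all power-series coefficients is also what lets you pass from numerical convergence of $\sum_n A_n(\rho_1)/(1-B_n(\rho_1))$, with $B_n(q)<1$ there, to the assertion that the power series $S$ has convergency radius $>\rho_0$). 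For comparison, the paper's own proof is exactly the eigenvalue-continuity argument you attempted, is equally terse about the cancellation issue you noticed, and invokes non-negativity only at the end; as written, your proposal neither proves the spectral claim nor exploits positivity, so the decisive step is missing.
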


\begin{proof}[Proof of Lemma \ref{lemconvradius}]
  The hypotheses imply that evaluations at $\rho_0$ of the characteristic
  polynomials defining linear recursions for $A_n$ and $B_n$ have
  all roots in the open complex unit disc.
  This condition (which implies exponentially fast decay)
  holds by continuity for $q$ close enough to $\rho_0$.
  
  A similar arguments shows that all evaluations of $B_n$ at $q$ are
  strictly smaller than $1$ for $q$ close enough to $\rho_0$.

  Non-negativity of all involved coefficients shows now that
  the convergency radius of $S$ is strictly larger than $\rho_0$.
\end{proof}  

\begin{rem} Lemma \ref{lemconvradius} streamlines a few proofs. It can
  however easily be replaced by a few computations involving
  eigenvalues and eigenvectors of transfer matrices.
\end{rem}

Throughout the paper we will also use several times the trivial
fact that the convergency radius of a finite product of
power-series is at least equal to the minimal convergency radius
among factors.

\section{Weakly admissible compositions}\label{sectweakly}

A composition $x_1+\cdots+x_{m-1}$ with last maximal part $x_l=\max(x_1,\ldots,
x_l)>\max(x_{l+1},\ldots,x_{m-1})$ is \emph{weakly admissible}
if $x_l\leq \min(x_1+x_{l-1},x_2+x_{l-2},\ldots,x_{l-1}+x_1)$
and $x_l\leq 1+\min(x_{l+1}+x_{m-1},x_{l+2}+x_{m-2},\ldots,x_{m-1}+x_{l+1})$.
Otherwise stated,
we require only
NSG-inequalities (\ref{fundeqsg}) 
involving the last maximal part $x_l$ of $x_1+\cdots+x_{m-1}$.

Since NSG-compositions are weakly admissible we get crude upper
bounds for NSG-compositions by counting weakly admissible
compositions.

Observe however that the composition $1+3+3$ for example
is weakly admissible but is not a NSG-composition.

This section is devoted to generating series of weakly admissible 
compositions having maximal parts of given size $\mu$.

Weakly admissible compositions of maximum at least $6$ have growth-rate
strictly smaller than $\omega$ and give thus
useful upper bounds
for NSG-compositions with maximal parts of size at least $6$.

Refinements are needed for useful bounds on 
NSG-compositions with maximal parts of size $4$ and $5$.
NSG-compositions with maximal parts of size at most $3$
are asymptotically generic (their proportion among all
NSG-compositions tends to $1$ for $g\rightarrow\infty$)
and induce the growth-rate for numerical semigroups.

For $n\geq 1$, we consider the generating polynomial
\begin{align}\label{defIn}
I_n=\sum_{1\leq a,b\leq n\leq a+b}q^{a+b}
\end{align}
of all compositions $a+b$ with total sum at least $n$ into two parts
$a,b$ not exceeding $n$.

The polynomial $I_n$ can be computed by removing 
all contributions of degree strictly less than $n$ (corresponding 
to compositions with two parts summing up to integers strictly
smaller than $n$) from $(q+q^2+\ldots+q^n)^2$.
This implies easily the closed formula
\begin{align}\label{formulaIn}
I_n&=q^n\left(-2+\sum_{i=0}^n(n+1-i)q^i\right).
\end{align}

The following table gives the first few polynomials $I_n$ 
and their evaluations
$I_n(\omega^{-1})$ in $\mathbb Q[\sqrt{5}]$, together with a decimal 
approximation, at the inverse $\omega^{-1}$ of the golden
number $\omega=\frac{1+\sqrt{5}}{2}$:
$$\begin{array}{l|c|c|c}
k&I_k&I_k(1/\omega)&\sim\\
\hline
1&q^2&(3-\sqrt{5})/2&0.3820\\
2&q^2+2q^3+q^4&1&1\\
3&2q^3+3q^4+2q^5+q^6&(9-3\sqrt{5})/2&1.1459\\
4&3q^4+4q^5+3q^6+2q^7+q^8&10-4\sqrt{5}&1.0557\\
5&4q^5+5q^6+4q^7+3q^8+2q^9+q^{10}&21-9\sqrt{5}&0.8754\\
6&5q^6+6q^7+5q^8+4q^9+3q^{10}+2q^{11}+q^{12}&70-31\sqrt{5}&0.6819
\end{array}$$

\begin{prop}\label{propweaklyadmgenser} The generating series
$W_\mu(q)$ for weakly admissible compositions $x_1+\cdots+x_{m-1}$
of genus $g$ with maximal
parts $\max(x_1,\dots,x_{m-1})=\mu\geq 2$ is given by
\begin{align}\label{formulaweaklyadma}
W_\mu(q)&=
\frac{1+\sum_{i=\lceil \mu/2\rceil}^\mu q^i}{1-I_\mu}q^\mu
\frac{1+\sum_{i=\lfloor \mu/2\rfloor }^{\mu-1}q^i}{1-I_{\mu-1}}.
\end{align}
\end{prop}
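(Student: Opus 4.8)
The strategy is to decompose a weakly admissible composition $x_1+\cdots+x_{m-1}$ with last maximal part $x_l=\mu$ at the pivot-index $l$, exactly as in the pivot-factorisation of Section \ref{sectpivot}. Writing the composition as $L \mathbin{+} x_l \mathbin{+} R$ with $L = x_1+\cdots+x_{l-1}$ the left composition and $R = x_{l+1}+\cdots+x_{m-1}$ the right composition, weak admissibility imposes (by definition) only the NSG-inequalities that involve $x_l$: namely $x_l \le x_i + x_{l-i}$ for $1\le i\le l-1$ on the left, and $x_l \le 1 + x_{l+i}+x_{m-i}$ for $1\le i\le m-1-l$ on the right. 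These two families of constraints are independent, so $W_\mu(q) = L_\mu(q)\cdot q^\mu \cdot R_\mu(q)$ where $L_\mu$ enumerates admissible left parts and $R_\mu$ enumerates admissible right parts; it then suffices to show $L_\mu(q) = \bigl(1+\sum_{i=\lceil \mu/2\rceil}^\mu q^i\bigr)/(1-I_\mu)$ and $R_\mu(q) = \bigl(1+\sum_{i=\lfloor \mu/2\rfloor}^{\mu-1}q^i\bigr)/(1-I_{\mu-1})$.

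For the left factor: $L$ is a (possibly empty) composition $x_1+\cdots+x_{l-1}$ with all parts $\le \mu$, subject to $x_i+x_{l-i}\ge \mu$ for all $i$. This is a ``palindromic-sum'' constraint coupling the $i$-th and $(l-i)$-th parts. I would read the composition simultaneously from both ends, pairing $x_i$ with $x_{l-i}$: each such pair $(x_i,x_{l-i})$ with $1\le x_i,x_{l-i}\le\mu$ and $x_i+x_{l-i}\ge\mu$ contributes a factor $I_\mu$ (by the very definition \eqref{defIn} of $I_\mu$), and the $1/(1-I_\mu)$ resums over the number of such pairs. The numerator $1+\sum_{i=\lceil\mu/2\rceil}^\mu q^i$ accounts for the two boundary cases: either $l-1$ is even and $L$ is a concatenation of pairs only (the summand $1$, for the empty word too), or $l-1$ is odd and there is an unpaired middle part $x_{l/2}$, which must satisfy $2x_{l/2}\ge\mu$, i.e.\ $x_{l/2}\ge\lceil\mu/2\rceil$, hence the term $\sum_{i=\lceil\mu/2\rceil}^\mu q^i$. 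The right factor is entirely analogous, but now the relevant inequality is $x_{l+i}+x_{m-i}\ge \mu-1$ (the $+1$ on the right-hand side of \eqref{fundeqsg} weakens the constraint by one), so pairs are enumerated by $I_{\mu-1}$ and an unpaired middle part must satisfy $2x\ge\mu-1$, i.e.\ $x\ge\lceil(\mu-1)/2\rceil=\lfloor\mu/2\rfloor$, giving the numerator $1+\sum_{i=\lfloor\mu/2\rfloor}^{\mu-1}q^i$; the parts are still bounded by $\mu$, but a pair summing to more than $2\mu-1$ is impossible when each part is $\le\mu$ unless... in fact the bound $\le\mu$ is automatic here since the maximal part of the whole composition is $\mu$ and it is attained at $x_l$, so every $x_{l+i}\le\mu$.

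The delicate point — and the one I expect to be the main obstacle — is getting the boundary bookkeeping exactly right: one must be careful that ``last maximal part equal to $\mu$'' means strict inequality $x_l>\max(x_{l+1},\ldots,x_{m-1})$, so on the right the parts are bounded by $\mu$ but there is no lower constraint beyond the pairing inequalities and positivity, whereas on the left parts may again equal $\mu$. One also has to check that the pairing decomposition is a genuine bijection: reading from both ends inward is unambiguous, and the middle part (if any) is uniquely determined by the parity of $l-1$. A final sanity check is to verify the formula against the case $\mu=2$, where $I_2 = q^2+2q^3+q^4$ and $I_1=q^2$, against the known generating function for compositions with parts in $\{1,2\}$ and last maximal part $2$ from Section \ref{sectmax2}; this confirms the indexing conventions in \eqref{formulaweaklyadma}.
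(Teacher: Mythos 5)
Your proposal is correct and follows essentially the same route as the paper's proof: pivot-factorisation at the last maximal part $x_l=\mu$, pairing $x_i$ with $x_{l-i}$ on the left (each pair counted by $I_\mu$, with a middle part in $\{\lceil\mu/2\rceil,\ldots,\mu\}$ when $l$ is even) and $x_{l+i}$ with $x_{m-i}$ on the right (each pair counted by $I_{\mu-1}$ because the $+1$ weakens the inequality and the strictness $x_l>x_{l+1},\ldots,x_{m-1}$ caps right parts at $\mu-1$, with a middle part in $\{\lfloor\mu/2\rfloor,\ldots,\mu-1\}$ when $m+l$ is even). Just tighten the phrasing on the right factor: the right parts are bounded by $\mu-1$, not $\mu$, which is exactly what makes $I_{\mu-1}$ and the numerator range correct.
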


\begin{rem}
  Since all compositions with parts in $\{1,2\}$ are
  NSG-compositions, Propositions \ref{propparts12}
  and \ref{propweaklyadmgenser}
  imply the identity
  $$\frac{1}{1-(q+q^2)}=\frac{1}{1-q}+W_2(q)\ .$$
\end{rem}

\begin{proof}[Proof of Proposition \ref{propweaklyadmgenser}]
Let $x_1+\cdots+x_{l-1}+x_l+x_{l+1}+\cdots+x_{m-1}$ 
be a weakly admissible 
composition with $x_1,\ldots,x_{l-1}\leq x_l=\mu>x_{l+1},\ldots,x_{m-1}$.
We have thus $1\leq x_i,x_{l-i}\leq \mu\leq x_i+x_{l-i}$
for $i<l/2$. For odd $l$ there are $I_\mu^{(l-1)/2}$ possibilities
satisfying these inequalities. For $l$ even, we have moreover to
choose a coefficient $x_{l/2}$ in $\{\lceil \mu/2\rceil,\ldots,\mu\}$.
Summing over $l$ in $\mathbb N\setminus\{0\}$ we get the left factor 
$(1+\sum_{i=\lceil \mu/2\rceil}^\mu q^j)/(1-I_\mu)$ of (\ref{formulaweaklyadma})
enumerating all possibilities for left compositions with respect to the
pivot-part $x_l=\mu$ given by the last maximal part $x_l$.

The central factor $q^\mu$ accounts for the pivot-part 
$x_l=\mu$.

The final right factor corresponds to all possibilities
involving the parts $x_{l+1},\ldots,x_{m-1}\in\{1,\ldots,\mu-1\}$
of right compositions
following the pivot-part $x_l=\mu$: 
We have $1\leq x_{l+i},x_{m-i}\leq \mu-1\leq
x_{l+i}+x_{m-i}$. Such pairs $(x_{l+i},x_{m-i})$
are thus encoded by powers of 
$I_{\mu-1}$ and we have moreover a choice of $x_{(m+l)/2}$
in $\{\lfloor \mu/2\rfloor,\ldots,\mu-1\}$ if $m+l$ is even.
\end{proof}

\begin{rem} Formula (\ref{formulaweaklyadma}) gives crude upper bounds:
It counts only compositions satisfying NSG-inequalities (\ref{fundeqsg})
involving the last maximal part. Only a small proportion of weakly
admissible compositions with maximum strictly larger than $2$
satisfy all NSG-inequalities.
\end{rem}

\section{NSG-compositions with maximum $\geq 6$}\label{sectmax6}

Proposition \ref{propweaklyadmgenser} gives useful
upper bounds for the number of NSG-compositions with maximum
at least $6$, as suggested by the evaluations $I_n(\omega^{-1})$
of the first few polynomials $I_n$ defined by (\ref{formulaIn}):

\begin{prop}\label{propmaxgeq6} Numbers of
  weakly admissible compositions with maximum at least $6$
  have growth-rate strictly smaller than $\omega$.
\end{prop}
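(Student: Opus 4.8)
The plan is to bound the growth-rate of $W_\mu(q)$ from Proposition \ref{propweaklyadmgenser} by locating its smallest singularity, and then to show this singularity is bounded away (uniformly in $\mu\geq 6$) from $\omega^{-1}$ on the negative-exponent side. From formula (\ref{formulaweaklyadma}), $W_\mu(q)$ is a rational function whose denominator is $(1-I_\mu)(1-I_{\mu-1})$, with $I_n$ the polynomial of (\ref{formulaIn}); the numerator is a polynomial with non-negative coefficients. Since all coefficients in sight are non-negative, the smallest positive singularity of $W_\mu$ is the smallest positive root of $(1-I_\mu(q))(1-I_{\mu-1}(q))$, i.e. the smaller of the smallest positive roots of $1-I_\mu$ and $1-I_{\mu-1}$. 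So the growth-rate of the coefficients of $W_\mu$ equals $\max(1/r_\mu,1/r_{\mu-1})$ where $r_n$ is the least positive root of $I_n(q)=1$. It thus suffices to prove that for all $n\geq 5$ one has $I_n(\omega^{-1})<1$, and in fact $I_n(\omega^{-1})\leq I_6(\omega^{-1})=70-31\sqrt5<0.69<1$ with room to spare; by monotonicity of $I_n$ on $[0,1]$ (all coefficients positive) this forces $r_n>\omega^{-1}$, hence growth-rate $<\omega$.

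First I would record the elementary fact that for fixed $q\in(0,1)$, $n\mapsto I_n(q)$ is eventually decreasing and tends to $0$. Using the closed form (\ref{formulaIn}), $I_n(q)=q^n\bigl(-2+\sum_{i=0}^n(n+1-i)q^i\bigr)$, and the bracketed sum is bounded above by $\sum_{i=0}^\infty (n+1)q^i=(n+1)/(1-q)$, so $I_n(q)\leq (n+1)q^n/(1-q)-2q^n$; for $q=\omega^{-1}$ the dominant term $(n+1)\omega^{-n}$ goes to $0$, and an explicit estimate shows $(n+1)\omega^{-n}/(1-\omega^{-1})<1$ already for $n\geq 6$ (indeed $\omega^{-1}=(\sqrt5-1)/2$, $1-\omega^{-1}=(3-\sqrt5)/2\approx0.382$, and $(n+1)\omega^{-n}$ at $n=6$ is about $7\cdot0.0557\approx0.39$, giving a bound just above $1$, so I would instead use the sharper evaluation). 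To make the bound clean I would compute $I_n(\omega^{-1})$ exactly via $I_{n+1}(q)-I_n(q)$ as a telescoping expression, or simply verify the table: $I_6(\omega^{-1})=70-31\sqrt5\approx0.6819<1$, and then show $I_{n+1}(\omega^{-1})<I_n(\omega^{-1})$ for $n\geq 6$ by evaluating the difference $I_{n+1}-I_n$ and checking its sign at $\omega^{-1}$.

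The cleanest route to the monotonicity step uses the identity coming from the definition of $I_n$ as the truncation of $(q+\cdots+q^n)^2$: from (\ref{formulaIn}), $I_{n+1}(q)-I_n(q)=q^{n+1}\bigl(-2+\sum_{i=0}^{n+1}(n+2-i)q^i\bigr)-q^n\bigl(-2+\sum_{i=0}^{n}(n+1-i)q^i\bigr)$. Expanding, the $q^{n+k}$-coefficient of this difference works out to a simple linear expression in $k$, and one checks that at $q=\omega^{-1}$ the whole difference is negative because it is dominated by the leading negative contribution $-2q^n(q-1)<0$ together with the geometric tail; concretely $I_{n+1}(q)-I_n(q) = q^n\bigl(2(1-q) + q\sum_{i=0}^{n+1}(n+2-i)q^i - \sum_{i=0}^n (n+1-i)q^i\bigr)$ and I would bound the parenthesis at $\omega^{-1}$ using $\sum(n+1-i)q^i < (n+1)/(1-q)$ and the exact value $1-\omega^{-1}=(3-\sqrt5)/2$. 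The arithmetic is routine once set up. Combining: for $\mu\geq 6$ both $r_\mu$ and $r_{\mu-1}$ exceed $\omega^{-1}$, so the growth-rate $\max(1/r_\mu,1/r_{\mu-1})<\omega$.

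The main obstacle I anticipate is purely the bookkeeping in the monotonicity estimate $I_{n+1}(\omega^{-1})<I_n(\omega^{-1})$: the naive bound $(n+1)\omega^{-n}/(1-\omega^{-1})$ is uncomfortably close to $1$ at $n=6$ (it is in fact slightly above $1$), so one cannot get away with the crudest geometric-series bound and must either use the sharper closed form $I_n=q^n(-2+\sum_{i=0}^n(n+1-i)q^i)$ with its exact evaluation, or subtract off the first couple of terms of the geometric tail before bounding the remainder. A convenient workaround is to verify the base cases $n=6,7,8$ numerically (they are in, or easily extend, the displayed table) and then run the general monotonicity argument only for $n\geq 8$, where the geometric slack is comfortable. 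Everything else — rationality of $W_\mu$, non-negativity forcing the dominant singularity to be real positive, and the translation between smallest positive root and growth-rate (Section \ref{sectgen}) — is immediate from the material already established.
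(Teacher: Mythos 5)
There is a genuine gap, and it concerns what the proposition actually asserts. Your argument bounds, for each \emph{fixed} $\mu\geq 6$, the growth-rate of the coefficients of $W_\mu$, via $I_\mu(\omega^{-1})<1$ and $I_{\mu-1}(\omega^{-1})<1$; your concluding sentence (``for $\mu\geq 6$ both $r_\mu$ and $r_{\mu-1}$ exceed $\omega^{-1}$, so the growth-rate $\max(1/r_\mu,1/r_{\mu-1})<\omega$'') is exactly such a per-$\mu$ statement. But the proposition counts, for each genus $g$, \emph{all} weakly admissible compositions with maximum at least $6$, i.e.\ the coefficient of $q^g$ in $\sum_{\mu\geq 6}W_\mu(q)$, and this is how it is used later (to dispose of all maxima $\geq 6$ at once). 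Knowing that each summand has convergency radius $>\omega^{-1}$ does not by itself bound the aggregate: you need a single $q_0>\omega^{-1}$ at which the whole sum $\sum_{\mu\geq 6}W_\mu(q_0)$ is finite (then non-negativity gives the count $\leq q_0^{-g}\sum_\mu W_\mu(q_0)$). Your evaluations are made only at the point $\omega^{-1}$ itself; by Pringsheim this gives $r_n>\omega^{-1}$ for each $n$ separately, but not a common $q_0$ with $\sup_{n\geq 5}I_n(q_0)<1$, and (as Section \ref{sectgen} warns) for a non-rational series such as the infinite sum over $\mu$, convergence exactly at $\omega^{-1}$ would not even suffice. This is precisely the point the paper's proof is organised around: it treats $W_6$ on its own and then majorises every $I_n$, $n\geq 6$, coefficientwise by the single rational series $\mathbf I_6=q^6(5-4q)/(1-q)^2$, so that $\sum_{n\geq 7}W_n$ is dominated by one rational function $q^7/\bigl((1-q)^3(1-\mathbf I_6)^2\bigr)$ whose smallest positive pole $0.6206\ldots$ exceeds $\omega^{-1}$.

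The gap is repairable within your framework, but the repair is the actual content of the proof: fix $q_0\in(\omega^{-1},1)$ and show $\sup_{n\geq 5}I_n(q_0)\leq c<1$ (your bound $I_n(q_0)\leq (n+1)q_0^n/(1-q_0)\to 0$ handles all large $n$, and the finitely many remaining $n$ can be checked directly), then observe that the numerator of $W_\mu(q_0)$ in (\ref{formulaweaklyadma}) is $O(q_0^\mu)$ uniformly, so $\sum_{\mu\geq 6}W_\mu(q_0)$ converges geometrically and the growth-rate of the total count is at most $1/q_0<\omega$. Without this summation step (or the paper's uniform majorant $\mathbf I_6$, or an appeal in the spirit of Lemma \ref{lemconvradius}), the proposal proves a weaker statement than Proposition \ref{propmaxgeq6}. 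A minor additional slip: the claim $I_n(\omega^{-1})\leq I_6(\omega^{-1})$ cannot include $n=5$, since $I_5(\omega^{-1})\approx 0.8754>I_6(\omega^{-1})\approx 0.6819$; you need $n=5$ because $W_6$ involves $1-I_5$, so state the uniform bound as, say, $I_n(\omega^{-1})\leq I_5(\omega^{-1})<1$ for $n\geq 5$.
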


\begin{proof} Proposition \ref{propweaklyadmgenser} shows that it 
  is enough to prove that $\sum_{n=6}^\infty W_n(q)$ (for $W_n(q)$
  defined by (\ref{formulaweaklyadma})) converges in
an open disc of radius strictly larger than $\omega^{-1}$.

This holds clearly for $W_6(q)$ which converges in the open disc
of radius the strictly positive root $0.6318\ldots>\omega^{-1}$
of $1-I_5=1-4q^5-5q^6-4q^7-3q^8-2q^9-q^{10}$.

Formula (\ref{formulaIn}) shows that coefficients of the rational fraction
$$\mathbf I_6=\sum_{j=6}^\infty (j-1)q^j=q^6\frac{5-4q}{(1-q)^2}$$
yield upper bounds on the coefficients of $I_n$ for $n\geq 6$.

The convergency radius of $\sum_{n=7}^\infty W_n(q)$
is thus at least as large as the convergency radius 
of the rational fraction
$$\sum_{n=7}^\infty\frac{\sum_{j=0}^\infty q^j}{1-\mathbf I_6}q^n\frac{\sum_{j=0}^\infty
q^j}{1-\mathbf I_6}=\frac{q^7}{(1-q)^3(1-\mathbf I_6)^2}$$
given by the positive root $0.6206\ldots>\omega^{-1}$ of 
the polynomial $(1-q)^2(1-\mathbf I_6)=1-2q+q^2-5q^6+4q^7$.
\end{proof}

\section{NSG-compositions with maximum $5$}\label{sectmax5}

The rational generating series
$W_5(q)$, given by Proposition \ref{propweaklyadmgenser} and
enumerating weakly admissible compositions of maximum $5$, 
involves $1-I_4(q)$ (accounting for
right compositions) in its denominator. Since $1-I_4$
has a root in $(0,\omega^{-1})$, the growth-rate of
weakly admissible compositions
of maximum $5$ exceeds $\omega$.
Obtaining useful upper bounds for NSG-compositions of maximum $5$
(trickier than obtaining the corresponding
results for NSG-compositions of maximum at least $6$, see
Section \ref{sectmax6}) requires thus
additional NSG-inequalities (\ref{fundeqsg}).

\begin{prop}\label{propmax5} There exists a strictly positive constant
  $\kappa_5<\omega$ such that NSG-compositions with maximum $5$ have
  growth-rate at most equal to $\max(\gamma_4,\kappa_5)$
  where $\gamma_4$ is the growth-rate of NSG-compositions with maximum $4$.
\end{prop}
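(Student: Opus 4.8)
The plan is to mimic the structure of the proof of Proposition \ref{propmaxgeq6}, but with two refinements forced by the fact that $1-I_4$ has a root inside $(0,\omega^{-1})$: (i) we do not content ourselves with the NSG-inequalities involving only the last maximal part $x_l=5$; we also retain a handful of the second-line NSG-inequalities (\ref{fundeqsg}) that constrain the right composition $x_{l+1}+\cdots+x_{m-1}$, and (ii) we isolate a ``bad'' subclass of NSG-compositions of maximum $5$ (those violating the technical condition alluded to in Section \ref{sectoutline}) and kick the can down the road, bounding them by NSG-compositions of maximum $4$, which accounts for the $\gamma_4$ in the statement.

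First I would fix a canonical pivot-part: among all parts equal to $5$, choose $x_p$ so that the right composition $x_{p+1}+\cdots+x_{m-1}$ has no part equal to $5$ (e.g. the last maximal part, defining the Frobenius number $4m+l$). Pivot-factorisation then splits a NSG-composition of maximum $5$ into a left composition with parts in $\{1,\dots,5\}$, the pivot $q^5$, and a right composition with parts in $\{1,\dots,4\}$. The left factor is bounded exactly as in Proposition \ref{propweaklyadmgenser}, contributing $(1+\sum_{i=3}^5 q^i)/(1-I_5)$, whose radius of convergence is the positive root $0.6318\ldots>\omega^{-1}$ of $1-I_5$, so the left side is harmless. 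The real work is the right composition: I would retain, besides the inequalities $x_{l+i}+x_{m-i}+1\geq x_{(l+i)+(m-i)-m}$ coming from the last maximal part, the additional second-line inequalities $x_s+x_t+1\geq x_{s+t-m}$ for those $s,t$ with $m-s,m-t>p$ and $m-s-t\geq p$; concretely, since the right composition has maximum $\leq 4$, a part of value $4$ in the right composition forces, via the $+1$, that \emph{its} neighbours at the appropriate indices sum to at least $3$, which is a genuine (weak) restriction once we have pinned down where the value-$4$ parts sit relative to $p$. The upshot is a generating-series upper bound of the form $L(q)\,q^5\,R(q)$ with $R(q)$ a series of the type covered by Lemma \ref{lemconvradius} — built from polynomials $A_n,B_n\in\mathbb N[q]$ satisfying linear recursions — and I would check by the same ``dominate $I_n$ by $\mathbf I_6$, then compute the root of an explicit polynomial'' computation as in Proposition \ref{propmaxgeq6} that its radius of convergence exceeds $\omega^{-1}$.

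The ``technical condition'' is what makes the right-composition bound go through: the retained second-line inequalities only bite when the value-$4$ parts of the right composition are positioned so that the indices $s,t$ landing on them satisfy $m-s-t\geq p$. Call a NSG-composition of maximum $5$ \emph{tame} if every part of value $4$ lying after the chosen pivot $x_p$ sits at such a position, and \emph{wild} otherwise. For tame compositions the argument above yields growth-rate $\leq\kappa_5<\omega$ for some explicit $\kappa_5$ (the reciprocal of the root just computed). For wild compositions, the offending value-$4$ part is ``close'' to the pivot in index; I would re-pivot at that value-$4$ part instead, thereby reducing the count of wild NSG-compositions of maximum $5$ to (a bounded number of bits of data times) the count of NSG-compositions of maximum $4$, whence growth-rate $\leq\gamma_4$. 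Taking the maximum of the two bounds gives the claim.

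The main obstacle is making the dichotomy tame/wild precise enough that (a) tame compositions really do satisfy enough retained NSG-inequalities to push the singularity of $R(q)$ past $\omega^{-1}$, and (b) wild compositions genuinely inject (up to bounded ambiguity) into NSG-compositions of maximum $4$ under re-pivoting. Part (a) is a bookkeeping-heavy but elementary interval-geometry argument — exactly the kind the introduction advertises — and part (b) requires checking that re-pivoting at a value-$4$ part of a maximum-$5$ NSG-composition produces a valid (weakly admissible) maximum-$4$ datum together with only finitely much extra information to reconstruct the original. Once both are in place, Lemma \ref{lemconvradius} and the product-of-radii remark finish the proof with $\kappa_5$ given explicitly as the reciprocal of the relevant polynomial root.
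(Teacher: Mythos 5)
Your architecture (pivot at the last part equal to $5$, keep a few extra second-line inequalities, finish with transfer matrices and Lemma \ref{lemconvradius}, and push a residual class onto the maximum-$4$ count) has the right flavour, but both halves of your tame/wild dichotomy have genuine gaps, and the quantitative one is fatal. The extra inequalities you retain from a part $x_j=4$ with $j>l$ only say $x_u+x_v\geq 3$ for $u+v=m+j$, while the pivot $x_l=5$ gives $x_u+x_v\geq 4$ for $u+v=m+l$. The union of these two matchings cuts the right composition into chains whose edges strictly alternate between a ``sum $\geq 4$'' and a ``sum $\geq 3$'' constraint, with spins in $\{1,2,3,4\}$. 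The corresponding two-step transfer matrix $T_{\geq 4}T_{\geq 3}$ (entries $q^t$ for admissible transitions $s\to t$) has, at $q=\omega^{-1}$, the strictly positive fixed vector $(1/2,\,1,\,\omega,\,\omega)$ — a direct check using $q+q^2=1$ and $q+2=\omega^2$ — hence spectral radius exactly $1$. So the chain partition functions do not decay at $\omega^{-1}$, Lemma \ref{lemconvradius} is not applicable, and no $\kappa_5<\omega$ comes out; even convergence of your bound at $\omega^{-1}$ would not suffice, since for non-rational series that does not imply growth-rate $<\omega$. This is exactly the borderline phenomenon the paper flags elsewhere (the remark after Proposition \ref{propspincyl} and the remark closing Section \ref{secttildeC}): ``one $5$ plus one $4$'' is critical. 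The paper escapes it by insisting on \emph{two} parts equal to $5$: pivoting at the second-last $5$, every chain edge carries the constraint ``sum $\geq 4$'', and the transfer matrix (\ref{formularecABC5}) has all eigenvalues of modulus $<1$ at $\omega^{-1}$ (largest $\approx 0.9143$); the positional issue you try to handle with 4-parts is instead disposed of beforehand by Proposition \ref{propmax5largeFrob} via the grouping $1-I_5I_4^2$.

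The wild half is also unsupported: re-pivoting at a value-$4$ part of a maximum-$5$ NSG-composition does not produce a maximum-$4$ NSG-composition (the parts equal to $5$ are still there), and you give no injection with controlled multiplicity into maximum-$4$ objects; the ``bounded number of bits of data'' is not substantiated. In the paper, $\gamma_4$ enters only through the case of a \emph{unique} part equal to $5$ (Proposition \ref{propuniquemaximum5}): that part sits at the Frobenius position, so decreasing it to $4$ (i.e. adding the Frobenius number to the semigroup) yields a genuine maximum-$4$ NSG-composition, each such composition arising fewer than $g$ times, whence the bound by coefficients of $q^2G_4'$, a polynomial (not bounded) multiplicity which is harmless because differentiation preserves growth-rates. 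So the correct dichotomy is ``exactly one part equal to $5$'' versus ``at least two parts equal to $5$'' (Propositions \ref{propuniquemaximum5} and \ref{propmax5twoparts}), not tame/wild as you define it; as written, neither branch of your argument closes.
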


Proposition \ref{propmax5} follows trivially from the
two following results:

\begin{prop}\label{propuniquemaximum5} The growth-rate of 
NSG-compositions with maximum $5$ having a unique maximal part is at
most equal to the growth-rate $\gamma_4$ of NSG-compositions
with maximum $4$.
\end{prop}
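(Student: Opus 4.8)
The plan is to handle NSG-compositions of maximum $5$ with a unique maximal part $x_l=5$ by a pivot-factorisation at the pivot-index $l$, exactly as in the proof of Proposition \ref{propweaklyadmgenser}, but retaining \emph{more} NSG-inequalities than weak admissibility uses, so as to push the denominators' smallest roots past $\omega^{-1}$. Since the maximal part is unique, a right part $x_{l+i}$ with $1\le i$ satisfies $x_{l+i}\le 4$, and the first-line NSG-inequalities force additional constraints among the left parts $x_1,\dots,x_{l-1}$ (all $\le 5$), while the second-line inequalities (with the $+1$) constrain the right parts. The key is to notice that because $x_l=5$ is strictly larger than all other parts, the pairs $(x_i,x_{l-i})$ on the left with $x_i+x_{l-i}\ge 5$ cannot \emph{all} be $(1,4)$ or $(4,1)$ type configurations without triggering a further inequality: in particular a left part equal to $4$ adjacent (in index-sum) to another forces $x_{i+j}\ge$ something, and iterating this, one sees the generating polynomial governing admissible left-pairs is strictly dominated by $I_5$. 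I would make this precise by writing down the refined transfer polynomial $J_5^{\mathrm{left}}(q)$ (resp. $J_4^{\mathrm{right}}(q)$) obtained from $I_5$ (resp. $I_4$) by deleting the monomials corresponding to pair-configurations that violate a secondary NSG-inequality, and then bound the count of such compositions by
\begin{align*}
\frac{1+\sum_{i=3}^{5}q^i}{1-J_5^{\mathrm{left}}}\, q^5\, \frac{1+\sum_{i=2}^{4}q^i}{1-J_4^{\mathrm{right}}}.
\end{align*}

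The main step is then to verify that $1-J_5^{\mathrm{left}}$ and $1-J_4^{\mathrm{right}}$ have no root in $(0,\omega^{-1}]$. For the left factor this should already be true for $1-I_5$ itself (its smallest positive root is $0.6318\ldots>\omega^{-1}$, as recorded in the proof of Proposition \ref{propmaxgeq6}), so the left side contributes a growth-rate strictly below $\omega$ with room to spare; the only real work is the right factor, where $1-I_4$ genuinely has a root below $\omega^{-1}$. There I would argue that after removing the offending monomials --- precisely those pair-configurations $(a,b)$ with $a+b\ge 4$, $a,b\le 4$ whose presence together with the pivot $5$ and the second-line inequality $x_{m-s-t}\le x_{m-s}+x_{m-t}+1$ is inconsistent with all right parts being $\le 4$ --- the resulting polynomial $J_4^{\mathrm{right}}$ evaluated at $\omega^{-1}$ is strictly less than $1$. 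Equivalently, I would exhibit a real $\rho_0\in(\omega^{-1},1)$ with $J_4^{\mathrm{right}}(\rho_0)<1$ and $J_5^{\mathrm{left}}(\rho_0)<1$, and invoke the elementary fact (stated at the end of Section \ref{sectgen}) that the convergency radius of the product is at least the minimum of the factors' radii.

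The expected obstacle is the bookkeeping for the right compositions: the $+1$ in the second line of (\ref{fundeqsg}) means a right part can be as large as $x_l-1=4$, so the naive pair-polynomial is exactly $I_4$, and one must identify a genuinely forbidden sub-family of pair-configurations whose removal is enough to beat $\omega^{-1}$ but which one can still \emph{prove} is forbidden for every NSG-composition of this shape. I anticipate that the clean way to do this is to observe that if $x_{l+i}=x_{m-i}=4$ for some $i$ with $l+i,m-i>l$ and $l+i+m-i=m+l$, then applying the second-line inequality with a suitable smaller index (using that $x_l=5$ is the unique maximum and all indices between $l+1$ and $m-1$ carry parts $\le 4$) produces a contradiction, so no right pair $(4,4)$ can occur, and similarly one controls $(4,3)$ and $(3,4)$; deleting just the $(4,4)$ term from $I_4$ already lowers the smallest positive root of $1-(\,\cdot\,)$ enough, which one checks numerically. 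Once that single combinatorial exclusion is justified, the analytic part is routine, and combining the left and right bounds with Proposition \ref{propweaklyadmgenser}'s factorisation scheme gives the desired $\kappa_5<\omega$.
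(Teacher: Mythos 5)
Your plan hinges on a combinatorial exclusion that is false, and on a numerical claim that does not hold. The pair $(4,4)$ \emph{can} occur as a right pair with respect to the unique maximal pivot $x_l=5$: take $m=8$ and the composition $5+4+2+4+4+2+4$. One checks directly that all inequalities (\ref{fundeqsg}) are satisfied (the tightest ones are $x_1=5\leq x_3+x_6+1=5$, $x_6=2\leq 2x_3=4$, $x_4=4\leq 2x_6+1=5$, $x_7=4\leq x_3+x_4=6$), so this is an NSG-composition of genus $25$ with unique maximal part $x_1=5$, and its right pairs $(x_2,x_7)$ and $(x_4,x_5)$, whose indices sum to $m+l=9$, are both $(4,4)$; a small variant ($5+4+2+4+3+2+4$) shows that $(4,3)$ and $(3,4)$ occur as well. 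Moreover, even if the $(4,4)$ exclusion were legitimate it would not ``beat'' $\omega^{-1}$: $I_4(\omega^{-1})=10-4\sqrt{5}\approx 1.0557$ and $\omega^{-8}\approx 0.0213$, so $\bigl(I_4-q^8\bigr)(\omega^{-1})\approx 1.0344>1$ and the smallest positive root of $1-(I_4-q^8)$ still lies below $\omega^{-1}$; you would additionally need to forbid $(4,3)$ and $(3,4)$, which the example above rules out. So there is no small set of forbidden pair-types that rescues a factorised bound of the shape you propose: pairwise (weak-admissibility) constraints relative to the pivot are simply too weak on the right side when the maximum is $4$ or $5$, which is precisely why the paper resorts to the heavier graph/lane/spin-model machinery --- and that machinery needs at least two (for maximum $5$) or three (for maximum $4$) maximal parts, which are unavailable here.

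There is also a mismatch with the statement itself: the proposition claims a bound by $\gamma_4$, the growth-rate of NSG-compositions of maximum $4$, not a bound strictly below $\omega$ (the inequality $\gamma_4<\omega$ is only established later, via Proposition \ref{propmax4} and Theorem \ref{thmradiustildeC}), so even a repaired version of your argument would prove a different assertion than the one to be shown. The intended proof is much softer and uses the tree structure rather than a pivot-factorisation: if $x_l=5$ is the unique maximal part, the Frobenius number is $f=4m+l$, and adjoining $f$ to the semigroup replaces $x_l=5$ by $4$, producing an NSG-composition of maximum $4$ and genus $g-1$. This map is at most $(m-1)$-to-one, hence at most $g$-to-one, so the number of genus-$g$ NSG-compositions with a unique maximal part $5$ is bounded by the corresponding coefficient of $q^2G_4'$, where $G_4$ enumerates NSG-compositions of maximum $4$; since differentiation does not change the growth-rate, the bound by $\gamma_4$ follows. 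I recommend abandoning the refined-transfer-polynomial route for this particular proposition and using this reduction.
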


\begin{prop}\label{propmax5twoparts} The growth rate of
  NSG-compositions having at least two maximal parts of size $5$
  is strictly smaller than $\omega$.
\end{prop}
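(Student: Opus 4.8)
The plan is to exploit the presence of (at least) two parts equal to the maximum $5$ in order to impose \emph{more} NSG-inequalities than the single pivot-inequality used for weakly admissible compositions, and thereby push the growth-rate below $\omega$. Concretely, suppose $x_1+\cdots+x_{m-1}$ is an NSG-composition with $\max(x_j)=5$ and let $x_p$ be the \emph{first} maximal part and $x_l$ the \emph{last} maximal part, $p<l$ (if there were only one maximal part we would be in the situation of Proposition \ref{propuniquemaximum5}, which is excluded). I would pivot at the last maximal part $x_l$, as in the proof of Proposition \ref{propweaklyadmgenser}: this gives a left composition $x_1+\cdots+x_{l-1}$ with parts $\le 5$ and a right composition $x_{l+1}+\cdots+x_{m-1}$ with parts $\le 4$. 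For the right composition the bound from $1/(1-I_4)$ already applies (as in $W_5$), and $I_4(\omega^{-1})\approx 1.0557$ is only barely above $1$, so the only real problem is the left composition, whose naive bound $1/(1-I_5)$ has $I_5(\omega^{-1})\approx 0.8754$ — already fine! Wait: it is precisely the \emph{right} factor $1/(1-I_4)$ that blows the growth-rate past $\omega$ in $W_5$. So the key is to gain extra constraints on the right composition from the existence of a \emph{second} maximal part $x_p=5$ to the left of $x_l$.

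The extra constraints come from the second line of (\ref{fundeqsg}) applied with indices straddling both $x_p$ and $x_l$, together with first-line inequalities $x_s+x_t\ge x_{s+t}$ with $s+t\le l$ that now have $x_l=5$ on the right-hand side \emph{twice} (once for $s+t=p$, say, giving information tying the left part near $x_p$ to $x_p=5$, but more usefully for $s+t=l$). The cleanest route: since $x_p=x_l=5$ and $p<l$, the first-line inequality with $s=l-p$, $t=p$ gives $x_{l-p}+x_p\ge x_l$, i.e. $x_{l-p}+5\ge 5$, which is vacuous — so this particular pair is useless, and I must instead extract constraints on the \emph{right} composition. For $i$ with $l+i<m$ and $l+i-p$ still $>p$ (so that the corresponding class is governed by a part strictly between the two maximal parts or in the right composition), the second-line inequality $x_{m-s}+x_{m-t}+1\ge x_{m-s-t}$ with $m-s=m-i$ and $m-t=m-(l+i-?)$ chosen so that $m-s-t=l$ forces $x_{l+i}+x_{(\text{something})}+1\ge x_l=5$, i.e. $x_{l+i}+x_{(\cdot)}\ge 4$. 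Doing this for \emph{two} different choices made available by having two maximal parts is what should bring the effective right-hand polynomial down from $I_4$ to something with value $<1$ at $\omega^{-1}$; I would organise the bookkeeping so that the right composition of length $r$ contributes a factor bounded by $1/(1-J)$ where $J(\omega^{-1})<1$, the gain over $I_4$ coming from one forced pair being pinned into a smaller range.

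The concrete mechanics: set up a transfer-matrix / generating-series count exactly parallel to the proof of Proposition \ref{propweaklyadmgenser}, but now the pivot-factorisation is governed by the ordered pair $(x_p,x_l)=(5,5)$. The left factor enumerates $x_1+\cdots+x_{p-1}$ (parts $\le 5$, first-line inequalities with sums $\le p$), then $x_p=5$, then $x_{p+1}+\cdots+x_{l-1}$ (parts $\le 5$, but now every such part $x_j$ with $p<j<l$ satisfies \emph{both} $x_j\le x_{j-p}+x_p$ — vacuous — and, via the mirrored inequality straddling $x_l$, a nontrivial lower-bound-in-disguise), and the right factor enumerates $x_{l+1}+\cdots+x_{m-1}$ with the extra second-line constraints described above. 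I would then bound each factor's series by a rational function with non-negative coefficients (using Formula (\ref{formulaIn}) for $I_n$ and an explicit polynomial $J$ for the constrained right count), check by direct evaluation that every denominator's smallest positive root exceeds $\omega^{-1}$ — this is the one genuine computation, analogous to the $0.6318\ldots$ and $0.6206\ldots$ checks in the proof of Proposition \ref{propmaxgeq6} — and invoke the "convergency radius of a finite product is at least the minimum over factors" remark to conclude growth-rate $<\omega$. The main obstacle I anticipate is purely combinatorial rather than analytic: identifying \emph{which} second-line NSG-inequalities survive the pivot-factorisation and correctly tracking, through the mirror map $j\mapsto m-j$, how the two maximal parts at positions $p$ and $l$ translate into usable constraints on the right composition without double-counting or claiming inequalities that require indices outside $\{1,\ldots,m-2\}$. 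Once the right set of inequalities is pinned down, the series estimates and the root checks should go through routinely, in complete analogy with Sections \ref{sectweakly} and \ref{sectmax6}.
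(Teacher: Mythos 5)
Your overall instinct --- that the second maximal part must be used to impose extra second-line inequalities on the part of the composition following the last maximum --- is the right one, but as written the argument has two genuine gaps. First, the quantitative mechanism is wrong: the extra constraints coming from a second maximal part $x_p=5$ are of the form $x_i+x_j\geq 4$ for pairs with $i+j=m+p$, and such a pair ties a member of one $x_l$-pair to a member of a \emph{different} $x_l$-pair. The pairs are therefore not independent, so no bound of the shape $1/(1-J)$ with a single polynomial $J$ evaluated pairwise can be correct; the two pairings chain the indices $\{$pivot$+1,\ldots,m-1\}$ into \emph{paths}, and the gain over $I_4$ only appears as a per-vertex effect along long paths, via the transfer matrix of the constrained spin model with spins in $\{1,2,3,4\}$ and adjacent sums $\geq 4$ (whose spectral radius at $q=\omega^{-1}$ is $0.9143<1$, cf. (\ref{formularecABC5})), together with Lemma \ref{lemconvradius}. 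In particular, ``one forced pair being pinned into a smaller range'' can never do the job: a bounded number of additional constraints changes the count by at most a bounded factor and cannot lower an exponential growth rate.

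Second, your choice of the \emph{first} and last maximal parts, with pivot at the last one, breaks down exactly in the dangerous regime. If $p\ll l$, the pairs $i+j=m+p$ with both indices in $\{l+1,\ldots,m-1\}$ only reach up to $i\leq m+p-l-1$, so the final $l-p$ indices of the right composition receive no second constraint at all and still contribute $I_4$ per pair, with $I_4(\omega^{-1})>1$; and even where both pairings act, the resulting paths have length of order $(m-l)/(l-p)$, which is short when the two chosen maxima are far apart. The paper avoids both problems by (i) splitting off the case $3l\geq m-1$, which is handled by a separate balancing argument (Proposition \ref{propmax5largeFrob}, grouping $I_5I_4^2$, not using the second maximum at all), and (ii) in the remaining case $k<l<(m-1)/3$ pivoting at the \emph{second-last} maximal part $x_k$ and using the two \emph{last} maxima, which guarantees that the ordinary path components have length at least $5$, so that the transfer-matrix decay applies; exceptional components through the fixed points $(k+m)/2,(l+m)/2$ and the correction for $x_l=5\notin\{1,2,3,4\}$ are then handled by explicit bounded factors. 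Without a case distinction of this kind and without the path/transfer-matrix analysis, the series you propose does not converge at $\omega^{-1}$ uniformly over all admissible positions of the two maximal parts, and the proof does not close.
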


\begin{rem} We will show later that the constant $\gamma_4$ of
  Proposition \ref{propmax5} satisfies the inequality
  $\gamma_4<\omega$ (see Proposition \ref{propmax4} and
  Theorem \ref{thmradiustildeC}).
\end{rem}

\subsection{Proof of Proposition \ref{propuniquemaximum5}}

\begin{proof}[Proof of Proposition \ref{propuniquemaximum5}] 
A unique maximal part $x_l=5$ of size
$5$ corresponds to the Frobenius number $f=4m+l$
of such a NSG-composition $x_1+\cdots+x_{m-1}$.
Adding the Frobenius element $f$ to the associated numerical
semigroup amounts to replacing
$x_l=5$ by $x_l=4$ and results in a NSG-composition
with maximum $4$ (and genus decreased by $1$).
Such reductions yield any given NSG-composition
of genus $g$ and maximum $4$ less than $g$ times
(since $m\leq g+1$ with equality only for $1+1+\ldots+1$).
Numbers of NSG-compositions with a unique maximal
part of size $5$ are thus bounded by coefficients of $q^2G_4'$ where 
$G_4$ is the generating series for all NSG-compositions with maximum $4$.
The result follows by observing that coefficients of $G_4$ and
of its derivative $G'_4$ have identical growth-rates.
\end{proof}

\subsection{Proof of Proposition \ref{propmax5twoparts}}

\begin{prop}\label{propmax5largeFrob} NSG-compositions
  $x_1+\cdots+x_{m-1}$ with a last maximal part $x_l=5$
  such that $3l\geq m-1$ have growth rate at most equal to $1/\rho<\omega$
  where $\rho=0.6189\ldots>\omega^{-1}$
  is the positive root of $1-I_5I_4^2$ for
  $I_n$ given by (\ref{formulaIn}).
\end{prop}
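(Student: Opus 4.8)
The plan is to run the pivot-factorisation machinery of Section \ref{sectpivot} with the last maximal part $x_l = 5$ as pivot, but now exploiting the extra geometric room created by the hypothesis $3l \geq m-1$. First I would recall that for such an NSG-composition the left composition $x_1 + \cdots + x_{l-1}$ satisfies $x_i + x_{l-i} \geq x_l = 5$ for every $i$ with $2i < l$, and that these pairs are counted by powers of $I_5$, together with a middle factor $1 + \sum_{i=3}^{5} q^i$ when $l$ is even — this is exactly the left factor $(1 + \sum_{i=\lceil \mu/2\rceil}^{\mu} q^i)/(1 - I_5)$ appearing in Proposition \ref{propweaklyadmgenser} for $\mu = 5$. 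The point of the condition $3l \geq m-1$ is that the right composition $x_{l+1} + \cdots + x_{m-1}$ has at most $m-1-l \leq 2l$ parts, which are short enough that we can pair them up against indices $\leq l$ in a way that lets us impose \emph{two} relevant families of NSG-inequalities on them rather than one. Concretely, for each index $j$ in the right block I would use a second-line inequality of the form $x_{l+i} + x_{m-i} + 1 \geq x_{l+i+(m-i)-m} = x_l = 5$ wherever the indices fit inside $\{1,\dots,m-2\}$ with $s+t \geq l + m$ and also $s+t - m \geq $ a suitable positive index, so that right-block pairs are each forced to sum to at least $4$ (the $+1$ in the second line weakening $5$ to $4$); these are counted by $I_4$. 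Because of $3l \geq m-1$, the $m-1-l$ right parts split into roughly two interleaved groups each of size $\leq l$, so the right block is governed by a factor whose denominator is $1 - I_4^2$ rather than $1 - I_4$.

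Assembling these pieces, the generating series enumerating (a superset of) NSG-compositions with last maximal part $x_l = 5$ and $3l \geq m-1$ is bounded coefficientwise by a rational fraction of the shape
\[
\frac{N_L(q)}{1 - I_5(q)}\; q^5\; \frac{N_R(q)}{1 - I_5(q)\,I_4(q)^2}
\]
for suitable numerator polynomials $N_L, N_R \in \mathbb{N}[q]$ coming from the even-length middle-part corrections and from the leftover single part when $m-1-l$ is not a multiple of the relevant period. Actually, since $I_4 \leq I_5$ coefficientwise, the cruder bound with denominator $1 - I_5 I_4^2$ throughout dominates, and what matters is that the smallest positive root of $1 - I_5 I_4^2$ controls the convergency radius. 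By the table of values in Section \ref{sectweakly} one has $I_5(\omega^{-1}) = 21 - 9\sqrt{5} \approx 0.8754$ and $I_4(\omega^{-1}) = 10 - 4\sqrt 5 \approx 1.0557$, so $I_5 I_4^2$ at $\omega^{-1}$ is about $0.9757 < 1$; hence $1 - I_5 I_4^2$ does not vanish on $[0, \omega^{-1}]$, and by continuity and monotonicity (all coefficients of $I_5 I_4^2$ are non-negative) its least positive root $\rho$ satisfies $\rho > \omega^{-1}$. A direct numerical evaluation gives $\rho = 0.6189\ldots$, and since $N_L, N_R$ and $1 - I_5$ are polynomials, resp. have larger root, the whole fraction converges on the open disc of radius $\rho$; thus by the remark at the end of Section \ref{sectgen} the growth-rate of this family of NSG-compositions is at most $1/\rho < \omega$.

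The main obstacle I anticipate is the careful bookkeeping in the right block: verifying that the hypothesis $3l \geq m-1$ genuinely lets one impose two independent families of second-line NSG-inequalities on the right parts, so that the $I_4^2$ (rather than $I_4$) in the denominator is legitimate, and handling the boundary indices near $x_{l+1}$ and near $x_{m-1}$ where the paired index may fall outside $\{1,\dots,m-2\}$ — these leftover parts must be absorbed into the numerator polynomial $N_R$ without degrading the root $\rho$. The left block is routine, being identical to the $\mu = 5$ computation already done for $W_5$ in Proposition \ref{propweaklyadmgenser}. Once the coefficientwise domination by the displayed rational fraction is established, the analytic conclusion is immediate from the non-vanishing of $1 - I_5 I_4^2$ on $[0,\omega^{-1}]$, which is a one-line check using the tabulated values of $I_4(\omega^{-1})$ and $I_5(\omega^{-1})$.
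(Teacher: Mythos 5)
Your final rational bound $\frac{1}{(1-I_5)(1-I_5I_4^2)}$ and the concluding analytic step ($I_5(\omega^{-1})I_4(\omega^{-1})^2\approx 0.976<1$, hence the root $\rho=0.6189\ldots>\omega^{-1}$) coincide with the paper's, but the route by which you arrive at the factor $1-I_5I_4^2$ has a genuine gap, and it is exactly the point you flag yourself as the ``main obstacle''. With a single pivot $x_l$ there is only \emph{one} family of second-line inequalities available on the right block: each right index $l+i$ is paired with its unique partner $m-i$, giving $x_{l+i}+x_{m-i}\geq 4$ and hence one factor $I_4$ per pair --- there is no second, interleaved family (a second family would require a second maximal part, which is what the graph $\Gamma$ in the proof of Proposition \ref{propmax5twoparts} exploits, not this proposition). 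So your intermediate claim that the right block is ``governed by $1-I_4^2$'' is unsupported; and even if it were true it would prove nothing, since $1-I_4^2=(1-I_4)(1+I_4)$ has the same positive root as $1-I_4$, which lies below $\omega^{-1}$ because $I_4(\omega^{-1})>1$. The subsequent passage to the denominator $1-I_5I_4^2$ rests on the assertion that $I_4\leq I_5$ coefficientwise, which is false ($I_4$ has the term $3q^4$ while $I_5$ begins at $q^5$), and it misattributes the $I_5$ to the right block.

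The correct use of the hypothesis $3l\geq m-1$ is quantitative, not qualitative: it bounds the \emph{number} of right pairs by (roughly) twice the number of left pairs. Writing $a=\lfloor (l-1)/2\rfloor$ for the number of left pairs (each counted by $I_5$) and $b=\lfloor (m-1-l)/2\rfloor$ for the number of right pairs (each counted by $I_4$, via the single family of inequalities above), the hypothesis gives $b\leq 2a$ up to a bounded error, so the whole family is dominated, up to polynomial factors, by $\sum_{a\geq 0}\sum_{b=0}^{2a}I_5^aI_4^b$. Regrouping $b=2c+e$ with $e\in\{0,1\}$ and $c\leq a$ turns each term into $(I_5I_4^2)^cI_5^{a-c}I_4^e$, whence the bound $(1+I_4)\bigl((1-I_5I_4^2)(1-I_5)\bigr)^{-1}$; only at this stage does $I_5I_4^2$ legitimately appear, as a pairing of each pair of right-block factors $I_4^2$ with a left-block factor $I_5$, and the conclusion follows since this product evaluates below $1$ at $\omega^{-1}$. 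Your left-block analysis and the final numerics are fine; what is missing is precisely this counting argument $b\leq 2a$ and the regrouping of the double sum, without which the displayed fraction is not a proven coefficientwise upper bound.
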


\begin{proof}[Proof of Proposition \ref{propmax5largeFrob}]
  Choosing the index $l$ of the last maximal part $x_l=5$
  in such a NSG-composition as a pivot,
the proof of Proposition \ref{propweaklyadmgenser} shows that the
number of such NSG-compositions of genus $g$ (with multiplicity $m$ and
last maximal part $x_l=5$) is 
bounded by the coefficient of $q^g$ in
$$(1+q^3+q^4+q^5)(1+q^2+q^3+q^4)
I_5^{\lfloor (l-1)/2)\rfloor}I_4^{\lfloor (m-1-l)/2\rfloor}$$
where
\begin{align*}
I_4&=3q^4+4q^5+3q^6+2q^7+q^8,\\
I_5&=4q^5+5q^6+4q^7+3q^8+2q^9+q^{10}
\end{align*}
(see Formula (\ref{formulaIn})).

The inequality $3l\geq m-1$ implies that $(m-1-l)/2-1$ is at most 
twice as large as $(l-1)/2$.

Neglecting polynomial factors, we have reduced the proof of
Proposition \ref{propuniquemaximum5} to the study of the 
convergency radius of
\begin{align}\label{formcvgncy5}
  &\sum_{a=0}^\infty \sum_{b=0}^{2a}I_5^aI_4^b\ .
\end{align}

Rewriting (\ref{formcvgncy5}) by regrouping $I_5I_4^2$
we can work with
$$\frac{1}{(1-I_5I_4^2)(1-I_5)}$$ 
(up to neglecting a polynomial factor)
which converges on the open disc of radius $\rho>\omega^{-1}$
the strictly positive root $0.6189\ldots$ of $1-I_5I_4^2$.
\end{proof}

\begin{proof}[Proof of Proposition \ref{propmax5twoparts}]
  Given a NSG-composition $x_1+\cdots+x_{m-1}$ having at least
  two maximal parts of size $5$
let $k$ and $l$ with $1\leq k<l<m$ be the indices of the two last maximal
parts $x_k=x_l=5$.

The result holds by Proposition \ref{propmax5largeFrob} for
all NSG-compositions such that $3l\geq m-1$.

We are now left with the case of NSG-compositions $x_1+\cdots+x_{m-1}$
with indices of the two last maximal parts satisfying
$k<l<(m-1)/3$.

We give again a factorised upper bound for all such NSG-compositions.

We use $\frac{1+q^3+q^4+q^5}{1-I_5}$ (see the proof of Proposition
\ref{propweaklyadmgenser}) for upper bounds on numbers of left
compositions with respect to the pivot-part $x_k$ given by
the second-last maximum. (This works since coefficients of
$1/(1-I_5)$ have growth-rate strictly smaller than $\omega$.)

In order to get upper bounds for right compositions, we introduce
a graph $\Gamma$ with vertices $k+1,k+2,\ldots,m-1$ and edges
$\{i,j\}$ if $i+j-m\in\{k,l\}$.
The graph $\Gamma$ is a union of paths (trees with at most two leaves
and interior vertices of degree $2$). A connected component of $\Gamma$ is
\emph{ordinary} if it contains no endpoint in $\{(k+m)/2,(l+m)/2\}
\cap \mathbb N$. It is \emph{exceptional} otherwise. Ordinary connected
components of $\Gamma$ have an even number of vertices
and contain a central edge.

\begin{figure}[h]\label{figdihex}
\epsfysize=10cm
\center{\epsfbox{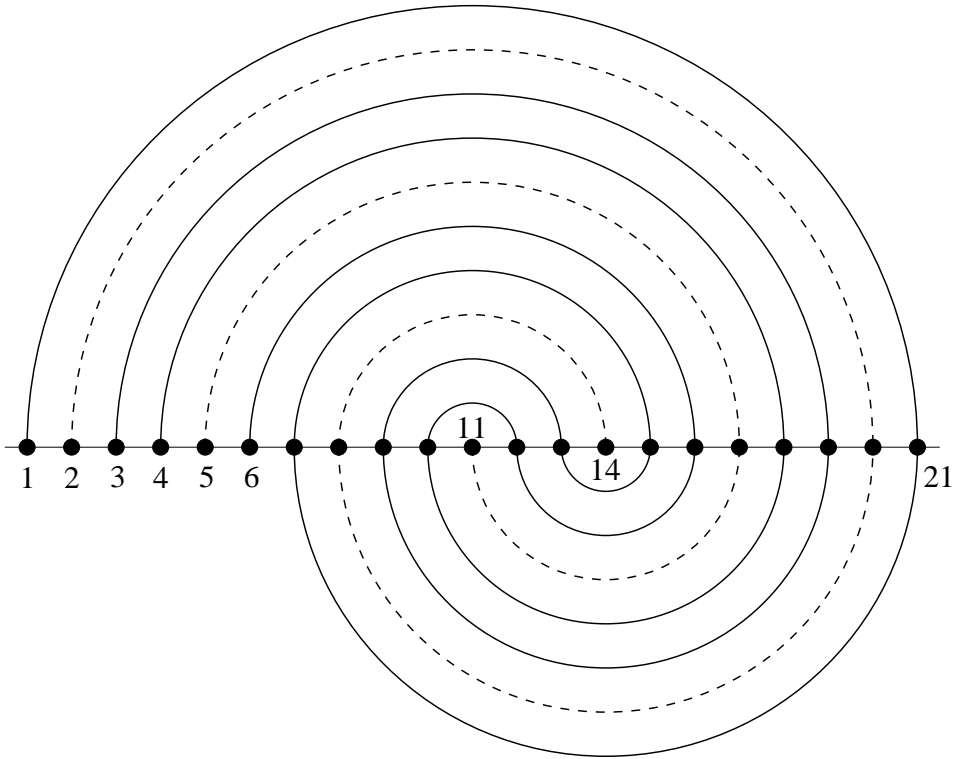}}
\caption{An example of a graph $\Gamma$.}
\end{figure}

Figure 1
shows an example with vertices $k+1,k+2,\ldots$
simply numbered $1,2$, where where $m=k+22$ and $l=k+6$. Edges $\{i,j\}$ with
$i+j=k+m$, respectively $i+j=l+m$,
are represented by upper, respectively lower, half-circles.
Edges of exceptional
components are dashed. (The horizontal coordinate-axis is not part of
$\Gamma$.)

The first $l-k$ vertices $k+1,k+2,\ldots,l$ of $\Gamma$ are leaves of
all connected components of $\Gamma$.

Ordinary connected components have length $2n\pm 1$ for some integer $n$
(depending on $k,l,m$). Exceptional components have length $n$
or $n-1$.

The construction of $\Gamma$ implies
$x_i+x_j\geq 4$ if $\{i,j\}$ is an edge of $\Gamma$
associated to a NSG-composition $x_1+\ldots+x_{m-1}$ as above.
Generating functions of parts supported by connected components
(maximal paths) of $\Gamma$ can be considered as partition-functions
of constrained spin models with spins $\{1,2,3,4\}$
such that all pairs of adjacent spins have sum at least $4$.

We can compute the generating series (partition-functions) of
this model on paths by an easy application of the transfer-matrix method
originating in statistical physics
and widely used in enumerative combinatorics,
see e.g. Chapter 4.7 of \cite{StI}. (Details are easy and
can be understood without prior knowledge of spin models.)

We denote by $A_n,B_n,C_n$ the generating series (with respect
to the weight $q^{s_0+s_1+\ldots+s_{n}}$) for sequences $(s_0,s_1,\ldots,
s_n)\in\{1,2,3,4\}^{n+1}$ of length
$n+1$ with coefficients in $\{1,2,3,4\}$ such that $s_i+s_{i+1}\geq 4$
for $i=0,\ldots,n-1$ and ending with $s_n=1$ for elements counted by $A_n$,
$s_n=2$ for $B_n$ and $s_n\in \{3,4\}$ for $C_n$.
We have $A_0=q,B_0=q^2,C_0=q^3+q^4$ and
\begin{align}\label{formularecABC5}
\left(\begin{array}{c}A_n\\B_n\\C_n\end{array}\right)&=
\left(\begin{array}{ccc}0&0&q\\0&q^2&q^2\\q^3+q^4&q^3+q^4&q^3+q^4
\end{array}\right)^n\left(\begin{array}{c}A_0\\B_0\\C_0
\end{array}\right)\ ,
\end{align}
by an easy induction.
The transfer-matrix involved in (\ref{formularecABC5})
has eigenvalues $-0.3981,0.2476,0.9143$ at $q=\omega^{-1}$.
This shows that the evaluations $P_n(\omega^{-1})$ of
the polynomials $P_n=A_n+B_n+C_n$ decay
exponentially fast to $0$. The evaluations $P_n(\omega^{-1})$
are strictly smaller than $1$ for $n\geq 3$.

Lemma \ref{lemconvradius} implies that the series
\begin{align}\label{seriesmax5}
  &\left(1+\sum_{n=0}^\infty P_n\right)^2q
   \left(1+\sum_{n=3}^\infty
   \frac{P_{2n-1}+P_{2n+1}}{(1-P_{2n-1})(1-P_{2n+1})}\right)
\end{align}
converges on an open disc of radius strictly larger than $\omega^{-1}$.

Coefficients of (\ref{seriesmax5}) are upper bounds on the number
of possibilities for right compositions of NSG-compositions
with pivot-part the second last maximum $x_k=5$:
The squared first factor has convergency radius strictly larger than
$\omega^{-1}$ by Lemma \ref{lemconvradius}
and accounts for perhaps existing exceptional
components, the isolated factor $q$ corrects for the fact that
$x_l=5\not\in\{1,2,3,4\}$. The final factor (which converges
for $q$ slightly larger than $\omega^{-1}$, see Lemma \ref{lemconvradius})
yields a coarse upper
bound for contributions coming from ordinary components of $\Gamma$.
The summand $1$ of the final factor
is needed for NSG-compositions with graphs reduced to
exceptional components. The sum of the final factor starts at $n=3$ since the
inequality $l<(m-1)/3$ ensures length at least $5$ for
ordinary components of $\Gamma$.
The numerator $P_{2n-1}+P_{2n+1}$ (necessary for convergency)
is due to the assumed existence of at least one ordinary component
of length $2n-1$ or $2n+1$.
(NSG-compositions without ordinary components in the associated graph
$\Gamma$ are accounted for by the
summand $1$ of the last factor, see above).
\end{proof}

\section{NSG-compositions with maximum $4$}\label{sectmax4}

This section contains a proof of the following result:

\begin{prop}\label{propmax4} We assume the inequality
  $\tilde \gamma<\omega$ for the growth-rate of
  the generating series $\tilde C(q)$
  enumerating NSG-compositions of maximum 3 ending with a
  maximal part.

  There exists then a positive constant
  $\kappa_4$ strictly smaller than $\omega$ such that
  NSG-compositions of maximum $4$ have growth-rate at most equal
  to 
$\max(\kappa_4,1/\rho,\sqrt[3]{\tilde\gamma\omega^2})$
  where $\rho=0.71667\ldots>\omega^{-1}$ is the positive root of
  $1-(2q^3+q^4)$. 
\end{prop}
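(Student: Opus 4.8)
The plan is to classify NSG-compositions $x_1+\cdots+x_{m-1}$ of maximum $4$ according to the position of their last maximal part $x_l=4$ relative to $m$, exactly as was done for maximum $5$ in Section \ref{sectmax5}, and to obtain a factorised upper bound for each regime whose growth-rate can be read off. Pick the index $l$ of the last maximal part $x_l=4$ as pivot, so that a composition splits as a left composition $x_1+\cdots+x_{l-1}$, the pivot-part $x_l=4$, and a right composition $x_{l+1}+\cdots+x_{m-1}$ with all parts in $\{1,2,3\}$. The left composition will be controlled by a transfer-matrix series built from the NSG-inequalities of the first line of (\ref{fundeqsg}) with $s+t\le l$: parts in $\{1,2,3,4\}$ with adjacent pairs symmetric about $l/2$ summing to at least $4$. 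One checks (via Proposition \ref{propweaklyadmgenser} restricted to the left factor, i.e. $(1+\sum_{i=2}^4 q^i)/(1-I_4)$) that this left factor has growth-rate strictly below $\omega$, since $1-I_4$ has its smallest positive root $0.7148\ldots>\omega^{-1}$. So the left side is never the bottleneck and contributes a constant $\kappa_4<\omega$ (absorbing also the genus-$g$/multiplicity bookkeeping, as in Proposition \ref{propuniquemaximum5}).

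The right composition is where the three terms in the bound arise. As in the proof of Proposition \ref{propmax5twoparts}, introduce the graph $\Gamma$ on vertices $l+1,\ldots,m-1$ with edges $\{i,j\}$ whenever $i+j-m\in\{\text{indices of maximal parts}\}$ — in the simplest case just $i+j=l+m$ — and use the second-line NSG-inequalities $x_i+x_j+1\ge x_{i+j-m}$, which for parts in $\{1,2,3\}$ force only the mild constraint $x_i+x_j\ge 2$, i.e. essentially no constraint beyond the $\{1,2,3\}$ range, except where an edge lands on the position of a genuine maximal part $3$ reached from $x_l=4$ (the children-of-the-pivot phenomenon from Section \ref{secttree}). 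Regime 1: when the Frobenius index is large, $3l\ge m-1$, the right side is short relative to the left and a crude bound of the shape $(1-(2q^3+q^4))^{-1}$-type series applies, giving the root $\rho=0.71667\ldots$ of $1-(2q^3+q^4)$ — here $2q^3+q^4$ counts the two ways to have an adjacent $\{1,2,3\}$-pair forced to contain a part reaching back toward the pivot value, with $q^4$ the cost of a blocked position. Regime 2: ordinary components of $\Gamma$ of bounded-below length are handled by a Lemma \ref{lemconvradius}-style transfer-matrix series $\left(1+\sum_n P_n\right)^2 q\left(1+\sum_{n\ge n_0}\frac{P_{2n-1}+P_{2n+1}}{(1-P_{2n-1})(1-P_{2n+1})}\right)$ with spins $\{1,2,3\}$, whose convergency radius exceeds $\omega^{-1}$, contributing $\kappa_4$. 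Regime 3: the case $l<(m-1)/3$ but with the right composition itself essentially an NSG-composition of maximum $3$ ending in a maximal part — here the right factor is governed by $\tilde C$, and combined with the $\omega$-growth of the unconstrained tail one gets, after the pivot-cut into roughly three comparable blocks, the bound $\sqrt[3]{\tilde\gamma\,\omega^2}$ (two blocks of rate $\omega$, one of rate $\tilde\gamma$, geometric mean over the three). Taking the maximum over the regimes yields $\max(\kappa_4,1/\rho,\sqrt[3]{\tilde\gamma\omega^2})$.

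The main obstacle will be Regime 3, the bookkeeping that produces the exponent $\sqrt[3]{\tilde\gamma\omega^2}$: one must argue that an NSG-composition of maximum $4$ whose Frobenius index is small decomposes — after choosing the pivot at the last maximal $4$ and then re-pivoting inside the right part at its own last maximal $3$ — into at most three sub-compositions whose sizes are forced to be (up to bounded error) roughly equal, so that the product of generating series $L\cdot R_1\cdot R_2$ with $R_i$ of convergency radius at least $\omega^{-1}$ and one of them being (a series dominated by) $\tilde C$ forces growth-rate at most the geometric mean $(\tilde\gamma\cdot\omega\cdot\omega)^{1/3}$. Making "roughly equal" precise requires the diagonal/interval geometry alluded to in the introduction: the constraint $l<(m-1)/3$ together with $x_l=4$ being the last maximal part pins the three block-lengths within additive constants, and then a standard argument (as in Section \ref{sectgen}, via the fact that the convergency radius of a product dominates the minimum, refined by the equal-sizes constraint to a geometric-mean statement) closes it. Everything else — the transfer matrices, the eigenvalue estimates at $\omega^{-1}$, the appeal to Lemma \ref{lemconvradius} — is routine in the style already established for $\mu=5$ and $\mu\ge 6$.
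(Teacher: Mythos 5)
Your proposal has genuine gaps at exactly the points where the maximum-$4$ case differs from the maximum-$5$ case, and the shortcuts you take fail numerically. First, your left factor: you claim the weakly-admissible bound $(1+q^2+q^3+q^4)/(1-I_4)$ has growth-rate below $\omega$ because the positive root of $1-I_4$ is $0.7148\ldots>\omega^{-1}$. It is not: the table in Section \ref{sectweakly} gives $I_4(\omega^{-1})=10-4\sqrt5\approx1.0557>1$, so $1-I_4$ already vanishes at some $q<\omega^{-1}$ and the weakly-admissible left factor grows faster than $\omega$. This is why the paper does not use weak admissibility for left compositions of maximum $4$ but proves Proposition \ref{propleftcomp4}, building a graph from \emph{two} maximal parts $x_k=x_l=4$ (with $k\geq l/2$) and exploiting that the resulting paths have endpoints whose spins are restricted to $\{1,2,3\}$, which is what makes the evaluations $\tilde P_n(\omega^{-1})$ decay and stay below $1$. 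Second, your Regime 2 assumes the maximum-$5$ path argument transfers to spins in $\{1,2,3\}$ with adjacent sums $\geq 3$; it does not, and this is the central difficulty of the whole section. The relevant path transfer matrix $\left(\begin{smallmatrix}0&q\\q^2+q^3&q^2+q^3\end{smallmatrix}\right)$ has eigenvalue exactly $1$ at $q=\omega^{-1}$ (see the remark after Proposition \ref{propspincyl}), so $P_n(\omega^{-1})$ does not tend to $0$, the series $\sum_n\frac{P_{2n-1}+P_{2n+1}}{(1-P_{2n-1})(1-P_{2n+1})}$ does not converge past $\omega^{-1}$, and Lemma \ref{lemconvradius} cannot be applied. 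The paper's way around this is two-dimensional: it needs \emph{three} nearby maximal parts (guaranteed by the $\epsilon$-condition, Proposition \ref{propmax4delta} for Frobenius numbers near $4m$, and the pigeonhole argument with nested intervals in Proposition \ref{propmax4often}), so that the right composition is governed by hexagonal tilings of cylinders coming from Schreier graphs of infinite dihedral groups, and by the spin models on \emph{lanes} of Propositions \ref{proppartlane} and \ref{propspincyl}, whose transfer matrix does have all eigenvalues below $1$ at $\omega^{-1}$. None of this appears in your proposal, and without it the claimed bound $\kappa_4<\omega$ for compositions with many maximal parts is unsupported.

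Third, the two explicit constants arise differently from what you describe, and your Regime 3 is the step you yourself flag as missing. In the paper, $\sqrt[3]{\tilde\gamma\omega^2}$ and $1/\rho$ both come from Proposition \ref{propmax4Aparts} (boundedly many maximal parts of size $4$), not from a trisection at the pivot: after lowering the $4$'s to $3$'s one distinguishes whether the resulting maximum-$3$ composition is \emph{short-tailed} (it has a part $\geq 3$ at index $l\geq m/2$). In the short-tailed case the head $x_1+\cdots+x_l$ necessarily carries at least $g/3$ of the genus (it has at least half the parts, each $\geq1$, while the tail parts are $\leq2$), so the count is dominated by the convolution $\sum_{k\geq\lceil g/3\rceil}\tilde c_kF_{g-k}$, whose growth-rate is $\tilde\gamma^{1/3}\omega^{2/3}$ -- a two-block convolution with a $1/3$-fraction constraint, not three roughly equal blocks whose equal sizes would have to be ``pinned within additive constants'' (they are not, and your geometric-mean heuristic has no proof behind it). In the non-short-tailed case a maximal part $x_k=4$ with $k\leq l<m/2$ forces pairs of tail parts in $\{1,2\}$ to sum to at least $3$, giving the factor $\frac{1+q^2}{1-(2q^3+q^4)}$ and hence the root $\rho$ of $1-(2q^3+q^4)$; your reading of $2q^3+q^4$ (and your placement of this regime at $3l\geq m-1$) does not correspond to any step that closes the argument. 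In short, the proposal reproduces the shape of the maximum-$5$ proof in a setting where its two key quantitative inputs ($I_4(\omega^{-1})<1$ and decaying path partition functions) are false, and leaves the genuinely new ingredients -- the lane/hexagonal-tiling analysis and the short-tailed decomposition -- unproved.
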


The inequality $\tilde\gamma<\omega$ will be proven in
Theorem \ref{thmradiustildeC} whose proof uses parts (contained in
Sections \ref{sectCayley}-\ref{subsectepsilon4} and independent
of the assumption $\tilde \gamma<\omega$) of the proof
of Proposition \ref{propmax4}.

We start by outlining the proof of Proposition \ref{propmax4},
hopefully providing the reader with
a ``magic flute'' guiding him through the foggier parts.
The main difficulty is due to the fact that we have to work
with NSG-inequalities involving three maximal parts when
considering right compositions with respect to a suitable
maximal pivot-part.

The proof remains however roughly similar to the proof of
the corresponding result (given by  Proposition \ref{propmax5})
for NSG-compositions of maximum $5$.

We start by studying the growth-rate of NSG-compositions with maximum 4
having a bounded number of maximal parts. This is done procrastinationally
in Section \ref{subsectA} whose main result, Proposition
\ref{propmax4Aparts} is the analogue of
Proposition \ref{propuniquemaximum5}.

Section \ref{subsectleftcomp4} is devoted to left compositions with
respect to an arbitrary maximal pivot-part.

Section \ref{sectFrobclose4m} is the analogue of Proposition
\ref{propmax5largeFrob}: It deals with NSG-compositions whose
Frobenius numbers are close to $4m$.

Section \ref{sectCayley} discusses Cayley and Schreier graphs 
of (some) groups.

Section \ref{sectlane} discusses spin models of lanes,
used in the proof of Section \ref{subsectepsilon4}

Section \ref{subsectepsilon4} contains the core of the proof: It
introduces the $\epsilon$-condition
and bounds the growth rate of right compositions satisfying the 
$\epsilon$-condition.

The content of Sections \ref{sectFrobclose4m}--\ref{subsectepsilon4}
corresponds more or less to the statement and proof of
Proposition \ref{propmax5twoparts} in the case of NSG-compositions of
maximum $5$.

Section \ref{sectproofpm4} ties up loose ends.


\subsection{NSG-compositions with a bounded number of
  maximal parts of size $4$}\label{subsectA}

\begin{prop}\label{propmax4Aparts}
Given a natural integer $A$, the growth-rate of
  NSG-compositions with maximum $4$ having at most $A$ maximal parts
  is at most equal to $\max(1/\rho,\sqrt[3]{\tilde\gamma\omega^2})$
  where $\rho=0.71667\ldots>\omega^{-1}$ is the positive root of
  $1-(2q^3+q^4)$ and where $\tilde \gamma<\omega$ is the growth-rate of
  the generating series $\tilde C(q)$
  enumerating NSG-compositions with maximum $3$ ending with a part
  of maximal size.
\end{prop}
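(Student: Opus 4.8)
The statement to prove is Proposition~\ref{propmax4Aparts}: NSG-compositions with maximum $4$ and at most $A$ maximal parts have growth-rate at most $\max(1/\rho,\sqrt[3]{\tilde\gamma\omega^2})$ with $\rho$ the positive root of $1-(2q^3+q^4)$. My plan is to imitate the proof of Proposition~\ref{propuniquemaximum5}: a NSG-composition with maximum $4$ can be turned into one with a smaller maximum by repeatedly adding Frobenius elements. Concretely, if $x_1+\cdots+x_{m-1}$ has last maximal part $x_l=4$, then adding the Frobenius element $f=3m+l$ to the associated semigroup replaces $x_l=4$ by $x_l=3$ and decreases the genus by $1$; iterating over all $\le A$ maximal parts of size $4$ (in decreasing order of index, so each step is again a legitimate Frobenius reduction) reaches, in at most $A$ steps, a NSG-composition of maximum $\le 3$. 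I would therefore bound the number of NSG-compositions of genus $g$ with maximum $4$ and at most $A$ maximal parts by $\binom{g}{\le A}$ times (roughly) the number of NSG-compositions of genus $\ge g-A$ with maximum $\le 3$; since the polynomial factor $\binom{g}{\le A}$ does not change growth-rates, the growth-rate is bounded by that of NSG-compositions of maximum $\le 3$.

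**Reducing to maximum $\le 3$.** So it suffices to bound the growth-rate of NSG-compositions of maximum $\le 3$ by $\max(1/\rho,\sqrt[3]{\tilde\gamma\omega^2})$. Here I would invoke the pivot-factorisation from Section~\ref{sectpivot} applied to the \emph{last} maximal part $x_l=3$ (the case of maximum $\le 2$ being already covered by Proposition~\ref{propparts12}, whose growth-rate $\omega$ is certainly $\le\max(1/\rho,\sqrt[3]{\tilde\gamma\omega^2})$ only if $\sqrt[3]{\tilde\gamma\omega^2}\ge\omega$, i.e. $\tilde\gamma\ge\omega$ — so I must be careful: in fact maximum $\le 2$ gives growth-rate exactly $\omega$, and one checks $1/\rho=1/0.71667\ldots=1.3953\ldots<\omega$, so the term that must dominate is $\sqrt[3]{\tilde\gamma\omega^2}$, which is $\ge\omega$ iff $\tilde\gamma\ge\omega$; since Proposition~\ref{propparts12}'s contribution is part of the maximum-$\le 3$ count, I should state the bound as applying to NSG-compositions of maximum \emph{exactly} $3$, where all right NSG-inequalities of the second line of (\ref{fundeqsg}) hold automatically for parts in $\{1,2,3\}$). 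Pivoting at $x_l=3$ gives a left composition $x_1+\cdots+x_{l-1}$ whose parts satisfy $x_i+x_{l-i}\ge 3$, contributing a factor enumerated (up to polynomial factors) by a series in powers of $I_3$ — but to get the exponent $\sqrt[3]{\tilde\gamma\omega^2}$ I would instead bound left compositions by $\tilde\gamma$ directly: a left composition ending at the last maximal part is, essentially, a NSG-composition of maximum $3$ ending with a maximal part, so it is counted by $\tilde C$. The right composition has parts in $\{1,2,3\}$ with no binding inequality (second line of (\ref{fundeqsg}) is free), so it is bounded by all compositions with parts $\le 3$, growth-rate $1/\rho$ where $\rho$ is the root of $1-(q+q^2+q^3)$... but the statement gives $1-(2q^3+q^4)$, so the right side must instead be controlled more cleverly.

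**The subtle point and the cube root.** The exponent $\sqrt[3]{\tilde\gamma\omega^2}$ and the polynomial $1-(2q^3+q^4)$ strongly suggest the following: one splits the composition into \emph{three} pieces using two maximal pivot-parts, not one, so that a NSG-composition of maximum $3$ and genus $g$ is built from (at most) three blocks each contributing growth-rate at most $\max(\tilde\gamma,\omega)$ — giving geometric-mean behaviour $(\tilde\gamma\cdot\omega\cdot\omega)^{1/3}$ for the worst mixture — while the genuinely new constraint (three consecutive maximal parts interacting through the second line of (\ref{fundeqsg}) with the $+1$) forces the local pattern to satisfy $x_i+x_j\ge 3$, i.e. the forbidden local configurations are exactly those avoided by $1-(2q^3+q^4)$ (two spins summing to $\ge 4$ among $\{1,2,3\}$: pairs $(1,3),(3,1),(2,2),(2,3),(3,2),(3,3)$, generating series $2q^4+q^4+\cdots$; a transfer-matrix computation gives characteristic polynomial with dominant root $1/\rho$, $\rho$ the root of $1-2q^3-q^4$). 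So the actual plan is: (i) the Frobenius-reduction argument above to get from "$\le A$ maximal parts of size $4$" down to "maximum $\le 3$" at the cost of a polynomial factor; (ii) for maximum exactly $3$, pivot at a maximal part so that the left block is counted by $\tilde C$ (growth-rate $\tilde\gamma$) and the right block — parts in $\{1,2,3\}$, constrained only near the pivot — is counted by a transfer-matrix series of growth-rate $1/\rho$ with $\rho$ the root of $1-(2q^3+q^4)$; (iii) combine via the product-of-generating-series principle (convergency radius of a product is at least the minimum), taking the geometric mean over the (at most three) blocks to reach $\sqrt[3]{\tilde\gamma\omega^2}$, using Lemma~\ref{lemconvradius} to handle the irrational transfer-matrix series. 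The main obstacle I expect is step (iii): correctly bookkeeping how many maximal pivot-parts one actually needs, making the three-block decomposition canonical, and verifying that the worst-case mixture of block growth-rates is exactly $(\tilde\gamma\omega^2)^{1/3}$ rather than something larger like $\max(\tilde\gamma,\omega)$ — this is where the "procrastinational" splitting of cases by the number and spacing of maximal parts (announced in the section header) does the real work, and where I would need to be most careful that the $+1$ in the second line of (\ref{fundeqsg}) is genuinely harmless for parts bounded by $3$.
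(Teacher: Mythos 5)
Your first step (lifting a maximum-$4$ composition with at most $A$ maximal parts from a maximum-$\leq 3$ composition at the cost of a polynomial factor, as in Proposition \ref{propuniquemaximum5}) is the same as the paper's, but the reduction you then propose is insufficient and the statement you reduce to is false: NSG-compositions of maximum $\leq 3$ (and also of maximum exactly $3$) have growth-rate exactly $\omega$, because of arbitrarily long tails with parts in $\{1,2\}$ after the last part of size $3$, whereas the bound you must prove, $\max(1/\rho,\sqrt[3]{\tilde\gamma\omega^2})$, is strictly smaller than $\omega$ (note $1/\rho\approx 1.395<\omega$ and $\sqrt[3]{\tilde\gamma\omega^2}<\omega$ precisely when $\tilde\gamma<\omega$). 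You notice this tension yourself but do not resolve it; restricting to maximum exactly $3$ does not help. The information you must not discard is the position of the parts of size $4$ relative to the tail, and this is exactly what the paper's proof exploits via a dichotomy you are missing. Call the reduced composition \emph{short-tailed} if its last part of size $\geq 3$ has index $l\geq m/2$. In the short-tailed case the prefix $x_1+\cdots+x_l$ (ending with a maximal part, hence counted by $\tilde C$) has at least $m/2$ parts each $\geq 1$ while the tail has at most $m/2$ parts each $\leq 2$, so the prefix carries at least a third of the genus; bounding the count by $\sum_{k\geq\lceil g/3\rceil}\tilde c_k F_{g-k}$ gives growth-rate $\sqrt[3]{\tilde\gamma\omega^2}$, since the worst mixture $\tilde\gamma^{\alpha}\omega^{1-\alpha}$ over $\alpha\geq 1/3$ is attained at $\alpha=1/3$ when $\tilde\gamma<\omega$. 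This — not your two-pivot, three-block guess, which by itself only yields the useless bound $\omega$ because you never establish that a $\tilde\gamma$-block carries a definite fraction of the genus — is where the cube root comes from.

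In the non-short-tailed case ($l<m/2$), the paper keeps a part $x_k=4$ with $k\leq l$ and applies the second-line NSG-inequalities $x_k\leq x_{l+i}+x_{m+k-l-i}+1$: the tail parts lie in $\{1,2\}$ and each such pair must sum to at least $3$, so these pairs contribute the factor $\frac{1+q^2}{1-(2q^3+q^4)}$ — the polynomial $2q^3+q^4$ records the pairs $(1,2),(2,1),(2,2)$ from $\{1,2\}$ with sum $\geq 3$, not (as you guessed) pairs from $\{1,2,3\}$ with sum $\geq 4$, which would give $3q^4+2q^5+q^6$. Removing these paired tail parts shortens the tail to length $l-k<l$, so what remains is short-tailed and falls under the previous case; the total bound $U_A\cdot\frac{1+q^2}{1-(2q^3+q^4)}$ then has growth-rate $\max(\sqrt[3]{\tilde\gamma\omega^2},1/\rho)$. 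Without this mechanism your argument stalls at growth-rate $\omega$, so as written the proposal has a genuine gap rather than an alternative proof.
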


A straightforward modification of the proof of Proposition
\ref{propuniquemaximum5} does unfortunately not work: NSG-compositions
with a maximal part of size $3$ are generic and have growth-rate
$\omega$ due to the possibility of arbitrary long 'tails' involving only
summands $1$ and $2$. In order to circumvent this difficulty,
we discuss first NSG-compositions with 'short tails'.

A NSG-composition $x_1+\ldots+x_{m-1}$ is \emph{short-tailed} if it involves
a summand $x_l\geq 3$ with index $l\geq m/2$.

\begin{lem}\label{lemsht} The generating series $S$ for short-tailed
  NSG-compositions with maximum $3$
  has growth rate at most $\sqrt[3]{\tilde\gamma\omega^2}$
  where $\tilde \gamma<\omega$ is the growth-rate of
  the generating series $\tilde C(q)$
  enumerating NSG-compositions with maximum $3$ ending with a part
  of maximal size.
\end{lem}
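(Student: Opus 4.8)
The plan is to bound the number of short-tailed NSG-compositions of maximum $3$ and genus $g$ by a convolution of two quantities whose growth-rates are both known to be at most $\tilde\gamma$ and $\omega$ respectively, so that the combined growth-rate comes out as $\sqrt[3]{\tilde\gamma\omega^2}$. The key structural observation is that a short-tailed NSG-composition $x_1+\cdots+x_{m-1}$ of maximum $3$ carries a summand $x_l\geq 3$ (hence $x_l=3$, since the maximum is $3$) with $l\geq m/2$; equivalently, writing $r=m-1-l$ for the number of parts following the pivot-part $x_l$, we have $r\leq l-1$, i.e. the left composition $x_1+\cdots+x_{l-1}(+x_l)$ is at least as long as the right composition $(x_l+)x_{l+1}+\cdots+x_{m-1}$.

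First I would perform a pivot-factorisation at the index $l$ of the last maximal part equal to $3$ (so $x_l=3>x_{l+1},\dots,x_{m-1}$ and the Frobenius number is $2m+l$). By the discussion in Section~\ref{sectpivot}, the left block $x_1+\cdots+x_l$ with trailing part $x_l=3$ is itself (after removing trailing zeros, if any) an NSG-composition of maximum $3$ ending with a maximal part, hence is enumerated by a series whose growth-rate is $\tilde\gamma$; call its genus $g_L$. The right block $x_{l+1}+\cdots+x_{m-1}$ has all parts in $\{1,2\}$ subject only to the second-line NSG-inequalities, which hold automatically for parts in $\{1,2,3\}$; so the right block is an arbitrary composition with parts in $\{1,2\}$, enumerated (Proposition~\ref{propparts12}) by $1/(1-q-q^2)$ of growth-rate $\omega$. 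Denote its genus by $g_R$. Thus $g=g_L+g_R$, and the number $s_g$ of short-tailed NSG-compositions of genus $g$ is at most $\sum_{g_L+g_R=g}\tilde c'_{g_L}F_{g_R}$ with $\tilde c'_{g_L}$ the count of NSG-compositions of maximum $3$ ending in a maximal part (of growth-rate $\tilde\gamma$), were it not for the crucial length constraint $r\leq l-1$.

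The length constraint is what turns the naive bound $\tilde\gamma\cdot\omega$ (which would be useless, being $\geq\omega$) into the desired $\sqrt[3]{\tilde\gamma\omega^2}$. The point is that the right block has length $r\leq l-1\leq $ (length of left block), and a composition of genus $g_R$ into at most $\ell$ parts has, roughly, growth-rate $\omega$ only when $\ell$ is comparable to $g_R$; for shorter compositions the count is smaller. Concretely, a composition with parts in $\{1,2\}$ and exactly $r$ parts has genus between $r$ and $2r$, so $g_R\leq 2r\leq 2(l-1)$, while $g_L\geq l$ (the left block has $l$ parts each $\geq 1$, with the last equal to $3$, so in fact $g_L\geq l+2$). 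Hence $g_R\leq 2g_L$, i.e. $g=g_L+g_R\leq 3g_L$, so $g_L\geq g/3$ and correspondingly $g_R\leq 2g/3$. The contribution is therefore at most a sum over $g_L\geq g/3$ of (number of left blocks of genus $g_L$) times (number of right blocks of genus $g-g_L\leq 2g/3$), i.e. $\sum_{g_L\geq g/3}\tilde c'_{g_L}F_{g-g_L}$. Bounding $\tilde c'_{g_L}\leq \tilde\gamma^{g_L+o(g_L)}$ and $F_{g-g_L}\leq\omega^{g-g_L+o(g)}$, each term is at most $\tilde\gamma^{g_L}\omega^{g-g_L}\cdot\omega^{o(g)}$, which as a function of $g_L\in[g/3,g]$ is maximised (since $\tilde\gamma<\omega$) at the left endpoint $g_L=g/3$, giving $\tilde\gamma^{g/3}\omega^{2g/3}=(\tilde\gamma\omega^2)^{g/3}$. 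Summing the at most $g$ such terms only multiplies by a polynomial factor, so $\limsup_g s_g^{1/g}\leq\sqrt[3]{\tilde\gamma\omega^2}$, as claimed.

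The main obstacle I anticipate is making the interplay between \emph{genus} and \emph{length} of the two blocks fully rigorous while keeping it clean: one must verify that the left block genuinely inherits the structure of an NSG-composition of maximum $3$ ending in a maximal part (checking the first-line NSG-inequalities restrict correctly and that removing a possible trailing zero does not break anything), and one must be careful that the growth-rate bound $\tilde c'_{g_L}\leq\tilde\gamma^{g_L(1+o(1))}$ is applied with the $o(1)$ uniform enough that summing $O(g)$ terms is harmless — this is exactly the sort of ``neglecting polynomial factors'' argument used repeatedly in Sections~\ref{sectmax6} and~\ref{sectmax5}, and the same bookkeeping (or a direct appeal to Lemma~\ref{lemconvradius} applied to an explicit product of the form $(\text{series for left blocks})\cdot(\text{truncated }1/(1-q-q^2))$ with the length constraint encoded by a diagonal substitution $q\mapsto q,\ q\mapsto q^2$ trick) works here. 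An alternative, perhaps cleaner, route is to write the generating series for short-tailed compositions directly as a double sum $\sum_{l}\bigl(\text{left, }l\text{ parts}\bigr)\bigl(\text{right, }\leq l-1\text{ parts}\bigr)$ and recognise it as a diagonal-type specialisation; either way the extraction of the exponent $\tfrac13(\log\tilde\gamma+2\log\omega)$ is forced by the constraint $g_R\leq 2g_L$.
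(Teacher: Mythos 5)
Your proposal is correct and follows essentially the same route as the paper: pivot at the last maximal part $x_l=3$ (whose index is at least $m/2$ by short-tailedness), bound the left block by the coefficients $\tilde c_k$ of $\tilde C$ and the right block by Fibonacci numbers, and use the length comparison $m-1-l<l$ to force the left genus to be at least $g/3$, yielding the convolution bound $\sum_{k\geq\lceil g/3\rceil}\tilde c_kF_{g-k}$ of growth-rate $\sqrt[3]{\tilde\gamma\omega^2}$. The only difference is that you spell out the maximisation of $\tilde\gamma^{k}\omega^{g-k}$ over $k\geq g/3$, which the paper leaves implicit.
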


\begin{proof}[Proof of Lemma \ref{lemsht}]
Let $\mathbf x=x_1+\cdots+x_l+\cdots +x_{m-1}$ be a short-tailed NSG-composition
with last maximum $x_l=3$ for $2l\geq m$.
Since $x_1,\ldots,x_l\geq 1$ and $x_{l+1},\ldots,x_{m-1}\leq 2$ with $l>m-1-l$
we have $2(x_1+\cdots+x_l)=2l>2(m-1-l)\geq x_{l+1}+\cdots+x_{m-1}$.
In particular, the NSG-composition $x_1+\cdots+x_l$
contributes at least $g/3$ to the genus $g=x_1+\cdots+x_{m-1}$
of $\mathbf x$.

Denoting by $\tilde c_g$ the number of NSG-compositions of genus $g$
ending with a final maximal part of size $3$ (with growth-rate $\tilde \gamma$,
involved in the
generating series $\tilde C=\sum_{g=3}^\infty \tilde c_gq^g$, see
(\ref{formulafortildeC})),
the series
\begin{align}\label{uppshtld}
U&=\sum_{g=3}^\infty \left(\sum_{k=\lceil g/3\rceil}^g\tilde c_kF_{g-k}\right)
                                q^g
\end{align}
(where the Fibonacci numbers $F_{g-k}$ count possibilities for the final
NSG-composition $x_{l+1}+\cdots+x_{m-1}$ with all parts in $\{1,2\}$,
see Proposition \ref{propparts12})
with growth rate $\sqrt[3]{\tilde\gamma\omega^2}$
gives upper bounds on the coefficients of $S$ counting short-tailed
NSG-compositions with maximum $3$.
\end{proof}

\begin{proof}[Proof of Proposition \ref{propmax4Aparts}]
  Replacing all parts of size $4$ in such a NSG-composition
$\mathbf x=x_1+\cdots+x_{m-1}$ with parts of size $3$, we get a NSG-composition $\underline{
  \mathbf{x}}=\underline{x}_1+\cdots +\underline{x}_{m-1}$
with parts $\underline{x}_i=\min(x_i,3)$
of maximal size $3$.

We count first all such NSG-compositions $\mathbf x$ (having at most
$A$ maximal parts of size $4$) such that
$\underline{\mathbf{x}}$ is short-tailed. Since
such a short-tailed NSG-composition of genus $g$ can be lifted
into at most ${g\choose a}$ NSG-compositions $\mathbf x$ having $a$
maximal parts of size $4$, we can bound the number of such NSG-compositions
$\mathbf x$ by applying the same trick used with $a=1$ in the proof of
Proposition \ref{propuniquemaximum5}. More precisely, numbers of such
NSG-compositions $\mathbf x$ (with maximum $4$ arising at most $A$ times,
associated to a short-tailed NSG-composition $\underline{\mathbf{x}}$
as above) are bounded above by coefficients of
\begin{align}\label{defUA}
U_A&=\sum_{a=1}^Aq^{2a}\frac{d^a}{dq^a}U
     =\sum_{a=1}^Aq^{2a}\frac{d^a}{dq^a}\left(\sum_{g=3}^\infty \left(\sum_{k=\lceil g/3\rceil}^g\tilde c_kF_{g-k}\right)q^g\right)
\end{align}
with $U$ the series of upper bounds for short-tailed NSG-compositions
of maximum $3$ given by (\ref{uppshtld}). Since finite sums of
derivatives do not increase growth-rates, the growth rate of the
series $U_A$ is still given by $\sqrt[3]{\tilde\gamma\omega^2}$.

We consider now NSG-compositions $\mathbf x$ having at most $A$ maximal
parts of size $4$ giving rise to NSG-compositions
$\underline{\mathbf{x}}$ which are not short-tailed.
Such a NSG-composition $\mathbf x=x_1+\cdots+x_l+x_{l+1}+\cdots+x_{m-1}$
has thus a last part $x_l$ of size $3$ or $4$ indexed by $l<m/2$.
Since it has maximum $4$, it contains a part $x_k$ (not necessarily
distinct from $x_l$) of size $4$ with $k\leq l$.
Since $k\in\{1,\ldots,l\}$ with $l<m/2$,
the two (not necessarily distinct) indices $l+1$ and $m+k-l-1$
belong to $\{l+1,\ldots,m-1\}$ and the NSG-inequalities
(\ref{fundeqsg}) involving $x_k=4$ yield
$x_{l+i}+x_{m+k-l-i}\geq 3$ with $x_{l+i},x_{m+k-l-i}$ in $\{1,2\}$
for $i\geq 1$ such that $l+i\leq m+k-l-i$.
The contribution of all such (not necessarily distinct pairs)
to the generating series involved in Proposition \ref{propmax4Aparts}
can be bounded above by
$\frac{1+q^2}{1-(2q^3+q^4)}$. Removing all summands $x_{l+1},x_{l+2},\ldots,
x_{m+k-l-1}$ involved in such pairs from $\mathbf x$ and replacing
maximal parts of size $4$ by parts of size $3$,
we get a NSG-composition
$$\tilde{\underline{\mathbf{x}}}=\min(x_1,3)+\cdots+\min(x_l,3)+
x_{m+k-l}+x_{m+k-l+1}+\cdots+x_{m-1}$$
with tail-length (given by the number of final parts in $\{1,2\}$
following $\underline{x}_l=3$)
$$m-1-(m+k-l-1)=l-k<l\ .$$
This implies that $\tilde{\underline{\mathbf{x}}}$ is short-tailed.

The generating series of NSG-compositions having at most $A$ parts of
maximal size $4$ can thus be bounded above by the generating series
$$U_A\frac{1+q^2}{1-(2q^3+q^4)}$$
(the constant coefficient $1$ of $\frac{1+q^2}{1-(2q^3+q^4)}$
accounts for NSG-compositions discussed at the start of the proof)
with growth-rate given by Proposition \ref{propmax4Aparts}.
\end{proof}

\subsection{Left compositions}\label{subsectleftcomp4}

We set
\begin{align}\label{formulatildePn4}
\tilde P_n&=\left(\begin{array}{ccc}1&1&1\end{array}\right)
\left(\begin{array}{ccc}0&0&q\\0&q^2&q^2\\q^3+q^4&q^3+q^4&q^3+q^4
      \end{array}\right)^n
                                                           \left(\begin{array}{c}q\\q^2\\q^3\end{array}\right)
\end{align}
for $n\geq 1$.

\begin{prop}\label{propleftcomp4} The coefficients
  of the generating series for distinct
  left compositions with respect to a
  maximal pivot-part $4$ are bounded above by coefficients of
  \begin{align}\label{formulaleft4}
    (1+q)\left(1+q^2+q^3+q^4+\sum_{n=1}^\infty\tilde P_n\right)^2
    \left(1+\sum_{n=1}^\infty \frac{\tilde P_{2n-1}+\tilde P_{2n+1}}{(1-\tilde P_{2n-1})(1-\tilde P_{2n+1})}\right)
    \end{align}
  which converges on an open disc of radius strictly larger than
  $\omega^{-1}$.
\end{prop}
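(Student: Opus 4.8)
The statement we must prove (Proposition~\ref{propleftcomp4}) has two independent halves: (i) the coefficients of the generating series for distinct left compositions with respect to a maximal pivot-part~$4$ are dominated coefficientwise by the explicit series~(\ref{formulaleft4}); and (ii) that explicit series has convergency radius strictly larger than $\omega^{-1}$. The plan is to derive (i) by the same pivot/transfer-matrix bookkeeping used in the proof of Proposition~\ref{propweaklyadmgenser} and Proposition~\ref{propmax5twoparts}, and then to obtain (ii) as a short application of Lemma~\ref{lemconvradius} once the eigenvalue estimate already recorded in Section~\ref{sectmax5} is in hand.

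\emph{Step 1: set up the graph of forced inequalities for a left composition.} Fix a NSG-composition $x_1+\cdots+x_{m-1}$ of maximum~$4$ and let $x_l=4$ be a chosen maximal pivot-part (here the \emph{last} maximal part, so $f=3m+l$). By Section~\ref{sectpivot}, the left composition $x_1+\cdots+x_{l-1}$ has all parts in $\{1,2,3,4\}$ and must satisfy $x_i+x_j\ge x_{i+j}$ whenever $i+j\le l$. Taking $i+j=l$ (respectively $i+j=l-1$, etc., as needed for the three-maximal-part analysis) forces $x_i+x_{l-i}\ge 4$ for the relevant pairs. I would encode these forced constraints by a graph $\Lambda$ on the vertex set $\{1,\dots,l-1\}$ with an edge $\{i,j\}$ whenever $i+j\in\{l-1,l\}$ (the two values accounting for the possibility that the pivot and the preceding maximal parts interact — this is exactly the ``three maximal parts'' complication flagged in the outline of Section~\ref{sectmax4}). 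As in Section~\ref{sectmax5}, $\Lambda$ is a disjoint union of paths, split into ordinary components (with a central edge, even length $2n\pm1$ handled via two adjacent endpoints) and at most two exceptional components (those touching a fixed point $l/2$ or $(l-1)/2$).

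\emph{Step 2: read off the factorised bound.} On each path-component, the admissible spin configurations with all adjacent pairs summing to at least~$4$ and spins in $\{1,2,3,4\}$ are counted by the polynomials $A_n,B_n,C_n$ of (\ref{formularecABC5}); set $P_n=A_n+B_n+C_n$, and note $\tilde P_n$ of (\ref{formulatildePn4}) is exactly $\langle(1,1,1),M^n(q,q^2,q^3)^{\mathrm t}\rangle$, i.e. the same transfer matrix $M$ with a slightly different initial vector (the left end of a left component is a genuine leaf, not constrained to be $1$). Collecting one factor $(1+q)$ for the optional parity/boundary correction near the pivot, a squared factor $\bigl(1+q^2+q^3+q^4+\sum_n\tilde P_n\bigr)^2$ for the (at most two) exceptional components together with their possible short stubs, and the product $\prod_n\tilde P_{2n-1}+\tilde P_{2n+1}$ over ordinary components repackaged as the geometric-type series $1+\sum_n \frac{\tilde P_{2n-1}+\tilde P_{2n+1}}{(1-\tilde P_{2n-1})(1-\tilde P_{2n+1})}$, I obtain that every such left composition is counted (with multiplicity at least one) by a monomial in the product~(\ref{formulaleft4}). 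This gives (i). The bookkeeping is routine and structurally identical to the derivation of~(\ref{seriesmax5}); I would present it compactly rather than re-deriving the transfer matrix from scratch.

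\emph{Step 3: convergency radius.} For (ii) I invoke Lemma~\ref{lemconvradius}. The polynomials $\tilde P_n$ satisfy a linear recursion with coefficients in $\mathbb Q[q]$ (the characteristic polynomial of $M$), and at $q=\omega^{-1}$ the eigenvalues of $M$ are $-0.3981,\,0.2476,\,0.9143$, all of modulus $<1$, so $\tilde P_n(\omega^{-1})\to 0$ exponentially and $\sum_n\tilde P_n(\omega^{-1})<\infty$; moreover $\tilde P_n(\omega^{-1})<1$ for all $n\ge1$ (a finite check plus the exponential tail), so each denominator $1-\tilde P_{2n\pm1}$ stays bounded away from~$0$ near $\omega^{-1}$. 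Hence both the squared exceptional factor and the ordinary-component series converge at $q=\omega^{-1}$ with exponentially decaying terms, and Lemma~\ref{lemconvradius} (together with the remark that a finite product of series has convergency radius the minimum of the factors') yields convergence on an open disc of radius strictly larger than $\omega^{-1}$. The main obstacle, as in the maximum-$5$ case, is not the analysis but the combinatorial honesty of Step~1–2: one must make sure that \emph{every} left composition with respect to the chosen maximal pivot is indeed counted — i.e. that the graph $\Lambda$ captures enough forced edges that the decomposition into ordinary/exceptional components is exhaustive and that the stub/parity corrections $(1+q)$ and $1+q^2+q^3+q^4$ are sized correctly — while keeping the bound loose enough that no genuine composition is missed. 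Once that is pinned down, the rest follows the template already established for Propositions~\ref{propmax5twoparts} and~\ref{propweaklyadmgenser}.
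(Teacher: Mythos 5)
There is a genuine gap in Step 1, and it is precisely the point you flagged as ``combinatorial honesty''. You build your graph $\Lambda$ with edges $\{i,j\}$ whenever $i+j\in\{l-1,l\}$ and then impose $x_i+x_j\geq 4$ along all edges. The constraint for $i+j=l$ is indeed forced (since $x_l=4$), but the constraint for $i+j=l-1$ is \emph{not}: the first line of (\ref{fundeqsg}) only gives $x_i+x_j\geq x_{l-1}$, and $x_{l-1}$ can be any value in $\{1,2,3,4\}$. Imposing an unforced inequality means your transfer-matrix count can \emph{miss} genuine left compositions, so what you produce is not an upper bound at all — the inequality goes the wrong way. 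The paper instead takes the second family of edges to be $i+j=k$, where $k<l\leq 2k$ is the largest index with $x_k=4$ (the second-last maximal part), provided such a $k\geq l/2$ exists; only then is $x_i+x_j\geq x_k=4$ genuinely forced. When no such $k$ exists, the graph degenerates to the disjoint edges $\{i,l-i\}$ (plus possibly the isolated vertex $l/2$), a case your construction does not treat. There is also a structural mismatch: the two reflections $x\mapsto l-x$ and $x\mapsto (l-1)-x$ generate translation by $1$, so with your edge set the graph on $\{1,\ldots,l-1\}$ is a \emph{single} long path, not a union of many ordinary components plus at most two exceptional ones — which is exactly the decomposition that the shape of (\ref{formulaleft4}) (squared exceptional factor times a sum over ordinary path lengths $2n\pm1$) is encoding. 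With the paper's choice of $k$ the decomposition, the restriction of endpoint spins to $\{1,2,3\}$ (because indices strictly between $k$ and $l$ carry parts $\leq 3$ — not because of a ``leaf'' condition), and the correction factors $(1+q)$ for $x_k=4$ and the squared factor for paths ending at $\{k/2,l/2\}\cap\mathbb N$ all fall into place.

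Your Step 3 is fine as far as it goes: the convergency statement does follow from Lemma \ref{lemconvradius} once one knows that $\tilde P_n(\omega^{-1})$ decays exponentially (same transfer matrix as in (\ref{formularecABC5}), eigenvalues of modulus $<1$ at $q=\omega^{-1}$) and that $\tilde P_n(\omega^{-1})<1$ for $n\geq 1$, and this matches the paper. But the analytic half cannot rescue the combinatorial half: to repair the proof you must replace the edge family $i+j=l-1$ by $i+j=k$ for the second-last maximal part $x_k=4$ with $k\geq l/2$ (treating separately the case where it does not exist), and only then does the counting argument behind (\ref{formulaleft4}) give a legitimate upper bound.
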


\begin{proof}
In order to study the generating series associated to left
compositions $x_1+\cdots+x_{l-1}(+x_l)$ with respect to a maximal
pivot-part $x_l=4$, we consider the graph $\Gamma$ with vertices
$1,\ldots,l-1$ and edges $\{i,j\}$ if $i+j\in\{k,l\}$
where $k<l\leq 2k$ is maximal such that $x_k=4$ if such an index
$k\geq l/2$ exists.
The graph $\Gamma$ is a union of disjoint edges $\{i,l-i\}$
(and of the isolated
vertex $l/2$ if $l$ is even) if the index $k$ does not exist.
Connected components of
$\Gamma$ are paths having at least one end-point in
$k,k+1,\ldots,l-1$ (respectively $\lceil l/2\rceil,
\ldots,l-1$ if $k$ does not exist). (More precisely, they all start
and end at points in $k,\ldots,l-1$, except for at most two
paths starting at elements of $k,\ldots,l-1$ and ending
in $\{k/2,l/2\}\cap \mathbb N$.)

Parts (spins) associated to initial vertices in $\{k,k+1,\ldots,l-1\}$
are elements of $\{1,2,3\}$,
except for the path starting at $x_k=4$ if $k$ exists. All other spins
are in $\{1,2,3,4\}$ and we have $x_i+x_j\geq 4$
for edges $\{i,j\}$ of $\Gamma$.

The generating series of a path of length
$n\geq 1$ not starting at $x_k$
is thus given by the polynomial $\tilde P_n$ (obtained
by the transfer matrix method, compare with formula \ref{formularecABC5})
defined by formula (\ref{formulatildePn4}). 

Formula (\ref{formulaleft4}) is analogous to Formula (\ref{seriesmax5}):
The factor $(1+q)$ of formula (\ref{formulaleft4})
accounts for the correction $x_k=4\not\in\{1,2,3\}$
of the (not necessarily existing) path starting at $k$ with initial spin
$x_k=4$.
The squared factor takes into account the possible existence of exceptional
paths ending at $\{l/2,k/2\}\cap\mathbb N$. The final factor deals with
all ordinary paths (not containing a vertex of $\{l/2,k/2\}\cap \mathbb N$)
of $\Gamma$.

Convergency of the series (\ref{formulaleft4})
for $q$ slightly larger than $\omega^{-1}$
follows from Lemma \ref{lemconvradius}: Elementary (and somewhat lengthy)
computations involving the transfer matrix of (\ref{formulatildePn4})
show that evaluations at $q=\omega^{-1}$
of $\tilde P_n$ decay exponentially fast and are strictly smaller than $1$
for $n\geq 1$.
\end{proof}

\subsection{Frobenius numbers close to $4m$}\label{sectFrobclose4m}

\begin{prop}\label{propmax4delta}
  There exists $\delta>0$ such that
  NSG-compositions $x_1+\cdots+x_l+\cdots+x_{m-1}$
  satisfying the inequality $(m-1-l)\leq \delta(m-1)$ for the
  index $l$ of their last maximal part $x_l=4$
  have growth-rate strictly smaller than $\omega$.
\end{prop}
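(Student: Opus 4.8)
The plan is to mimic the strategy of Proposition \ref{propmax5largeFrob}: when the last maximal part $x_l=4$ sits very close to the right end of the composition (i.e. the Frobenius number $3m+l$ is close to $4m$, equivalently $m-1-l$ is a small fraction of $m-1$), the \emph{left} composition $x_1+\cdots+x_{l-1}$ is long and the \emph{right} composition $x_{l+1}+\cdots+x_{m-1}$ is short, so the total growth-rate is dominated by the left part, but the left part is heavily constrained by NSG-inequalities of the first line of (\ref{fundeqsg}) involving the pivot $x_l=4$. Concretely, I would pick the index $l$ of the last maximal part as pivot, and bound the number of such NSG-compositions of genus $g$ by the coefficient of $q^g$ in a factorised expression
\begin{align*}
  L(q)\cdot q^4\cdot R(q),
\end{align*}
where $R(q)$ accounts for the short right composition (at most $\delta(m-1)$ parts, all in $\{1,2,3\}$, so $R$ contributes at most a bounded power $(q+q^2+q^3)^{\lfloor\delta(m-1)\rfloor}$ per fixed $m$), and $L(q)$ is the generating series of Proposition \ref{propleftcomp4} for left compositions with respect to a maximal pivot-part $4$, which by that proposition converges on a disc of radius strictly larger than $\omega^{-1}$, say of growth-rate $\kappa<\omega$.

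The key steps, in order, are: (1) fix the pivot $p=l$ and separate the composition into left part $x_1+\cdots+x_{l-1}$, pivot $x_l=4$, and right part $x_{l+1}+\cdots+x_{m-1}$; (2) invoke Proposition \ref{propleftcomp4} to get that left compositions of ``weight'' (genus contribution) $k$ number at most $C_1\kappa^k$ for some $\kappa<\omega$; (3) observe that since $m-1-l\le\delta(m-1)$ we have $l\ge(1-\delta)(m-1)$, hence $l/(m-1-l)\ge(1-\delta)/\delta$, so the right composition — whose parts are all $\le 3$ — contributes at most $3(m-1-l)\le \tfrac{3\delta}{1-\delta}\,l$ to the genus, while the left composition contributes at least $l-1$; thus the right composition's genus contribution is at most a $\tfrac{3\delta}{1-\delta}$-fraction of the total; (4) bound the number of right compositions of genus-contribution $j$ crudely by $3^j$ (all compositions with parts in $\{1,2,3\}$), so the total count of genus $g$ is at most $\sum_{j\le \alpha g} C_1\kappa^{\,g-4-j}\,3^{\,j}\le C_2\,(g+1)\,\kappa^{\,g}\,(3/\kappa)^{\alpha g}$ where $\alpha=\alpha(\delta)=\tfrac{3\delta}{1-\delta}/(1+\tfrac{3\delta}{1-\delta})\to 0$ as $\delta\to 0$; (5) choose $\delta$ small enough that $\kappa\,(3/\kappa)^{\alpha(\delta)}<\omega$, which is possible since $\kappa<\omega$ and $\alpha(\delta)\to 0$, giving growth-rate strictly below $\omega$.

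The main obstacle is making step (3)–(4) quantitatively tight: one must be careful that the ``weight'' used to index left compositions in Proposition \ref{propleftcomp4} is genuinely the genus contribution $x_1+\cdots+x_{l-1}$ (including all the spins along the graph-$\Gamma$ paths), and that the crude bound $3^j$ for right compositions does not interact badly with the bound for left compositions — but since we only need an inequality and both factors are genuine generating series with non-negative coefficients, the product bound is automatic, and the only real content is the elementary optimisation over $\delta$. A secondary point to check is the boundary case where the pivot index $k<l$ in Proposition \ref{propleftcomp4} may or may not exist; this is already handled in that proposition and causes no extra trouble here. I expect the whole argument to be short once Proposition \ref{propleftcomp4} is in hand.
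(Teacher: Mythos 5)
Your proposal is correct and follows essentially the same route as the paper: pivot-factorisation at the last maximal part $x_l=4$, the bound of Proposition \ref{propleftcomp4} for the left factor, the observation that the right composition contributes at most an $O(\delta)$-fraction of the genus, and the elementary optimisation $\kappa^{1-\alpha}\cdot(\mbox{right rate})^{\alpha}\to\kappa<\omega$ as $\delta\to 0$. The only difference is cosmetic: you bound right compositions crudely by $3^j$ where the paper uses the series $(1+q^2+q^3)/(1-I_3)$, but since both have growth-rate exceeding $\omega$ and only the smallness of the right genus fraction matters, this changes nothing.
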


\begin{rem} The Frobenius number $3m+l$ of NSG-compositions
described by Proposition \ref{propmax4delta}
is close to $4m$ if $\delta$ is small.
\end{rem}

\begin{proof}[Proof of Proposition \ref{propmax4delta}]
  We consider the pivot-factorisation of such a NSG-composition
  $x_1+\cdots+x_{m-1}$ with pivot-part the last maximal part $x_l$.
  Using the NSG-inequalities $x_l=4\leq 1+x_{l+i}+x_{m-i}$
  we see that the right composition $x_{l+1}+\cdots+x_{m-1}$
  (with $m-1-l$ parts in $\{1,2,3\}$) contributes at most a factor
  of $\sum_{n=0}^{3(m-1-l)}r_nq^n$ to the generating series where 
  $\sum_{n=1}^\infty r_nq^n=(1+q^2+q^3)/(1-I_3)$ (for
  $I_3=2q^3+3q^4+2q^5+q^6$ given by Formula (\ref{formulaIn})).
  Since $(m-1-l)\leq\delta(m-1)<\delta g$ (with $g=x_1+\cdots+
  x_{m-1}$ denoting the genus) we get the upper bound
\begin{align}\label{formrightcompdelta}
  &\sum_{n=0}^{\lceil 3\delta g\rceil}r_nq^n
\end{align} 
for contributions
  coming from right-compositions. Coefficients 
of the rational series $\sum_{n=0}^\infty r_nq^n$ have however growth-rate
  $1/\rho>\omega$ for 
  $\rho=0.596\ldots$ the real positive root of $1-I_3$.

  Contributions coming from left compositions are bounded by
  the series $A=\sum_{n=0}^\infty a_nq^n$ defined by
  formula (\ref{formulaleft4}) (which yields upper bounds on left compositions
  with respect to a maximal pivot-part of size $4$).
  Combining this with the contribution (\ref{formrightcompdelta})
we get the upper bound

\begin{align}
 \label{seriesFrobclose4m}
  &\sum_{g=4}^\infty q^g\sum_{i=\lceil g(1-3\delta)\rceil}^ga_ir_{g-i}.
\end{align}
on the generating series for NSG-compositions described by Proposition 
\ref{propmax4delta}. Denoting by $\alpha$ the growth-rate of the 
series $A$ we get the upper bound
$\alpha^{1-3\delta}(1/\rho)^{3\delta}$  on the growth-rate of 
(\ref{seriesFrobclose4m}).  Proposition \ref{propleftcomp4} 
shows the inequality $\alpha<\omega$ which ends the proof since
$\lim_{\delta\rightarrow 0}\alpha^{1-3\delta}(1/\rho)^{3\delta}=\alpha$.
\end{proof}

\subsection{Groups generated by reflections of $\mathbb R$}
\label{sectCayley}

A group $\Gamma$ acting properly by affine
isometries on the $d$-dimensional Euclidean space $\mathbb E^d$
is \emph{crystallographic} if it has a bounded fundamental domain.
Crystallographic groups are considered up to equivalence
under conjugation by affine bijections.
A crystallographic group is a \emph{Bravais group} if it is the full group
of all affine isometries of an Euclidean lattice.

The simplest crystallographic group is $\mathbb Z^d$ (acting by translations).
The group $\mathbb Z^d$ is not a Bravais group.

The simplest non-commutative example is the simplest Bravais group
$\{\pm 1\}\ltimes \mathbb Z^d$
with $\pm 1$ acting by $x\longmapsto \pm x$ on the Euclidean space (and with
$-1$ conjugating a translation to its inverse). It is
the full group of affine isometries of a generic $d$-dimensional
Euclidean lattice having a trivial automorphism group reduced to
$\pm 1$. It consists of all translations by elements of
$\mathbb Z^d$ and of all involutions $x\longmapsto z-x,\
z\in \mathbb Z^d$ with fix-points in the
super-lattice $\frac{1}{2}\mathbb Z^d$ containing $\mathbb Z^d$
with index $2^d$.  

\begin{prop} 
  The group $\langle \mathcal R\rangle$ generated by a finite
  set $\mathcal R$ of real reflections of $\mathbb R$ is isomorphic to the
  simplest Bravais group $\{\pm 1\}\ltimes\mathbb Z^d$
  where $d$ is the dimension of the $\mathbb Q$-vector-space
  generated by all translations (given by $\sigma\circ \rho$ for
  $\sigma,\rho\in \mathcal R$) of $\langle \mathcal R\rangle$.
\end{prop}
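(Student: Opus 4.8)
The plan is to analyze the structure of a group $\langle\mathcal R\rangle$ generated by finitely many reflections of the real line $\mathbb R$ by understanding how such reflections compose. A reflection $\rho$ of $\mathbb R$ is an involution of the form $\rho(x) = 2a - x$ for some $a \in \mathbb R$ (its fixed point), and the composition of two reflections $\rho_a, \rho_b$ with fixed points $a, b$ is the translation $x \mapsto x + 2(a - b)$. I would first fix a basepoint, say the fixed point $a_0$ of one chosen reflection $\rho_0 \in \mathcal R$, and rescale so that $\rho_0(x) = -x$. Then every $\rho \in \mathcal R$ is $\rho(x) = 2a_\rho - x$, and $\rho \circ \rho_0$ is the translation by $2a_\rho$. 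Let $T$ denote the subgroup of $\langle\mathcal R\rangle$ consisting of all translations; it is generated by the translations $\rho \circ \sigma$ for $\rho, \sigma \in \mathcal R$, equivalently by the translations by $2(a_\rho - a_\sigma)$, equivalently (using $\rho_0$) by the translations by $2(a_\rho - a_{\rho_0})$ for $\rho$ ranging over $\mathcal R$. Thus $T$ is a finitely generated subgroup of the translation group $(\mathbb R, +)$, hence a free abelian group $\mathbb Z^d$ where $d$ is exactly the $\mathbb Q$-rank of the span of these translation vectors — which is the dimension of the $\mathbb Q$-vector space generated by all translations of $\langle\mathcal R\rangle$, as claimed.

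Next I would establish that $\langle\mathcal R\rangle = \{\pm 1\} \ltimes T$. Every element of $\langle\mathcal R\rangle$ is a word in the reflections; since each reflection reverses orientation and each translation preserves it, the parity of the word length determines whether a given element is a translation or an orientation-reversing affine isometry $x \mapsto c - x$. So $\langle\mathcal R\rangle$ is the disjoint union of $T$ and the coset $\rho_0 T$, giving a short exact sequence $1 \to T \to \langle\mathcal R\rangle \to \{\pm 1\} \to 1$. This sequence splits via $\rho_0$ (which has order $2$), and conjugation by $\rho_0$ sends a translation $x \mapsto x + t$ to $x \mapsto x - t$, i.e. acts as $-1$ on $T \cong \mathbb Z^d$. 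Hence $\langle\mathcal R\rangle \cong \{\pm 1\} \ltimes \mathbb Z^d$ with $-1$ acting by inversion, which is precisely the ``simplest Bravais group'' described just above the proposition.

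It remains to check consistency of the isomorphism type with $d$ as stated, namely that $T$ really does have rank $d = \dim_{\mathbb Q}(\text{span of all translations of }\langle\mathcal R\rangle)$. Every translation in $\langle\mathcal R\rangle$ lies in $T$ (parity argument), and $T$ is generated by $\{\,x \mapsto x + 2(a_\rho - a_{\rho_0}) : \rho \in \mathcal R\,\}$, whose $\mathbb Q$-span has some dimension $d$; a finitely generated subgroup of $\mathbb R$ with $\mathbb Q$-span of dimension $d$ is free abelian of rank $d$ (choose a $\mathbb Q$-basis among the generators, clear denominators, and invoke the structure theorem for finitely generated abelian groups together with torsion-freeness). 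This pins down $T \cong \mathbb Z^d$ and completes the identification.

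The main obstacle — really the only subtle point — is making sure the group $T$ of translations is genuinely free abelian of the correct rank rather than, say, a non-discrete dense subgroup of $\mathbb R$ causing the action to fail to be proper in the naive sense: the key is that $T$ is \emph{finitely generated}, so even if it is dense in $\mathbb R$ as a subset, it is abstractly isomorphic to $\mathbb Z^d$, and the semidirect product $\{\pm 1\} \ltimes \mathbb Z^d$ is exactly the abstract group one wants (the proposition asserts an abstract group isomorphism, not a conjugacy of actions on $\mathbb R$; indeed the relevant faithful proper action is the one on $\mathbb E^d$, recovered by realizing $\mathbb Z^d$ standardly). I would be careful to phrase the conclusion as an abstract isomorphism and note explicitly that the $d$-dimensional proper action arises by re-embedding $\mathbb Z^d$ as a lattice in $\mathbb E^d$.
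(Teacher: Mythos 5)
Your proof is correct and follows essentially the same route as the paper: identify the index-two subgroup of translations (even-length words), observe it is finitely generated and torsion-free hence isomorphic to $\mathbb Z^d$ with $d$ the $\mathbb Q$-dimension of its span, normalise one reflection to $x\longmapsto -x$ so that it acts by inversion, and conclude the semidirect-product structure. Your extra remarks on the structure theorem and on the isomorphism being abstract (with the proper action recovered on $\mathbb E^d$) only make explicit what the paper leaves implicit.
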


\begin{proof} Products of an even number of
  generators in $\mathcal R$ generate the normal commutative
  subgroup $\Gamma^e$ (of index $2$)  
  consisting of all translations in $\Gamma=\langle\mathcal R\rangle$. The torsion-free subgroup
  $\Gamma^e$ is thus isomorphic to $\mathbb Z^d$ where $d$
  is the dimension of the $\mathbb Q$-vector space spanned by
  all translations defined by elements in $\Gamma^e$.
  (The group $\Gamma$ acts of course by affine
  bijections on the underlying $\mathbb Q$-affine space.)
  
  Up to conjugation of $\Gamma$ by the translation of $\mathbb R$
  which sends the fix-point of $\rho_0$ to the origin,
  we can assume that the first
  element $\rho_0$ of $\mathcal R$ is given by $x\longmapsto -x$.
  This element acts on $\Gamma^e$ by conjugating each element of
  $\Gamma^e$ to its inverse.

  Finally, the reflection $x\longmapsto t-x$
  with fix-point $t/2$ for $t\in\mathbb Z^d$ is the composition of
  the reflection $x\longmapsto -x$ followed by the
  translation $x\longmapsto x+t$.
\end{proof}

Given a group $\Gamma=\langle \mathcal G\rangle$ generated by
a finite symmetric set $\mathcal G=\mathcal G^{-1}$ containing all inverses
of its elements, the group $\Gamma$ indexes the set of vertices
of its \emph{Cayley graph} (with respect to the generating set $\mathcal G$)
with edges given by $\{r,gr\}$ for $(g,r)\in\mathcal G\times \Gamma$.
Observe that edges of Cayley graphs are coloured by $g^{\pm 1}$
  for $g$ in $\mathcal G$.

The structure of Cayley graphs is compatible with left-cosets
giving rise to Schreier graphs: The \emph{Schreier graph}
with respect to a subgroup $H$ of $\Gamma$ has 
vertices given by left cosets $rH$ for $r$ in $\Gamma$ and edges given by
$\{rH,grH\}$ for $g$ in $\mathcal G$. This turns sets with actions of $\Gamma$
into graphs: connected components are orbits and correspond to Schreier
graphs with respect to stabilisers of base-points chosen in each orbit.

We apply this to groups $\Gamma=\langle \alpha,\beta,\gamma\rangle$
generated by three reflections $\alpha(x)=a-x,\ \beta(x)=b-x,\ \gamma(x)
=c-x$ where $a,b,c\in \mathbb Z$ are three distinct integers.
$\Gamma$ is always the infinite dihedral group with translation-subgroup
$\mathrm{gcd}(b-a,c-b)\mathbb Z$. Orbits of its obvious action
(by affine isometries) on $\mathbb Z$ are either
isomorphic to its Cayley graph or are isomorphic to Schreier graphs
defined by two-element subgroups fixing some integer\footnote{For graphical
  representations of such orbits the reader can look at pictures of
  single-walled carbon nanotubes.}. 

More precisely, since compositions of all three generators
define reflections, the Cayley graph (which is a bipartite $3$-regular
graph with edges of three ``colours'' corresponding to the sum in
$\{a,b,c\}$ of their two
endpoints) is a hexagonal tiling of
a cylinder $\mathbb S^1\times \mathbb R$. Orbits of
its obvious action on $\mathbb Z$ (with vertices $\mathbb Z$ and edges
$\{x,y\}$ for $x+y\in \{a,b,c\}$) are either hexagonal tilings of cylinders
(for orbits with free action) or of half-cylinders (for orbits with
stabilisers), up to neglecting
a finite set of points near fix-points of generators forming
a somewhat messy 'cap'. The exact nature of the tiling (and of
the extremal cap in the
half-cylinder case) depends on arithmetic properties of $b-a,c-b$ and
the orbit.

The example $\{a,b,c\}=\{1,3,5\}$ leads to a free transitive action on
$\mathbb Z$ (which can hence be identified with the Cayley graph of
$\Gamma$). The tiled cylinder is given by
\begin{align}\label{exple135}
&\begin{array}{ccccccccccccc}
    &\underbar{2}&&0&&2&&4&&6&&8\\
    &\diagup\diagdown&&\diagup\diagdown&&\diagup\diagdown&&\diagup\diagdown&&\diagup\diagdown&&\diagup\diagdown&\\
    7&&5&&3&&1&&\underbar{1}&&\underbar{3}&&\underbar{5}\\
    \vert&&\vert&&\vert&&\vert&&\vert&&\vert&&\vert\\
    \underbar{6}&&\underbar{4}&&\underbar{2}&&0&&2&&4&&6\\
    &\diagdown\diagup&&\diagdown\diagup&&\diagdown\diagup&&\diagdown\diagup&&\diagdown\diagup&&\diagdown\diagup&\\
    &9&&7&&5&&3&&1&&\underbar{1}&\\
 \end{array}
\end{align}
with $\underbar{x}$ representing $-x$. Endpoints of vertical edges
sum up to $1$ (corresponding to the generator $x\longmapsto 1-x$.
The corresponding sum for edges $\diagdown$, respectively $\diagup$,
is $3$, respectively $5$. Edges with
identical endpoints are of course identified in the representation
given above.

\begin{rem}
  Identifying even integers with even translations, the group-law on
  $\Gamma$ is given by $x\cdot y=x+(-1)^xy$
  for $x,y\in\mathbb Z$ (this works if and only if $a,b,c$ are all odd
  and $\mathrm{gcd}(b-a,c-b)=2$, or equivalently, if the action of $\Gamma$
  on $\mathbb Z$ is simply transitive).
  Observe that this group-law is compatible with classes modulo
  $2N$ (leading to finite dihedral groups).
\end{rem}

  An example of an orbit defining a Schreier graph is
  given by $\{a,b,c\}=\{0,1,2\}$:
\begin{align}\label{exple012}
&\begin{array}{ccccccccc}
      &&&&4&&6&&8\\
      &&&&\diagup\diagdown&&\diagup\diagdown&&\diagup\phantom{\diagdown}\\
      &&&\underbar{2}&&\underbar{4}&&\underbar{6}\\
      &&&\vert&&\vert&&\vert\\
      &&&3&&5&&7\\
      &&&\diagup\diagdown&&\diagup\diagdown&&\diagup\diagdown\\
      1&-&\underbar{1}&&\underbar{3}&&\underbar{5}&&\underbar{7}\\
      \vert&&\vert&&\vert&&\vert&&\vert\\
      0&-&2&&4&&6&&8\\
      &&&\diagdown\diagup&&\diagdown\diagup&&\diagdown\diagup\\
      &&&\underbar{2}&&\underbar{4}&&\underbar{6}
      \\
 \end{array}
\end{align}
(with conventions as above).

Each generator $g$ in $\{\alpha,\beta,\gamma\}$ of such a group
$\Gamma=\langle\alpha,\beta,\gamma\rangle$ defines parallel
\emph{lanes} given by bi-infinite sequences $\ldots,H_{-1},H_0,H_1,\ldots$
of hexagons in the Cayley graph of $\Gamma$ with $H_i,H_{i+1}$
sharing a common edge coloured by the generator $g$ (i.e. of
the form $\{r,gr\}$). The three generators of $\Gamma$ correspond thus
to three directions (or types) or parallel lanes.
A lane is of colour $g$ if its interior edges (separating adjacent
hexagons) are coloured by the generator $g$ in $\{\alpha,\beta,\gamma\}$.
A lane is \emph{embedded} if there are no identifications along its
boundaries, i.e. if it
injects into the Cayley graph. A lane is embedded if and only
if it misses some edges (of its proper colour)
in the full Cayley graph of $\Gamma$.
Embedded lanes have parallel lanes of the same direction.
Non-embedded lanes of a given colour (direction) are unique if they
exist.

All lanes are infinite
(otherwise we get a
non-trivial translation of finite order defined by the composition of the two
reflections associated to its boundary-edges).
This implies that there is always a colour corresponding to embedded lanes.
Indeed, all three directions of lanes go off to infinity and lanes with fastest
escape-rates are embedded since they cover a strictly smaller proportion of
the total area than lanes of other colours (which wind more
around the tiled cylinder).

In example (\ref{exple135}), the two parallel
lanes defined by $x\longmapsto 3-x$ (corresponding to edges $\diagdown$) are
embedded. Both remaining generators define a unique lane which is
not embedded.

The notion of a lane makes sense for Schreier graphs defining
hexagonal tilings of half-cylinders by considering infinite sequences
$H_0,H_1,H_2,\ldots$ of distinct hexagons moving only in one direction.
Embeddedness is defined by neglecting the messy end capping off the
tiled half-cylinder. Example (\ref{exple012}) gives rise to
two embedded parallel lanes associated to the generator $x\longmapsto 1-x$
(corresponding to vertical edges)
and to (unique) non-embedded lanes associated to each of the remaining
generators.

\subsection{Spin models on lanes}\label{sectlane}

Recall that a lane is isomorphic to an infinite sequence of consecutively
adjacent hexagons with centres on a straight line. 
The following picture shows 
the $i$-th hexagon $H_i$ of an upgoing vertical lane.
\begin{align}\label{piecelane}
  \begin{array}{ccccccc}
    &&l_{i+1}&-&r_{i+1}\\
    &\diagup&&&&\diagdown\\
    l'_i&&&H_i&&&r'_i\\
    &\diagdown&&&&\diagup\\
    &&l_i&-&r_i\\
    \end{array}
\end{align}
A \emph{finite (embedded) lane} is a connected finite graph defined by a finite
number of consecutively adjacent hexagons in
a (embedded) lane. Finite lanes of vertical direction are obtained by
vertically stacking (\ref{piecelane}) in the obvious way. We use the notations
of (\ref{piecelane}): Left and right boundaries form paths with vertices
$\ldots,l_i,l'_i,l_{i+1}l'_{i+1},\ldots$, respectively
$\ldots,r_i,r'_i,r_{i+1},r'_{i+1},\ldots$.

We consider the constrained spin model with spins in $\{1,2,3\}$
on vertices of finite embedded lanes such that $x_i+x_j\geq 3$ for spins
$x_i,x_j$ on adjacent vertices. We have the following result:

\begin{prop}\label{proppartlane} The partition function of a
  finite embedded lane containing $n$ hexagons is given by the polynomial
\begin{align}\label{defLlane}
L_n&=(q^2+q^3)^{2n}\left(\begin{array}{ccc}1&1&1\end{array}\right)
         T^n\left(\begin{array}{c}
                                             q(q^2+q^3)\\(q^2+q^3)q\\(q^2+q^3)(q^2+q^3)\end{array}\right)
\end{align}
    where
   \begin{align}\label{formulatransferT3}
    T=\left(\begin{array}{ccc}
      q(q+q^2+q^3)&q(q^2+q^3)&q(q+q^2+q^3)\\
      q(q^2+q^3)&q(q+q^2+q^3)&q(q+q^2+q^3)\\
      (q^2+q^3)(q+q^2+q^3)&(q^2+q^3)(q+q^2+q^3)&(q+q^2+q^3)^2
            \end{array}\right)
          \end{align}

   Evaluations at $q=\omega^{-1}$ of the polynomials $L_n$ decay exponentially
          fast to $0$.
\end{prop}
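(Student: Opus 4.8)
The plan is to set up the transfer-matrix computation for the constrained spin model on a vertical lane and then analyse the spectral radius of the resulting transfer matrix at $q=\omega^{-1}$. First I would make precise the combinatorial structure of a finite embedded vertical lane: reading off (\ref{piecelane}), the $i$-th hexagon $H_i$ contributes two ``rungs'' $l_i-r_i$ and $l_{i+1}-r_{i+1}$ together with the two side-vertices $l'_i,r'_i$, and consecutive hexagons share a rung. The constraint is $x_u+x_v\ge 3$ on every edge, i.e. on every rung $\{l_i,r_i\}$, on the four ``slanted'' edges $\{l_i,l'_i\},\{l'_i,l_{i+1}\},\{r_i,r'_i\},\{r'_i,r_{i+1}\}$. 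The natural state to carry across the transfer step is the pair $(x_{l_i},x_{r_i})\in\{1,2,3\}^2$ restricted to the six pairs with $x_{l_i}+x_{r_i}\ge3$; but by the left--right symmetry and the fact that the two sides decouple once the rung values are fixed, one reduces the relevant data to the value of a single side-spin, classified as $1$, $2$, or $\{3\}$ — which is exactly why $T$ is $3\times3$ rather than $6\times6$. I would verify that summing $q^{x_{l'_i}+x_{r'_i}}$ over admissible $(x_{l'_i},x_{r'_i})$ compatible with the incoming rung and then over the admissible outgoing rung reproduces, entry by entry, the matrix $T$ of (\ref{formulatransferT3}): the three ``column'' classes of the outgoing side-spin are $1$ (weight $q$, only compatible with a neighbour $\ge 2$, giving factor $q^2+q^3$ from the partner side and $q+q^2+q^3$ or $q^2+q^3$ depending), $2$ (weight $q^2$, compatible with everything), and $\{3\}$ (weight $q^3$, compatible with everything), and the prefactor $(q^2+q^3)^{2n}$ together with the boundary vectors in (\ref{defLlane}) accounts for the two free outermost side-spins and the two boundary rungs. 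This bookkeeping is routine but slightly fiddly; I would present it as a single induction on $n$ exactly as indicated after (\ref{formularecABC5}).

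Once the identity $L_n=(q^2+q^3)^{2n}(1\ 1\ 1)\,T^n\,v_0$ is established, the decay statement reduces to showing that the spectral radius of the matrix $M(q):=(q^2+q^3)^{2}\cdot(\text{the }q\text{-scaled version of }T)$ — more precisely, that the largest modulus of an eigenvalue of the matrix governing $L_{n+1}/L_n$ — is strictly less than $1$ at $q=\omega^{-1}$. Concretely, writing $L_n$ via a single transfer matrix $\widehat T(q)$ (absorbing one factor $(q^2+q^3)^2$ per hexagon into $T$), I would argue that $\widehat T(\omega^{-1})$ is a nonnegative matrix, apply Perron--Frobenius to identify its spectral radius with its largest real eigenvalue, and then either (a) compute the characteristic polynomial of $\widehat T(q)$, substitute $q=\omega^{-1}$ using $\omega^{-2}=1-\omega^{-1}$ to land in $\mathbb{Q}[\sqrt5]$, and check that all its roots lie in the open unit disc, or (b) exhibit an explicit positive vector $w$ and a constant $\theta<1$ with $\widehat T(\omega^{-1})\,w\le \theta w$ entrywise, which by Perron--Frobenius bounds the spectral radius by $\theta$. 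Option (b) is cleaner to write and avoids factoring a cubic over $\mathbb{Q}[\sqrt5]$; I would try a vector of the shape $w=(1,s,t)^{\top}$ with small rational $s,t$ and optimise. Either way, once the spectral radius is $<1$ we get $\|\widehat T(\omega^{-1})^n\|=O(\theta^n)$, hence $L_n(\omega^{-1})\to0$ exponentially, as claimed.

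The main obstacle I anticipate is purely the verification that the explicit $3\times3$ matrix $T$ in (\ref{formulatransferT3}) and the boundary data in (\ref{defLlane}) are the \emph{correct} encoding of the lane's adjacency structure — i.e. matching the geometry of (\ref{piecelane}) to the matrix entries without an off-by-one error in how side-spins versus rung-spins are threaded, and making sure the overall monomial prefactor $(q^2+q^3)^{2n}$ is not double-counting the side-spins that are already weighted inside $T$. There is a real risk that the ``natural'' transfer matrix one writes down differs from $T$ by conjugation or by a diagonal rescaling of $q$-powers; I would resolve this by checking $L_1$ and $L_2$ directly against a brute-force enumeration of admissible spin configurations on one and two hexagons, which pins down all conventions. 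The spectral estimate itself is then elementary: the entries of $T(\omega^{-1})$ are concrete algebraic numbers in $[0,1)$-ish range, the matrix is small, and a Perron--Frobenius argument with a hand-chosen positive test vector finishes it. One could alternatively invoke Lemma \ref{lemconvradius}-style reasoning, but here a direct spectral-radius bound is the most transparent route.
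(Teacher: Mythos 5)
Your overall route is the paper's: treat the lane as a one-dimensional transfer-matrix problem carried along the rungs $\{l_i,r_i\}$, identify $(q^2+q^3)^2T$ (with the boundary vectors of (\ref{defLlane})) as the transfer data, and then get the decay statement from a spectral bound on the scaled matrix at $q=\omega^{-1}$. The paper certifies that bound by simply computing the three eigenvalues of $(q^2+q^3)^2T$ at $\omega^{-1}$ (approximately $0.0916$, $0.1459$, $0.9297$); your Perron--Frobenius test-vector certificate $\widehat T(\omega^{-1})w\leq\theta w$, $\theta<1$, is an equally valid and arguably more checkable way to finish, so that part is fine.

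The concrete flaw in your plan is the identification of the three states. They are not the values $1,2,\{3\}$ of a single outgoing side-spin: the indices of $T$ are the three admissible \emph{patterns of the whole rung} $(x_{l_i},x_{r_i})$, namely $(1,\geq 2)$, $(\geq 2,1)$ and $(\geq 2,\geq 2)$, with the weights $q$ and $q^2+q^3$ of the two rung vertices distributed between the entries of $T$, the prefactor $(q^2+q^3)^{2n}$ and the boundary vectors. (Incidentally there are eight, not six, pairs in $\{1,2,3\}^2$ with sum $\geq 3$.) Your single-spin reading cannot reproduce (\ref{formulatransferT3}): since spins $2$ and $3$ impose exactly the same constraint on their neighbours, the two rows (or columns) of $T$ labelled ``$2$'' and ``$3$'' would have to be proportional, differing only by the weights $q^2$ versus $q^3$, and they are not. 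The rung-pair classification works precisely because a spin $\geq 2$ constrains nothing while a spin $1$ forces all its neighbours to be $\geq 2$, so only the pattern of $1$'s on the current rung must be remembered; with that dictionary your entry-by-entry check goes through (e.g.\ for the $(1,1)$ entry: new rung $(1,\geq 2)$ of weight $q(q^2+q^3)$, the side vertex squeezed between two spins $1$ contributing $q^2+q^3$, the other side vertex free contributing $q+q^2+q^3$, total $(q^2+q^3)^2\,q(q+q^2+q^3)$), and your proposed brute-force comparison of $L_1,L_2$ would indeed have exposed the mismatch. Once the state space is corrected, your argument coincides with the proof in the paper.
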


\begin{proof}
  We use transfer-matrices with respect to bases given by
given by
  $$\begin{array}{l||c|c}
    &l_i&r_i\\
    \hline
      b_1&q&q^2+q^3\\
      b_2&q^2+q^3&q\\
      b_3&q^2+q^3&q^2+q^3
    \end{array}$$
    encoding all possibilities for spins at two horizontally
    adjacent vertices
    $l_i,r_i$ belonging to the left and right boundary path
    of a lane, see (\ref{piecelane}).

    The reader should convince himself that $(q^2+q^3)^2T$
    is the transfer matrix with respect to the basis $b_1,b_2,b_3$.
    Denoting by $1,\geq 2$, respectively $\geq 2,1$ and $\geq 2,\geq 2$
    possibilities of spins at $l_i,r_i$ for $b_1,b_2,b_3$,
    the transfer matrix can be recovered from the following table
    with columns indexed by spins at $l_i,r_i$, rows indexed
    by spins at $l_{i+1},r_{i+1}$ and entries giving by all possibilities
    (using the notation $*$ for arbitrary spins in $\{1,2,3\}$ and
    $\geq 2$ for spins in $\{2,3\}$)
    for spins at the intermediary vertices $l'_i,r'_i$:
    $$\begin{array}{l|ccc}
        &1,\geq 2&\geq 2,1&\geq 2,\geq 2\\
\hline
        1,\geq 2&\geq 2,*&\geq 2,\geq 2&\geq 2,*\\
        \geq 2,1&\geq 2,\geq 2&*,\geq 2&*,\geq 2\\
        \geq 2,\geq 2&\geq 2,*&*,\geq 2&*,*\end{array}\ .$$

    Initial conditions are encoded by the final column-vector of 
    (\ref{defLlane}). The initial row vector sums over all possible
    states for the two final vertices.

    Exponentially fast decay to $0$ of $L_n$ at $q=\omega^{-1}$ follows
    from the three eigenvalues (given approximately by
    $0.0916,0.1459$ and $0.9297$) smaller than $1$ of the evaluation
    at $q=\omega^{-1}$ of $(q^2+q^3)^2T$.
\end{proof}

We set 
\begin{align}\label{defMn3}
M_n=\left(\begin{array}{cc}1&1\end{array}\right)
  \left(\begin{array}{cc}0&q\\q^2+q^3&q^2+q^3\end{array}\right)^{2n}\left(
                                       \begin{array}{c}q\\q^2+q^3\end{array}\right)
\end{align}
and
\begin{align}\label{formulaspincyl}
  S_n=\frac{L_n(1+M_n)}{1-L_n}\ .
\end{align}

\begin{prop}\label{propspincyl} Up to contributions of boundary
  vertices, 
  the series $S_n$ defined by formula (\ref{formulaspincyl})
  gives upper bounds for coefficients of the partition function
  (of the constrained spin model with spins in $\{1,2,3\}$) for
  finite hexagonal tilings of bounded cylinders
 tiled by a finite number of parallel embedded lanes consisting of $n$
  hexagons.

  Evaluations at $q=\omega^{-1}$ of $S_n$ decay exponentially fast to $0$.
\end{prop}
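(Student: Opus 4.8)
The plan is to reduce the tiling of a bounded cylinder to the tiling of a single finite embedded lane, treating the ``other'' direction of parallel lanes as a cyclic wrapping and then bounding the cyclic partition function by a geometric series in the linear one. First I would recall from Section \ref{sectCayley} that a bounded cylinder tiled by parallel embedded lanes of length $n$ can be cut open along one interior edge of the embedded-lane direction: cutting produces exactly a finite embedded lane with $n$ hexagons, whose partition function for the constrained spin model with spins in $\{1,2,3\}$ and adjacent-sum condition $x_i+x_j\geq 3$ is the polynomial $L_n$ of Proposition \ref{proppartlane}. Summing over the number $k\geq 1$ of parallel copies that wrap around the cylinder yields (up to boundary-vertex contributions, which is exactly the hedge built into the statement) a contribution bounded above by $\sum_{k\geq 1}L_n^k=L_n/(1-L_n)$, while a single ``seam'' layer accounting for the closing-up of the cylinder contributes at most the factor $1+M_n$, where $M_n$ of formula (\ref{defMn3}) is the partition function of the boundary path of a lane (a $2n$-step walk on spins $\{1,2,3\}$ with the same adjacency constraint, computed by the obvious $2\times2$ transfer matrix with rows/columns indexed by ``spin $=1$'' versus ``spin $\geq 2$''). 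Multiplying, the coefficients are dominated by $L_n(1+M_n)/(1-L_n)=S_n$, which is formula (\ref{formulaspincyl}).

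Next I would verify that each of the three ingredients is a genuine upper bound at the level of coefficients: this is where non-negativity of all entries of the transfer matrices (\ref{formulatransferT3}) and of the $2\times 2$ matrix in (\ref{defMn3}) does the work, since it guarantees that the geometric-series expansion $1/(1-L_n)=\sum_{k\geq 0}L_n^k$ has non-negative coefficients and that overcounting (identifying a cylinder with a sub-collection of its possible lane-decompositions, exactly as in the ``strict sub-tree'' caveats earlier in the paper) only inflates the count. The key table in the proof of Proposition \ref{proppartlane}, listing the allowed intermediary spins $l_i',r_i'$ for each pair of boundary-state transitions, is reused verbatim here; nothing new is needed combinatorially, only the bookkeeping that wrapping a lane around a cylinder of circumference $k$ multiplies transfer matrices $k$ times.

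For the exponential-decay claim at $q=\omega^{-1}$, I would invoke Proposition \ref{proppartlane} directly: the evaluations $L_n(\omega^{-1})$ decay exponentially fast because the largest eigenvalue of $(q^2+q^3)^2T$ at $q=\omega^{-1}$ is $\approx 0.9297<1$. A parallel (and easier) eigenvalue computation for the $2\times2$ matrix $\bigl(\begin{smallmatrix}0&q\\q^2+q^3&q^2+q^3\end{smallmatrix}\bigr)$ at $q=\omega^{-1}$ shows its spectral radius is $<1$ as well, so $M_n(\omega^{-1})\to 0$ and in particular $1+M_n(\omega^{-1})$ is bounded; meanwhile $L_n(\omega^{-1})\to 0$ forces $1/(1-L_n(\omega^{-1}))\to 1$ and is bounded for $n$ large (and for small $n$ one checks $L_n(\omega^{-1})<1$ numerically, just as the $P_n(\omega^{-1})<1$ checks were done in Section \ref{sectmax5}). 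Hence $S_n(\omega^{-1})=L_n(\omega^{-1})(1+M_n(\omega^{-1}))/(1-L_n(\omega^{-1}))$ is a product of an exponentially decaying factor and a bounded factor, so it decays exponentially fast to $0$. The main obstacle I anticipate is not any of these estimates but the careful justification that the ``boundary-vertex'' fudge in the statement genuinely absorbs all the discrepancy between the honest partition function of the capped half-/full cylinder and the product $S_n$ — i.e.\ making precise which finitely many vertices near the seam and near the lane-ends are being ignored, and checking that ignoring them (rather than, say, undercounting elsewhere) is consistent with the direction of the inequality. This is exactly the kind of messy-cap argument flagged in Section \ref{sectCayley}, and it is the step that deserves the most care even though it contributes nothing to the asymptotics.
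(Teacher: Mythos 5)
Your route to the bound is where the argument breaks. Adjacent parallel lanes are \emph{not} disjoint: two neighbouring lanes share an entire boundary path, so every vertex of the tiled cylinder lies on exactly two of the $k$ lanes. Hence bounding the cylinder's partition function by $\sum_{k\geq 1}L_n^k(1+M_n)$ ``lane by lane'' is not a coefficient-wise upper bound: restricting a cylinder configuration of weight $g$ to all $k$ lanes produces a $k$-tuple of lane configurations of total weight roughly $2g$, so $[q^g]$ of the cylinder partition function is not dominated by $[q^g]\bigl(L_n^k(1+M_n)\bigr)$ — indeed for $g$ below the minimal weight of $L_n^k$ the latter coefficient vanishes while the former need not. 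The paper's proof avoids the double count by keeping only every other lane: the $\lfloor k/2\rfloor$ alternating lanes are pairwise disjoint and, up to boundary vertices, cover all vertices when $k$ is even, while for odd $k$ exactly one boundary path of length $2n$ is left over; discarding the constraints between these disjoint pieces is what legitimately yields $L_n^{\lfloor k/2\rfloor}(1+M_n)$, and summing over the unknown number $\lfloor k/2\rfloor\geq 1$ of disjoint lanes gives $S_n$. In particular the factor $1+M_n$ is not a ``seam'' closing up the cylinder; it accounts for the leftover uncovered path in the odd case. Your final series coincides with the paper's, but the inequality you assert along the way is false, so the coincidence is not a proof.

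Second, your eigenvalue claim for the $2\times 2$ matrix is wrong, and on a point the paper explicitly emphasises: at $q=\omega^{-1}$ one has $q^2+q^3=\omega^{-1}$, and the matrix $\left(\begin{smallmatrix}0&\omega^{-1}\\ \omega^{-1}&\omega^{-1}\end{smallmatrix}\right)$ has eigenvalues $1$ and $-\omega^{-2}$, so its spectral radius is exactly $1$ and $M_n(\omega^{-1})$ does \emph{not} tend to $0$ — it only stays bounded (this is the content of the remark immediately following the proposition, and it is precisely why the paper must work with lanes of hexagons rather than simple paths for maximum $4$). Boundedness of $1+M_n(\omega^{-1})$, combined with the exponential decay of $L_n(\omega^{-1})$ from Proposition \ref{proppartlane}, is enough to conclude that $S_n(\omega^{-1})$ decays exponentially, so the second assertion can be repaired by weakening your claim; but as written both the upper-bound step and the decay step rest on incorrect statements.
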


\begin{proof} We consider a finite graph $\Gamma$ on a hexagonally tiled
  cylinder
which consists of $k$ embedded parallel finite lanes all consisting of $n$
hexagons.
Since the family of $k$ lanes consists of embedded lanes we have $k\geq 2$.
Up to a few boundary vertices, vertices of $\Gamma$ are contained
in the union of $k/2$ embedded disjoint lanes if $k$ is even and
they are contained in the union
of $(k-1)/2\geq 1$
disjoint embedded lanes and of a simple path (parallel to the boundaries of
the previous lanes) of length $2n$ if $k$ is odd.
The fraction $\frac{L_n}{1-L_n}$
gives upper bounds for the contribution coming from the $\lfloor k/2\rfloor$
disjoint embedded lanes (consisting of $n$ hexagons).
The factor $(1+M_n)$ accounts for the possible presence (for $k$ odd)
of a simple path of length $2n$.

The evaluation at $q=\omega^{-1}$ of the transfer matrix involved in
the definition of $M_n$ given by formula (\ref{defMn3}) has eigenvalues
$1$ and $-\omega^{-2}$. Evaluations of $M_n$ at
$q=\omega^{-1}$ define thus a bounded sequence.

Since $L_n$ decays exponentially fast to $0$, the final part of Proposition
\ref{propspincyl} holds.
\end{proof}

\begin{rem}
  The transfer-matrix
  $\left(\begin{array}{cc}0&q\\q^2+q^3&q^2+q^3\end{array}\right)$
involved in the definition of $M_n$ has 
  eigenvalue $1$ at $q=\omega^{-1}$. This is of course the reason for
  working with more complicated graphs (given by
  hexagonal tilings and associated to three maximal parts)
  when dealing with NSG-compositions of maximum $4$. 
\end{rem}


\begin{rem} The spin models considered in this section
  can be considered as a variation of the hard-square model
  (with spins in $\{0,1\}$ on vertices of the square lattice such
  that vertices with maximal spin $1$ are isolated). We work essentially
  with spins
  in $\{1,2,3\}$ on vertices of quotients of the $3$-regular
  graph defined by a hexagonal tiling such that vertices with
  minimal spin
  $1$ are isolated.
\end{rem}

\subsection{The  $\epsilon$-condition}\label{subsectepsilon4}

Given $\epsilon>0$, a NSG-composition
$x_1+\cdots+x_{m-1}$ of maximum $4$
satisfies the $\epsilon$-condition if it has at least three maximal
parts and if we have $l_3-l_1\leq\epsilon(m-1-l_1)$ for
the indices $l_1<l_2<l_3$ of the last three maximal parts.

\begin{prop}\label{propepsiloncond} There exists $\epsilon>0$ such that
  NSG-compositions of maximum $4$ satisfying the $\epsilon$-condition
  have growth-rate strictly smaller than $\omega$.
\end{prop}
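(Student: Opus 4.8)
The plan is to imitate closely the structure of the proof of Proposition \ref{propmax5twoparts}, replacing the ``two maximal parts'' bookkeeping by the ``three maximal parts'' bookkeeping already set up in Sections \ref{sectCayley}--\ref{sectlane}. Consider a NSG-composition $x_1+\cdots+x_{m-1}$ of maximum $4$ satisfying the $\epsilon$-condition, with $l_1<l_2<l_3$ the indices of its last three maximal parts ($x_{l_1}=x_{l_2}=x_{l_3}=4$). I would first use the pivot-factorisation with pivot-part $x_{l_1}$ (the third-last maximum). For the left composition $x_1+\cdots(+x_{l_1})$ the bound of Proposition \ref{propleftcomp4} applies, contributing a factor of convergency radius strictly larger than $\omega^{-1}$. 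So the whole game is to bound the right composition $x_{l_1+1}+\cdots+x_{m-1}$, whose parts lie in $\{1,2,3,4\}$ but which must satisfy the second line of the NSG-inequalities (\ref{fundeqsg}) for all pairs $\{i,j\}$ with $i+j-m\in\{l_1,l_2,l_3\}$; each such pair forces $x_i+x_j\ge 3$.

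For the right composition I would introduce the graph $\Gamma$ on vertices $l_1+1,\ldots,m-1$ with edges $\{i,j\}$ whenever $i+j-m\in\{l_1,l_2,l_3\}$. By the discussion of Section \ref{sectCayley}, $\Gamma$ is (up to a bounded ``cap'' of boundary vertices near fix-points) an orbit under the dihedral group $\langle\alpha,\beta,\gamma\rangle$ generated by the three reflections $x\mapsto l_1+m-x$, $x\mapsto l_2+m-x$, $x\mapsto l_3+m-x$, hence a hexagonal tiling of a cylinder or half-cylinder. The $\epsilon$-condition $l_3-l_1\le\epsilon(m-1-l_1)$ says precisely that the three reflection centres are very close together relative to the length of the tiled cylinder; equivalently, the embedded lanes (those of the direction with fastest escape-rate, which exist by the argument of Section \ref{sectCayley}) are long: each embedded lane contains $n\gtrsim c/\epsilon$ hexagons for a fixed constant $c$. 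The generating series of the constrained spin model with spins in $\{1,2,3\}$ (forced by $x_i+x_j\ge 3$, after noting the pivot corrections $x_{l_2}=x_{l_3}=4\notin\{1,2,3\}$) on such a tiled cylinder is bounded, up to boundary contributions and the factor $q^2$ correcting the two parts equal to $4$, by the series $S_n$ of Proposition \ref{propspincyl}. Summing over the possible lane-lengths $n\ge n_0(\epsilon)$, and invoking Lemma \ref{lemconvradius} with the exponentially-decaying evaluations of $L_n,M_n$ (hence $S_n$) at $q=\omega^{-1}$ from Proposition \ref{propspincyl}, gives a generating series
\begin{align*}
(1+q)^{O(1)}\,q^2\left(1+\sum_{n\ge n_0(\epsilon)}S_n\right)
\end{align*}
for right compositions, converging on a disc of radius strictly larger than $\omega^{-1}$; the factor coming from exceptional components (half-cylinder caps, and the short initial stretch of leaf-vertices $l_1+1,\ldots,l_3$) is handled exactly as the squared factor in (\ref{seriesmax5}) and (\ref{formulaleft4}) via Lemma \ref{lemconvradius}.

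Multiplying the left-composition bound (Proposition \ref{propleftcomp4}, radius $>\omega^{-1}$) by this right-composition bound (radius $>\omega^{-1}$) and using that the convergency radius of a product is at least the minimum of the radii of the factors, I conclude that the generating series enumerating NSG-compositions of maximum $4$ satisfying the $\epsilon$-condition converges on a disc of radius strictly larger than $\omega^{-1}$, i.e. has growth-rate strictly smaller than $\omega$, for $\epsilon$ small enough. The main obstacle, as in the maximum-$5$ case, is the careful accounting: verifying that the $\epsilon$-condition really forces all embedded lanes to be long (which is what makes $\sum_n S_n$ start far enough out to converge past $\omega^{-1}$, since $S_n(\omega^{-1})\to 0$), and cleanly isolating the bounded ``cap'' and leaf contributions so that they can be absorbed into the polynomial prefactors covered by Lemma \ref{lemconvradius}. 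Everything else is transfer-matrix bookkeeping of the type already carried out in Sections \ref{sectmax5} and \ref{sectlane}.
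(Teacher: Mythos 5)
Your overall architecture is the same as the paper's: pivot at the third-last maximal part $x_{l_1}$, bound left compositions by Proposition \ref{propleftcomp4}, build the graph $\Gamma$ on $\{l_1+1,\ldots,m-1\}$ with edges $\{i,j\}$ for $i+j-m\in\{l_1,l_2,l_3\}$, recognise it (away from the fix-points) as hexagonally tiled cylinders, note that the $\epsilon$-condition forces every lane to contain at least on the order of $1/\epsilon$ hexagons, and bound the constrained spin model by $\sum_{n\geq n_0(\epsilon)}S_n$ via Proposition \ref{propspincyl} and Lemma \ref{lemconvradius}. All of that is exactly what the paper does.

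The gap is in your treatment of the region where the lane structure fails. You describe it as a ``bounded cap'' and propose to absorb it, together with the leaf stretch $l_1+1,\ldots,l_3$, into prefactors of the form $(1+q)^{O(1)}$ or into the exceptional-component mechanism of (\ref{seriesmax5}) and (\ref{formulaleft4}). But under the $\epsilon$-condition the only control on this region is $l_3-l_1\leq\epsilon(m-1-l_1)$: the central set $\{\lfloor(l_1+m)/2\rfloor,\ldots,\lceil(l_3+m)/2\rceil\}$ containing the fix-points, the nearby incomplete hexagons, and the initial low-degree vertices together comprise $\Theta(l_3-l_1)$ vertices, which is unbounded (it may grow linearly in $m$), so it is not a polynomial prefactor and it is not analogous to the one or two exceptional paths of the maximum-$5$ case, whose partition functions $P_n$ were shown to decay at $q=\omega^{-1}$. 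No such decaying partition function is exhibited (or easily available) for the messy neighbourhood of three nearby fix-points; the only bound at hand is the trivial one, $(q+q^2+q^3)^a$ with $a=O(l_3-l_1)$, and summing that over unbounded $a$ gives a series with convergency radius below $\omega^{-1}$, which would destroy your product bound. Your displayed upper bound $(1+q)^{O(1)}q^2\bigl(1+\sum_{n\geq n_0(\epsilon)}S_n\bigr)$ in fact omits the spins carried by these vertices altogether. The missing ingredient is the argument the paper recycles from Proposition \ref{propmax4delta} (the series (\ref{seriesFrobclose4m})): bound the bad region trivially by $(q+q^2+q^3)^aq^2$, observe that it accounts for at most a proportion $\epsilon$ of the genus, and conclude that the growth-rate is at most a weighted geometric mean of the good rate ($<\omega$) and the trivial rate, which is still $<\omega$ once $\epsilon$ is small enough. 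Without this proportion-of-degree step (or a genuinely finer spin-model analysis of the cap), the proof does not close.
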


\begin{proof}
    We consider a NSG-composition $x_1+\cdots+x_{l_1}+\cdots  +x_{m-1}$ satisfying the $\epsilon$-condition
  (for some small enough positive $\epsilon$) with $x_{l_1}=x_{l_2}=x_{l_3}=4$
  for $l_1<l_2<l_3$ the last three maximal parts, as in the definition
  of the $\epsilon$-condition.
  We use pivot-factorisation with respect to the third-last maximal
  part $x_{l_1}$.
  
Proposition \ref{propleftcomp4} shows that 
  associated left compositions have growth-rate strictly
  smaller than $\omega$.

  In order to study right compositions, 
  we consider the graph $\Gamma$ with $m-1-l_1$
  vertices labelled $l_1+1,\ldots,m-1$
  and edges $\{i,j\}$ if $i+j-m\in\{l_1,l_2,l_3\}$.
  More precisely, the graph $\Gamma$ is the restriction to
  $\{l_1+1,\ldots,m-1\}$ of the orbit-graph underlying the
  group generated by the three real reflections $x\longmapsto m+l_i-x$
  with half-integral fix-points $(m+l_i)/2$ in $\frac{1}{2}\mathbb N$.
  Removing from $\Gamma$ the central set $\{\lfloor (l_1+m)/2\rfloor,\ldots,
  \lceil l_3+m)/2\rceil\}$ (called the set of \emph{central vertices})
  containing all fix-points, the graph $\Gamma$
  is a finite union of hexagonal tilings on cylinders
  (up to neglecting a few incomplete hexagons at boundaries),
  see Section \ref{sectCayley}. These hexagonal
  tilings contain parallel lanes of proper type
  consisting of a roughly equal number of at least $(m-l_1)/(2(l_3-l_1))-c$
  hexagons for some small
  constant $c$ (choosing $c=10$ certainly works). For $\epsilon$ small enough,
  we have thus a lower bound of $\frac{1}{4\epsilon}$ for the number of
  hexagons of all lanes.
  
  Proposition \ref{propspincyl} shows thus that the contribution of
  most vertices of $\Gamma$ (except central vertices and a few vertices
  near the other boundaries of these finite cylinder-tilings)
  can be bounded by $\sum_{n=\lfloor 1/(4\epsilon)\rfloor}^\infty
  \frac{S_n}{1-S_n}$
(for $S_n$ given by (\ref{formulaspincyl}))
  which has growth-rate strictly smaller than $\omega$ by Proposition
  \ref{propspincyl} and Lemma \ref{lemconvradius}.

  The contribution of the remaining $a=O(l_3-l_1)$ vertices can be bounded
  trivially by $(q+q^2+q^3)^aq^2$ (with the factor $q^2$ accounting
  for the values $x_{l_2}=x_{l_3}=4\not\in\{1,2,3\}$). Such vertices
  contribute thus at most a proportion of $\epsilon$ to the (degree of)
  the total contribution of such a NSG-composition.
  We can thus apply the arguments
  used for the series given by (\ref{seriesFrobclose4m}) during the proof of
  Proposition  \ref{propmax4delta}.
\end{proof}

\subsection{Proof of Proposition \ref{propmax4}}\label{sectproofpm4}

The main ingredient for proving Proposition \ref{propmax4}
is the following result:

\begin{prop}\label{propmax4often} There exists a natural integer $A$ such that
  NSG-compositions with at least $A$ parts of maximal size $4$
  have growth-rate strictly smaller than $\omega$.
\end{prop}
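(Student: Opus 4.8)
The plan is to reduce to the pieces already assembled in this section. The key dichotomy is: a NSG-composition of maximum $4$ with many maximal parts of size $4$ either has its last maximal part $x_{l}=4$ occurring with $m-1-l$ small (Frobenius close to $4m$), in which case Proposition \ref{propmax4delta} applies, or it has $m-1-l$ comparable to $m$, in which case, among its many maximal parts, at least three of the \emph{last} ones must be clustered close together near the right end. Quantitatively: suppose the composition has at least $A$ maximal parts, with last-maximal-part index $l$ and $m-1-l > \delta(m-1)$ (otherwise Proposition \ref{propmax4delta} finishes it). The $A$ maximal-part indices all lie in $\{1,\ldots,l\}$, but actually the relevant ones to track are those lying in the tail $\{l_0,\ldots,l\}$ where $l_0 = \lceil (1-\epsilon')(m-1)\rceil$ for a suitable small $\epsilon'$. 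If at least three maximal parts lie in such a short final window, the $\epsilon$-condition of Section \ref{subsectepsilon4} holds (with the window width $\le \epsilon(m-1-l_1) $ provided $\epsilon' $ is chosen small relative to $\epsilon\delta$), and Proposition \ref{propepsiloncond} applies.

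The first step, then, is to fix the constants: choose $\delta$ from Proposition \ref{propmax4delta} and $\epsilon$ from Proposition \ref{propepsiloncond}, then choose $\epsilon' < \epsilon\delta$ (roughly), and finally choose $A$ large enough that the remaining case — a NSG-composition of maximum $4$ with $m-1-l>\delta(m-1)$ and \emph{at most two} maximal parts in the final window $[\,(1-\epsilon')(m-1),\,m-1\,]$ but still at least $A$ maximal parts overall — is also controlled. For that remaining case, the maximal parts number at least $A-2$ inside $\{1,\ldots,\lceil(1-\epsilon')(m-1)\rceil\}$; since the tail $x_{l+1},\ldots,x_{m-1}$ has length $>\delta(m-1)$ and parts in $\{1,2,3\}$, while the ``head up to the last maximal part in the window'' already contributes a definite fraction of the genus, one bounds the head by Proposition \ref{propleftcomp4} (left compositions with respect to a maximal pivot) — more precisely by $q^{2(A-2)}$ times a derivative of the series in Proposition \ref{propleftcomp4}, to account for forcing at least $A-2$ entries to equal $4$ — and the tail, constrained by the NSG-inequalities $4\le 1+x_{l+i}+x_{m-i}$ coming from the pivot $x_l=4$, by the rational series $(1+q^2+q^3)/(1-I_3)$ truncated or not. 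The point is that forcing $A-2$ of the left-composition parts to take the maximal value $4$ costs a factor whose growth-rate, combined with the convergence radius $>\omega^{-1}$ of the left-composition series, can be driven below $\omega$ by taking $A$ large — exactly the ``differentiate and multiply by $q^{2a}$'' trick used in Propositions \ref{propuniquemaximum5} and \ref{propmax4Aparts}, here iterated $A-2$ times.

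Concretely the proof would run: (1) reduce to $m-1-l>\delta(m-1)$ via Proposition \ref{propmax4delta}; (2) split on whether $\ge 3$ maximal parts lie in the final $\epsilon'$-window; (3) if yes, verify the $\epsilon$-condition holds and invoke Proposition \ref{propepsiloncond}; (4) if no, at least $A-2$ maximal parts lie in the complementary initial segment, which forms (together with the pivot) a left composition with $\ge A-2$ forced entries equal to $4$; bound these by an $(A-2)$-fold differentiated, $q^{2(A-2)}$-shifted version of the series of Proposition \ref{propleftcomp4}, times the tail series, times combinatorial binomial factors $\binom{g}{A-2}$ absorbed as polynomial factors; (5) observe the resulting growth-rate is at most $\big(\text{conv.\ radius}^{-1}\big)^{1-3\delta'}\cdot(1/\rho)^{3\delta'}$-type expression that tends, as the forced-maximal-part penalty accumulates, to a value $<\omega$ for $A$ large. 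I expect the main obstacle to be step (4): making precise that ``forcing $A-2$ more maximal parts into a left composition of maximum $4$'' genuinely lowers the growth-rate below $\omega$, rather than merely keeping it $\le$ the ambient left-composition growth-rate. This requires noting that the left-composition series of Proposition \ref{propleftcomp4} already has growth-rate $\alpha<\omega$, so \emph{any} finite number of differentiations keeps it $<\omega$, and then combining multiplicatively with the tail contribution exactly as in the proof of Proposition \ref{propmax4delta}; the binomial blow-up $\binom{g}{A-2}$ is only polynomial in $g$ for fixed $A$ and hence harmless. Once Proposition \ref{propmax4often} is in hand, Proposition \ref{propmax4} follows by combining it with Proposition \ref{propmax4Aparts} (the bounded-number-of-maximal-parts case), since every NSG-composition of maximum $4$ falls into one of the two regimes.
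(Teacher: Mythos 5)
Your reduction to the case $m-1-l>\delta(m-1)$ via Proposition \ref{propmax4delta} is fine, but both steps you build on top of it break down. First, clustering three maximal parts in a \emph{fixed} final window $[(1-\epsilon')(m-1),m-1]$ does not give the $\epsilon$-condition: that condition demands $l_3-l_1\leq\epsilon(m-1-l_1)$, i.e.\ it bounds the ratio of the cluster width to the distance from $l_1$ to the right end, and this ratio is not controlled by shrinking $\epsilon'$ (take $l_1=m-11$, $l_3=m-2$: the ratio is $9/10$ whatever $\epsilon'$ is). Worse, in the regime you have placed yourself in ($m-1-l>\delta(m-1)$ with $\epsilon'<\epsilon\delta<\delta$) the window contains no maximal part at all, so your case (3) is vacuous and the whole burden falls on case (4). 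Second, in case (4) your bound never drops below $\omega$, no matter how large $A$ is: applying $q^{2a}\frac{d^a}{dq^a}$ to the series of Proposition \ref{propleftcomp4} leaves its growth-rate $\alpha<\omega$ unchanged, so increasing $A$ buys nothing, while the tail factor $(1+q^2+q^3)/(1-I_3)$ has growth-rate $1/\rho>\omega$ and the tail length $m-1-l$ is now \emph{not} restricted to a $\delta$-fraction of $m-1$; since the only constraint is $l\geq A$, the tail can carry almost all of the genus, so the interpolation $\alpha^{1-x}(1/\rho)^{x}$ from the proof of Proposition \ref{propmax4delta} is being invoked with $x$ bounded away from $0$ and can exceed $\omega$. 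Forcing many entries to equal $4$ is only a marking device; by itself it introduces no decay.

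The missing idea is multi-scale. The paper covers $\{1,\ldots,m-1\}$ by $N+1$ windows $I_0,\ldots,I_N$ with geometrically shrinking widths, each $I_j$ with $j<N$ having width at most $\frac{\epsilon}{1-\epsilon}$ times the distance from its right endpoint to $m-1$, and $(1-\epsilon)^N<\delta$. Taking $A=2N+1$, either one of the last $A$ maximal indices lies in $I_N$, and then the last maximal part is within $\delta(m-1)$ of the end so Proposition \ref{propmax4delta} applies, or by pigeonhole three of them lie in a single $I_j$ with $j<N$; adjoining at most $A-3$ Frobenius elements (your differentiation trick, but used exactly here) strips off the maximal parts to the right of this triple, which then becomes the triple of \emph{last} three maximal parts of a composition satisfying the $\epsilon'$-condition with $\epsilon'=\frac{\epsilon}{1-\epsilon}$, and Proposition \ref{propepsiloncond} finishes. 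So the engine is the $\epsilon$-condition (with its hexagonal-lane spin-model bound) applied after a scale-adapted pigeonhole, not a penalty for having many parts equal to $4$; a single fixed window cannot replace the geometric family of windows.
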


\begin{proof}\label{propAparts} We $\delta$
  such that Proposition \ref{propmax4delta} holds.
  We chose $\epsilon$ such that Proposition \ref{propepsiloncond}
  holds for $\epsilon'=\frac{\epsilon}{1-\epsilon}$.
  We choose now a natural integer $N$ such that $(1-\epsilon)^N<\delta$.
  We are going to prove that $A=2N+1$ works.

  Let $\mathbf x=x_1+\cdots+x_{m-1}$ be a NSG-composition having at least $A=2N+1$
  maximal parts of size $4$.
  For $i=0,\ldots,N-1$ we denote by $I_i$ the set of all integers
  of the real interval $$[\lceil m-1-(1-\epsilon)^i(m-1)\rceil,
  \lfloor m-1-(1-\epsilon)^{i+1}(m-1)\rfloor]\ .$$ We set $I_N=
  [\lceil m-1-(1-\epsilon)^N(m-1)\rceil,m-1]\cap \mathbb N$.
  We have of course $\cup_{i=0}^N I_i=\{1,\ldots,m-1\}$.
We denote by $\mathcal L$ the set of the $A=2N+1$ largest elements in the set
$\{i\ \vert 1\leq i\leq m-1,x_i=4\}$ indexing maximal parts of $\mathbf x$.
Proposition \ref{propmax4delta} and the definition of $I_N$
show that the set of such NSG-compositions with $\mathcal L$ intersecting $I_N$
has growth-rate smaller than $\omega$.

We can thus assume that $\mathcal L$ does not intersect $I_N$.
The pigeon-hole principle implies thus that there exists a set $I_j$ 
among the $N$ sets $I_0,\ldots,I_{N-1}$ containing at least three 
elements of $\mathcal L$. Let $l_1<l_2<l_3$ be the three largest elements
of $\mathcal L\cap I_j$. We denote by $a=\sharp\{i\in\mathcal L\ \vert\
i>l_3\}\leq A-3$ the number of elements of
$\mathcal L$ which are larger than $l_3$. 
  Adjoining $a$ times the Frobenius element to (the numerical semigroup
  associated to) $\mathbf x$, we get
  a NSG-composition $\underline{\mathbf x}$
  of genus $g-a$ with parts $\underline{x}_i=x_i$ if $i\leq l_3$
  and $\underline{x}_i=\min(3,x_i)$ if $i>l_3$. 
  Since three last maximal parts
  $\underline{x}_{l_1}=\underline{x}_{l_2}=\underline{x}_{l_3}=4$ of
  $\underline{\mathbf x}$ have indices $l_1,l_2,l_3$ in $I_j$ we get
  $$l_3-l_1\leq \frac{(1-(1-\epsilon))}{1-\epsilon}(1-\epsilon)^{i+1}(m-1)\leq \frac{\epsilon}{1-\epsilon}(m-1-l_1)$$
  which implies that $\underline{\mathbf x}$ satisfies the
$\epsilon'$-condition for $\epsilon'=\frac{\epsilon}{1-\epsilon}$.
  The generating series of such NSG-compositions is bounded by
  the series
  $$\sum_{k=0}^{A-3}q^{2k}\frac{d^k}{dq^k}G_{\epsilon'}(q)$$
  with $G_{\epsilon'}$ denoting the generating series for all NSG-compositions of
  maximum $4$ satisfying the $\epsilon'$-condition.
  Since $G_{\epsilon'}$ has growth-rate strictly smaller than
  $\omega$ for $\epsilon'$ small enough
  (see Proposition \ref{propepsiloncond}) and since
  algebraic differential operators do not increase the growth-rate we get
  the result.
\end{proof}

\begin{proof}[Proof of Proposition \ref{propmax4}]
Combine Propositions \ref{propmax4often} and \ref{propmax4Aparts}.
\end{proof}

\section{NSG-compositions with maximum $3$}\label{sectmax3}

\begin{prop}\label{propmax3} The generating series for the
number of all NSG-compositions with maximum $3$ is given by 
$$\frac{\sum_{j=3}^\infty \tilde c_jq^j}{1-q-q^2}$$
where $\tilde C=\sum_{g=3}^\infty\tilde c_gq^g$ is the generating series
of all NSG-compositions with maximum $3$ ending with a maximal part.
\end{prop}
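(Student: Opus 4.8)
The plan is to set up a bijective decomposition of NSG-compositions with maximum $3$ by cutting at the last part of maximal size. Let $\mathbf x = x_1 + \cdots + x_{m-1}$ be an NSG-composition with $\max(x_1,\dots,x_{m-1}) = 3$, and let $x_l = 3$ be its last maximal part (so $x_{l+1},\dots,x_{m-1} \in \{1,2\}$). I would like to claim that the map $\mathbf x \mapsto \big( (x_1+\cdots+x_l),\, (x_{l+1}+\cdots+x_{m-1}) \big)$ is a bijection from the set of such $\mathbf x$ onto the product of (NSG-compositions with maximum $3$ ending in a maximal part) $\times$ (compositions with all parts in $\{1,2\}$, including the empty one). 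Since genus adds under this decomposition, the generating series multiply, and by Proposition~\ref{propparts12} the second factor contributes $1/(1-q-q^2)$ while the first contributes $\tilde C = \sum_{j \ge 3} \tilde c_j q^j$, giving the stated formula.

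The first step is to check that the left piece $x_1 + \cdots + x_l$ is itself an NSG-composition. Here I would invoke the pivot-factorisation setup of Section~\ref{sectpivot}: taking the pivot-index $p = l$, the left composition must satisfy $x_i + x_j \ge x_{i+j}$ whenever $i+j \le l$, and these are exactly a subset of the NSG-inequalities already known to hold for $\mathbf x$. The subtle point is that Theorem~\ref{thmsemigrcomp} requires \emph{both} lines of (\ref{fundeqsg}); but for a composition with all parts in $\{1,2,3\}$ the second line $x_{m-s-t} \le x_{m-s} + x_{m-t} + 1$ holds automatically, since the left side is at most $3$ and the right side is at least $1 + 1 + 1 = 3$. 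So the left piece satisfies the first line (inherited) and the second line (automatic), hence is an NSG-composition; it has maximum $3$ and, by construction, ends in the maximal part $x_l = 3$, so it is counted by $\tilde C$.

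The second step is surjectivity and well-definedness of the inverse map: given an NSG-composition $y_1 + \cdots + y_{r}$ with maximum $3$ and $y_r = 3$, and a composition $z_1 + \cdots + z_s$ with parts in $\{1,2\}$, I must check that the concatenation $y_1 + \cdots + y_r + z_1 + \cdots + z_s$ is again an NSG-composition (with maximum $3$, and with last maximal part in position $r$). Again the second line of (\ref{fundeqsg}) is automatic. For the first line $x_a + x_b \ge x_{a+b}$ with $a + b \le r + s$: if $a + b \le r$ this is inherited from the $y$-composition; if $a + b > r$ then $x_{a+b}$ is one of the $z$'s, hence $\le 2$, while $x_a \ge 1$ and $x_b \ge 1$, so the inequality $x_a + x_b \ge 2 \ge x_{a+b}$ holds unless $x_a = x_b = 1$ and $x_{a+b} = 2$ — and this last configuration is fine since then $x_a + x_b = 2 = x_{a+b}$. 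So the concatenation is an NSG-composition, and its last part of maximal size $3$ is indeed $y_r$ (all $z_i \le 2$), so cutting there recovers the pair. The two maps are visibly mutually inverse.

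The main obstacle, I expect, is not any single inequality but making sure the case analysis on the first line of (\ref{fundeqsg}) is genuinely exhaustive — in particular the borderline $x_a = x_b = 1$, $x_{a+b} = 2$ case in both directions, and the edge cases where $l = m-1$ (empty right piece, handled by allowing the empty composition, which is the $q^0$ term of $1/(1-q-q^2)$) or where the pivot sits at very small index. Everything else is bookkeeping: genus is additive across the cut, so the generating function identity follows formally, and the result is
$$\frac{\tilde C(q)}{1-q-q^2} = \frac{\sum_{j=3}^\infty \tilde c_j q^j}{1-q-q^2}.$$
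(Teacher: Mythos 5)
Your proposal is correct and follows essentially the same route as the paper: pivot-factorisation at the last maximal part $x_l=3$, the observation that the second line of the NSG-inequalities (\ref{fundeqsg}) holds automatically for compositions with maximum at most $3$, and Proposition \ref{propparts12} for the factor $1/(1-q-q^2)$. Your write-up merely makes the inheritance of the first-line inequalities and the bijectivity check more explicit than the paper does.
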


\begin{rem} The number $\tilde c_g$ is the number of numerical semigroups
$S$ of genus $g=\sharp(\mathbb N\setminus S)$ with Frobenius number 
$f=3m-1$.
\end{rem}

\begin{proof}[Proof of Proposition \ref{propmax3}]
All NSG-inequalities 
$x_{m-s-t}\leq x_{m-s}+x_{m-t}+1$ (with $1\leq s,t\leq s+t<m$)
of the second line in
(\ref{fundeqsg}) are satisfied for compositions with maximum at most $3$.

Any NSG-composition $x_1+\ldots+x_{m-1}$ of maximum $3$
has thus a unique pivot-factorisation
$$(x_1+\cdots+x_{l-1}+3)+(x_{l+1}+\cdots+x_{m-1})$$
with pivot-part the last occurrence $x_l=3$ of a maximal part
into a left composition defining a NSG-composition with parts in $\{1,2,3\}$
ending with the pivot part $x_l=3$ (defined as the last maximal part)
and a right composition defining a NSG-composition with
all parts in $\{1,2\}$. This decomposition is bijective:
Concatenating a NSG-composition ending with a maximal part $3$
with a composition having all parts in $\{1,2\}$ yields an 
NSG-composition of maximum $3$.

The result follows since the numerator accounts for the initial
NSG-composition ending with $3$.
Possibilities for the final composition with all parts 
in $\{1,2\}$ are enumerated by the denominator, see
Proposition \ref{propparts12}.
\end{proof}

\subsection{Convergencency of $\tilde C$ at $\omega^{-1}$}
\label{secttildeC}

\begin{thm}\label{thmradiustildeC} 
  The generating series $\tilde C=\sum_{j=3}^\infty\tilde c_jq^j$
  for NSG-compositions ending with a maximal part of size $3$
  has growth-rate strictly smaller than $\omega$.
\end{thm}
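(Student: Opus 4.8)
The plan is to bound NSG-compositions ending with a last maximal part $x_{m-1}=3$ by a pivot-factorisation with pivot the last maximal part itself, and to analyse the two factors separately. Since the pivot is the \emph{final} part, the right composition is empty, so all the work goes into the left composition $x_1+\cdots+x_{m-2}(+x_{m-1})$ subject to the first line of the NSG-inequalities (\ref{fundeqsg}), together with the constraint that $3$ really is the last maximal part (so $x_1,\ldots,x_{m-2}\in\{1,2,3\}$). The structure is essentially the one already exploited for left compositions of maximum $4$ in Section \ref{subsectleftcomp4}: one picks a second maximal part $x_k=3$ with $k$ as large as possible subject to $k<m-1\le 2k$, forms the graph $\Gamma$ on vertices $1,\ldots,m-2$ with edges $\{i,j\}$ whenever $i+j\in\{k,m-1\}$, and observes that $\Gamma$ is a disjoint union of paths (degenerating to a union of single edges $\{i,m-1-i\}$ plus a possible isolated midpoint when $k$ does not exist). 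Along every edge $\{i,j\}$ the inequality $x_i+x_j\ge 3$ holds, so the generating series factors over connected components of $\Gamma$, and each component is the partition function of a constrained spin model with spins in $\{1,2,3\}$ and adjacent sums $\ge 3$.

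The key computation is the transfer-matrix evaluation of a path of length $n$ in this model: writing $A_n,B_n,C_n$ for the series of spin-sequences of length $n+1$ in $\{1,2,3\}$ with adjacent sums $\ge 3$ ending in spin $1$, $2$, respectively $\ge 2$, one gets
\begin{align*}
\begin{pmatrix}A_n\\B_n\\C_n\end{pmatrix}
&=\begin{pmatrix}0&0&q\\0&q^2&q^2\\q^2+q^3&q^2+q^3&q^2+q^3\end{pmatrix}^{n}
\begin{pmatrix}q\\q^2\\q^2+q^3\end{pmatrix},
\end{align*}
exactly the transfer matrix appearing in Proposition \ref{propspincyl} (cf.\ the matrix of $M_n$ in (\ref{defMn3})). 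Here is the first place to be careful: the evaluation of this $3\times 3$ transfer matrix at $q=\omega^{-1}$ has an eigenvalue equal to $1$ — this is precisely the degeneracy flagged in the remark after Proposition \ref{propspincyl} — so a naïve path-by-path bound only gives growth-rate exactly $\omega$, not strictly below. The remedy mirrors the maximum-$4$ argument: one must exploit \emph{two} of the three reflections simultaneously, i.e.\ use both the relation $i+j=m-1$ and $i+j=k$ so that the relevant combinatorial object is a hexagonal tiling of a cylinder (Section \ref{sectCayley}) rather than a single path, and invoke Proposition \ref{propspincyl}, whose lane-partition functions $S_n$ \emph{do} decay exponentially at $\omega^{-1}$. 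The case split is then: NSG-compositions for which no suitable $k\ge (m-1)/2$ exists, or for which $k$ is very close to $m-1$ (Frobenius number close to $3m$), are handled directly by a crude factorised bound $\frac{1+q^2}{1-(2q^3+q^4)}$ or by the procrastinational "kick the can" trick of Section \ref{sectmax3}'s framework, exactly as in Proposition \ref{propmax4delta}; all remaining compositions have a genuine cylinder with many hexagons and are bounded via $\sum_{n\ge n_0} S_n/(1-S_n)$, which by Lemma \ref{lemconvradius} converges on a disc of radius $>\omega^{-1}$.

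So the skeleton is: (i) reduce to left compositions with all parts in $\{1,2,3\}$ and a distinguished last maximal part $3$; (ii) choose the auxiliary second-largest maximal part $x_k=3$ and build $\Gamma$; (iii) dispose of the boundary cases ($k$ absent, or $k$ close to $m-1$) by an elementary rational bound with growth-rate $<\omega$, using the $\delta$-truncation argument of Proposition \ref{propmax4delta} so that a short "cap" of $O(1)$ vertices costs only a factor $(q+q^2+q^3)^{O(1)}$; (iv) for the bulk, recognise $\Gamma$ (minus its central vertices near the fix-points) as a hexagonal half-cylinder tiling covered by embedded lanes of length $\ge n_0\to\infty$, and bound its partition function by $\sum_{n\ge n_0}S_n/(1-S_n)$; (v) conclude by Proposition \ref{propspincyl} and Lemma \ref{lemconvradius} that the whole series converges strictly past $\omega^{-1}$, hence $\tilde\gamma<\omega$. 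I expect step (iv) — correctly book-keeping the central "cap", the two (possibly non-embedded) exceptional lanes, and the fact that embeddedness of at least one lane-direction guarantees the hexagon-count grows — to be the real obstacle; the transfer-matrix eigenvalue estimates in steps (iii)--(iv) are routine but must be carried out to pin down the numerical radii, and the matching with $\tilde C$ itself (the shift accounting for the final mandatory part $3$, contributing a factor $q$) must be stated carefully so that the growth-rate of $\tilde C$ is literally that of the bounding series.
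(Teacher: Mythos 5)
Your reduction to the left composition and your diagnosis of the obstruction (the $2\times 2$ transfer matrix with rows $(0,q)$ and $(q^2+q^3,q^2+q^3)$ has eigenvalue $1$ at $q=\omega^{-1}$) are correct, but the remedy you propose rests on a false geometric premise. The graph on $\{1,\ldots,m-2\}$ with edges $\{i,j\}$ for $i+j\in\{k,m-1\}$ uses only \emph{two} reflections; every vertex has degree at most $2$, so its components are paths, never hexagonal tilings of cylinders. The tilings, lanes and the series $S_n$ of Proposition \ref{propspincyl} belong to groups generated by \emph{three} reflections, i.e.\ to configurations with three maximal parts, and cannot be invoked for your two-relation graph: step (iv) collapses. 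Your case split also aims the heavy machinery at the wrong case. When $k$ is far from $m-1$ the translation length $m-1-k$ is large, so the paths are automatically short and nothing hexagonal is needed; the genuinely hard case is $k$ close to $m-1$, where the paths are long and, because of the eigenvalue $1$, neither your crude factor $\frac{1+q^2}{1-(2q^3+q^4)}$ (which only covers the case of a unique maximal part, all other parts lying in $\{1,2\}$) nor a Proposition~\ref{propmax4delta}-style argument applies --- the latter would require a growth-rate bound on left compositions of maximum $3$, which is exactly the statement being proved.

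What is missing is the actual two-pronged argument. If at least three maximal parts lie within $\epsilon m$ of the end (the $\epsilon$-condition adapted to maximum $3$), one really has three reflections and a slight modification of Proposition \ref{propepsiloncond} (lanes on hexagonal tilings) applies. Otherwise, after possibly deleting the second largest gap when the second-last maximal index is $\geq(1-\epsilon)m$ --- bookkept by the operator $1+q^2\frac{d}{dq}$ --- one may assume the second-last maximal index satisfies $k<(1-\epsilon)m$. Then two facts rescue the path model simultaneously: the path endpoints have indices $>k$ and hence carry spins in $\{1,2\}$ only, which forces $\tilde P_n(\omega^{-1})\leq\tilde P_2(\omega^{-1})=\frac{13-5\sqrt 5}{2}<1$ despite the eigenvalue $1$; and the path lengths are bounded by $2/\epsilon+1$, so only a bounded number of distinct lengths occurs and the resulting bound, a finite expression in the rational functions $1/(1-\tilde P_j)$ with $j\leq 2/\epsilon+1$, has convergency radius strictly larger than $\omega^{-1}$. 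Without the endpoint restriction and the uniform bound on path lengths (both absent from your write-up) the estimate only yields radius $\omega^{-1}$, i.e.\ growth rate $\omega$, and the strict inequality of Theorem \ref{thmradiustildeC} does not follow.
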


  \begin{proof}
    Given $\epsilon>0$,
  a NSG-composition $x_1+\cdots +x_{m-1}$
  ending with a maximal part $x_{m-1}=3$ satisfies
  the $\epsilon$-condition if it has at least three maximal parts and
  if we have $(m-k)<\epsilon m$ for the third last maximal part
  $x_k=3$.

  A slight modification of the proof of Proposition
  \ref{propepsiloncond} shows that NSG-compositions
  satisfying the $\epsilon$-condition have growth-rate
  smaller than $\omega$ for $\epsilon$ small enough.
  We fix $\epsilon$ to a strictly positive value which
  is small enough.
    
  We consider now NSG-compositions ending with a maximal part $x_{m-1}=3$
  which do not satisfy the $\epsilon$-condition (for our fixed value
  of $\epsilon$).
  Adding to (the numerical semigroup of)
  such a composition the second largest gap if it
  corresponds to an index $l$ at least equal to $(1-\epsilon)m$,
  we are reduced to consider NSG-compositions $x_1+\cdots+x_{m-1}$
  of maximum $3$ ending with a maximal part such that we have
  $k<(1-\epsilon)m$ for the (perhaps non-existing)
  index $k$ of the second last maximal part.
  
  We consider the graph $\Gamma$ with vertices $1,\ldots,m-2$ and edges
  $\{i,j\}$ if the sum $i+j$ belongs to $
  \{k,m-1\}$ (with $k$ missing if $x_{m-1}$ is the
  unique maximal part of $x_1+\cdots+x_{m-1}$. Connected components are
  paths of lengths at most equal to $2/\epsilon+1$ with endpoints in
$k,k+1,\ldots, m-2$, except for (perhaps existing)
  exceptional components ending at an element of
  $\{k/2,(m-1)/2\}\cap \mathbb N$.
The associated
  constrained Ising model has spins in $\{1,2,3\}$
except for endpoints $k+1,\ldots,
  m-2$ corresponding to spins in $\{1,2\}$.
  (If such a NSG-composition has a unique part of maximal size $3$,
  these paths are simply of length $1$, given by edges $\{m-i,i\}$,
  together with the (not necessarily existing) isolated vertex $(m-1)/2$.)
  The generating function for such paths of length $n$ is given by
  $$\tilde P_n=\left(\begin{array}{cc}1&1\end{array}\right)
  \left(\begin{array}{cc}0&q\\q^2+q^3&q^2+q^3\end{array}
  \right)^n\left(\begin{array}{c}q\\q^2\end{array}\right)$$
  and we have $\tilde P_n(\omega^{-1})\leq \tilde P_2(\omega^{-1})=
  \frac{13-5\sqrt{5}}{2}<1$ for all $n\geq 1$.
  Observe now that the number of NSG-compositions under consideration
  is bounded by the coefficients of the rational function 
  $$\left(1+q^2\frac{d}{dq}\right)\left((1+q^2+q^3)(1+q)
    \left(\sum_{j=1}^{\lceil 2/\epsilon+1\rceil}\frac{1}{1-\tilde P_j}\right)^4
  q^3\right)$$
with convergency radius strictly larger than $\omega^{-1}$.
(The differential operator accounts for a perhaps suppressed second-last
maximal part, the factor $(1+q^2+q^3)$ takes into account a perhaps
existing isolated vertex $(m-1)/2$ of $\Gamma$, the factor $(1+q)$
corrects for a (perhaps non-existing)
path with initial vertex starting at the second-last maximal part with
index at least $(1-\epsilon)m$,
the fourth power accounts for all possible paths of length
at least $1$ (there are at most four different possible lengths,
all bounded by $2/\epsilon+1$), the final factor $q^3$ corresponds of
course to the part $x_{m-1}=3$.)
  \end{proof}

  \begin{rem} The upper bound $2/\epsilon+1$ on lengths of paths in $\Gamma$
    is crucial in the previous proof: The evaluations $\tilde P_n(\omega^{-1})$
    tend to a strictly positive limit for $n\rightarrow\infty$
    (the corresponding transfer-matrix has an eigenvalue $1$ at $q=\omega^{-1}$).
    \end{rem}

\section{Proof of Theorem \ref{thmupperboundng}}\label{sectmainproof}
\begin{prop}\label{propgsatmost3}
The generating series for all NSG-compositions
with maximum at most $3$ is given by 
$$\frac{1+\tilde C}{1-q-q^2}$$
where $\tilde C$ is the generating series for all NSG-compositions
ending with a maximal part of size $3$.
\end{prop}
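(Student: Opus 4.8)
The plan is to split NSG-compositions with maximum at most $3$ according to the value of their largest part and then to invoke the two enumerative results already in hand. First I would note that every part of an NSG-composition is strictly positive, so that "maximum at most $3$" leaves exactly three mutually exclusive possibilities: the composition is empty (the genus-$0$ semigroup $S=\mathbb N$, whose composition has no parts), or all parts lie in $\{1,2\}$, or the maximum equals $3$. Because these cases are disjoint and exhaust the set in question, the generating series we want is the sum of the generating series of each case.

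For the first two cases taken together, Proposition \ref{propparts12} says that every composition with all parts in $\{1,2\}$ is an NSG-composition and that these compositions (including the empty one of genus $0$, which supplies the constant term) are enumerated by $\frac{1}{1-q-q^2}$. For the third case, Proposition \ref{propmax3} gives $\frac{\tilde C}{1-q-q^2}$ as the generating series of NSG-compositions with maximum exactly $3$, where $\tilde C=\sum_{g\ge 3}\tilde c_g q^g$ enumerates those that end with a maximal part. Adding the two contributions yields
$$\frac{1}{1-q-q^2}+\frac{\tilde C}{1-q-q^2}=\frac{1+\tilde C}{1-q-q^2},$$
which is the asserted formula, and the proof of Proposition \ref{propgsatmost3} is complete.

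There is essentially no obstacle here; the statement is a bookkeeping corollary of Propositions \ref{propparts12} and \ref{propmax3}. The only point deserving a second's attention is that the genus-$0$ (empty) composition must be counted exactly once: it appears as the constant coefficient $1$ of $\frac{1}{1-q-q^2}$, whereas $\tilde C$ has lowest-degree term $q^3$, so no double counting occurs. I would simply record this remark in passing.
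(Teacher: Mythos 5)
Your proposal is correct and follows exactly the paper's route: the paper's proof of Proposition \ref{propgsatmost3} is simply to combine Proposition \ref{propparts12} (giving $\frac{1}{1-q-q^2}$ for the maximum-at-most-$2$ case, empty composition included) with Proposition \ref{propmax3} (giving $\frac{\tilde C}{1-q-q^2}$ for maximum exactly $3$), just as you do. Your remark about the constant term is a harmless elaboration of the same bookkeeping.
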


\begin{proof} Follows from Proposition \ref{propparts12} and
  Proposition \ref{propmax3}.
\end{proof}

\begin{cor}\label{corn3g} We have 
  $$\lim_{g\rightarrow\infty}\frac{n_{\leq 3}
    (g)}{\left(\frac{1+\sqrt{5}}{2}\right)^g}
=\frac{5+\sqrt{5}}{2}\left(1+\tilde C((\sqrt{5}-1)/2)\right)$$
for the number $n_{\leq 3}(g)$ of NSG-compositions 
with genus $g$ and maximum at most $3$.
\end{cor}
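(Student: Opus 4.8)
The plan is to read the asymptotics of $n_{\leq 3}(g)$ off its generating function by locating its dominant singularity. By Proposition~\ref{propgsatmost3} we have $N(q):=\sum_{g\ge0}n_{\leq 3}(g)\,q^g=\dfrac{1+\tilde C(q)}{1-q-q^2}$. First I would record that the denominator factors as $1-q-q^2=(1-\omega q)(1+\omega^{-1}q)$, with zeros $\omega^{-1}$ and $-\omega$; only $\omega^{-1}$ lies in a disc of radius $<\omega$. By Theorem~\ref{thmradiustildeC} the series $\tilde C$ has growth-rate $\tilde\gamma<\omega$, hence converges on a disc of radius $1/\tilde\gamma>\omega^{-1}$, and I would fix $r$ with $\omega^{-1}<r<\min(1/\tilde\gamma,\omega)$. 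On $0<|q|<r$ the function $N$ is then holomorphic except at $q=\omega^{-1}$, where it has a pole which is simple and has nonzero residue: $1-q-q^2$ has a simple zero there, while non-negativity of the $\tilde c_k$ forces the numerator to satisfy $1+\tilde C(\omega^{-1})\ge1>0$.

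Second I would split off this pole, writing $N(q)=\dfrac{\lambda}{1-\omega q}+H(q)$ with $\lambda=\lim_{q\to\omega^{-1}}(1-\omega q)\,N(q)=\dfrac{1+\tilde C(\omega^{-1})}{1+\omega^{-2}}$ (the principal parts at $\omega^{-1}$ cancel), so that $H$ is holomorphic on $|q|<r$. Comparing coefficients gives $n_{\leq 3}(g)=\lambda\,\omega^g+[q^g]H(q)$, and Cauchy's estimate on $|q|=r'$ for any $\omega^{-1}<r'<r$ yields $[q^g]H(q)=O\big((r')^{-g}\big)=o(\omega^g)$; hence $n_{\leq 3}(g)/\omega^g\to\lambda$. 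It then remains only to simplify $\lambda$: from $\omega^{-2}=\big(\tfrac{\sqrt5-1}{2}\big)^2=\tfrac{3-\sqrt5}{2}$ one gets $1+\omega^{-2}=\tfrac{5-\sqrt5}{2}$, whence $\lambda=\dfrac{2}{5-\sqrt5}\big(1+\tilde C(\omega^{-1})\big)=\dfrac{5+\sqrt5}{10}\big(1+\tilde C(\omega^{-1})\big)$, which is precisely the constant $C$ of formula~(\ref{formulaforC}).

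A purely real-variable route avoids the transfer theorem altogether: expanding $\dfrac{1}{1-q-q^2}=\sum F_nq^n$ via Proposition~\ref{propparts12} gives $n_{\leq 3}(g)=F_g+\sum_{k=3}^{g}\tilde c_k F_{g-k}$, and since $F_n=\tfrac{5+\sqrt5}{10}\omega^n+\tfrac{5-\sqrt5}{10}(-\omega^{-1})^n$ satisfies $F_{g-k}/\omega^g\to\tfrac{5+\sqrt5}{10}\omega^{-k}$ for each fixed $k$ together with the uniform bound $F_n\le\omega^n$, while $\sum_k\tilde c_k\omega^{-k}=\tilde C(\omega^{-1})<\infty$ by Theorem~\ref{thmradiustildeC}, dominated convergence lets me pass the limit through the sum and recover the same value $\tfrac{5+\sqrt5}{10}\big(1+\tilde C(\omega^{-1})\big)$. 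Either way, the one substantive input is Theorem~\ref{thmradiustildeC}: it is exactly what promotes $\omega^{-1}$ to an \emph{isolated} simple pole of $N$ (ruling out any conspiracy of singularities on $|q|=\omega^{-1}$) and what forces the error term to be $o(\omega^g)$. Granting it, no genuine obstacle remains — only the elementary pole extraction and the rationalisation $\tfrac{2}{5-\sqrt5}=\tfrac{5+\sqrt5}{10}$ identifying the limit with $C$.
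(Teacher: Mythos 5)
Your proof is correct and essentially reproduces the paper's own argument: both subtract the dominant simple-pole contribution $\frac{5+\sqrt{5}}{10}\,\frac{1+\tilde C(\omega^{-1})}{1-\omega q}$ (the paper via the partial-fraction expansion of $1/(1-q-q^2)$, you via residue extraction at $q=\omega^{-1}$), invoke Theorem \ref{thmradiustildeC} to make the remainder holomorphic on a disc of radius strictly larger than $\omega^{-1}$, and conclude that the coefficients satisfy $n_{\leq 3}(g)=\frac{5+\sqrt{5}}{10}\left(1+\tilde C(\omega^{-1})\right)\omega^g+o(\omega^g)$; your convolution/dominated-convergence variant is only a cosmetic repackaging of the same estimate. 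Note that your limit value $\frac{5+\sqrt{5}}{10}\left(1+\tilde C(\omega^{-1})\right)$ agrees with the paper's own proof and with Formula (\ref{formulaforC}), so the factor $\frac{5+\sqrt{5}}{2}$ appearing in the corollary's statement is a typo rather than a defect of your argument.
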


\begin{proof}[Proof of Corollary \ref{corn3g}]
The algebraic identity
$$\frac{1}{1-q-q^2}=\frac{5+\sqrt{5}}{10}\frac{1}{1-\frac{1+\sqrt{5}}{2}q}+
\frac{5-\sqrt{5}}{10}\frac{1}{1-\frac{1-\sqrt{5}}{2}q}$$
and Theorem \ref{thmradiustildeC} show that 
$$\frac{1+\tilde C(q)}{1-q-q^2}-\frac{5+\sqrt{5}}{10}\ 
\frac{\left(1+\tilde C(\omega^{-1})\right)}{1-\omega q}$$
is holomorphic in a open disc of radius strictly larger than $\omega^{-1}$.
We have thus
$$n_{\leq 3}(g)=\frac{5+\sqrt{5}}{10}\left(1+\tilde C(\omega^{-1})\right)
\omega^g(1+o(1/g))
$$
for coefficients $n_{\geq 3}(g)$ of $(1+\tilde C(q))/(1-q-q^2)$ enumerating NSG-partitions
of genus $g$ with parts in $\{1,2,3\}$.
\end{proof}

\begin{proof}[Proof of Theorem \ref{thmupperboundng}]
  Corollary \ref{corn3g} implies that it is enough to show that
  NSG-compositions of maximum $\geq 4$ have growth-rate strictly smaller
  than $\omega$.
Theorem \ref{thmradiustildeC} and Proposition \ref{propmax4} 
ensure a growth-rate strictly smaller than $\omega$ for
NSG-compositions with maximum $4$. By Proposition \ref{propmax5}
we get then the result for NSG-compositions with maximum $5$
and Proposition \ref{propmaxgeq6} completes the proof.
\end{proof}

\section{NSG-compositions ending with a maximal
  part of size $3$}\label{sectNSGcomp3}

A good understanding of NSG-compositions ending with a maximal part
of size $3$ associated to the generating series $\tilde C$ is desirable
in view of Theorem \ref{thmupperboundng} and Proposition
\ref{propmax3}. This Section has four distinct parts:

In a first part we prove the following result:

\begin{thm}\label{thmlowerboundtildeC}
  The coefficient $\tilde c_g$ of the generating series $\tilde C$
  is at most equal to the coefficient $\alpha_g$
  of the rational series $A=\sum_{g=3}^\infty \alpha_g q^g$ defined by
  \begin{align}\label{seriesalphag}
    A&=\frac{1+q^2+q^3}{1-(q^2+q^3)(q+q^2+q^3)}q^3
                                    \ .
  \end{align}
  In particular, the convergency radius of $\tilde C$ is at most
  equal to the convergency radius $\rho_A=0.659982\ldots$
  of $A$ given by the positive real root $\rho$ of
  $1-(q^2+q^3)(q+q^2+q^3)$.
\end{thm}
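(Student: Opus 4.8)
The plan is to produce an explicit injection from NSG-compositions ending with a maximal part of size $3$ into the combinatorial objects enumerated by the rational series $A$, thereby bounding $\tilde c_g\le\alpha_g$ coefficientwise; the statement about convergency radii then follows at once, since dominance of coefficients forces the radius of convergence of $\tilde C$ to be at least as large as that of $A$... wait---it forces it to be \emph{at least} $\rho_A$, so to get ``\emph{at most} $\rho_A$'' I will instead need a lower bound $\tilde c_g\ge(\text{something with growth }1/\rho_A)$, or rather I will exhibit a subfamily of NSG-compositions counted exactly by a series with the same dominant singularity $\rho_A$. So the real plan is two-sided: first the upper bound $\tilde c_g\le\alpha_g$ (the displayed inequality), second a matching lower bound on the growth rate.

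For the upper bound I would pivot-factorise an NSG-composition $x_1+\cdots+x_{m-1}$ ending with $x_{m-1}=3$ with respect to this final maximal part, exactly as in Section \ref{sectpivot}. Since $x_{m-1}$ is the last maximal part, all the inequalities of the first line of (\ref{fundeqsg}) with $s+t=m-1$ give $x_i+x_{m-1-i}\ge 3$ for the remaining parts. Reading the composition $x_1+\cdots+x_{m-2}$ from both ends inward, the pairs $(x_i,x_{m-1-i})$ must each sum to at least $3$ with both entries in $\{1,2,3\}$; this is precisely the constraint whose transfer-matrix generating function appears in the factor $1/\bigl(1-(q^2+q^3)(q+q^2+q^3)\bigr)$, with the numerator $1+q^2+q^3$ absorbing a possible unpaired central part $x_{(m-1)/2}$ when $m-1$ is even, and the trailing $q^3$ accounting for $x_{m-1}=3$ itself. (The structure here is a degenerate special case of the graph $\Gamma$ and spin model of Sections \ref{sectCayley}--\ref{sectlane}, where only the two reflections $x\mapsto m-1-x$ and $x\mapsto k-x$ are in play; when there is a single maximal part $\Gamma$ reduces to disjoint edges.) Collecting these contributions and discarding only those NSG-inequalities we have not used gives $\tilde c_g\le\alpha_g$, and hence $\mathrm{rad}(\tilde C)\ge\rho_A$.

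For the reverse inequality on the convergency radius I would check that the bound is essentially tight by exhibiting NSG-compositions that saturate the constraint: take compositions of the form $p_1+p_2+\cdots+p_{r}+3$ where the $p_i$, read in symmetric pairs around the centre, realise \emph{every} admissible pair with sum $\ge 3$ in $\{1,2,3\}^2$ and one checks directly from Theorem \ref{thmsemigrcomp} that such compositions are genuine NSG-compositions (all second-line inequalities hold automatically for parts in $\{1,2,3\}$, and the chosen symmetric pairing makes the first-line inequalities with $s+t=m-1$ tight rather than violated, while the other first-line inequalities are slack). This subfamily is counted by a series with the same denominator $1-(q^2+q^3)(q+q^2+q^3)$, hence with dominant singularity exactly $\rho_A$; therefore $\mathrm{rad}(\tilde C)\le\rho_A$, and combined with the previous paragraph $\mathrm{rad}(\tilde C)=\rho_A$ insofar as the theorem only claims $\le$. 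The main obstacle I anticipate is the bookkeeping in the upper-bound step: one must make sure that a single \emph{combinatorial} object (a sequence of symmetric pairs plus an optional centre plus the terminal $3$) injectively encodes the composition and that no admissible composition is missed, i.e. that the transfer matrix $(q^2+q^3)(q+q^2+q^3)$ correctly counts pairs $(a,b)\in\{1,2,3\}^2$ with $a+b\ge 3$ --- there are $8$ such pairs, and indeed $(q^2+q^3)(q+q^2+q^3)=q^3+2q^4+2q^5+q^6$ has coefficient sum $8$ at $q=1$, which is the sanity check that makes the argument go through.
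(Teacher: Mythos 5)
The displayed inequality you set out to prove, $\tilde c_g\le\alpha_g$, is false: already $\tilde c_6=3$ (the compositions $3+3$, $1+2+3$, $2+1+3$) while $\alpha_6=2$. The statement as printed has the inequality reversed by a typo; consistently with the theorem's label, with the remark following Theorem \ref{thmupperboundng} and with the ``in particular'' clause, what is meant and what the paper proves is $\alpha_g\le\tilde c_g$, and only that direction yields the claimed \emph{upper} bound $\rho_A$ on the convergency radius of $\tilde C$. You correctly sensed this tension but then resolved it the wrong way, spending the first half of your argument on the false coefficientwise bound. That half is also internally inconsistent: $(q^2+q^3)(q+q^2+q^3)=q^3+2q^4+2q^5+q^6$ has coefficient sum $6$, not $8$; it enumerates ordered pairs $(a,b)$ with $a\in\{2,3\}$, $b\in\{1,2,3\}$, whereas the pairs in $\{1,2,3\}^2$ with sum at least $3$ are counted by $2q^3+3q^4+2q^5+q^6$ (the polynomial $I_3$ of (\ref{formulaIn})). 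So the weak-admissibility count you describe would produce $(1+q^2+q^3)q^3/(1-I_3)$, not $A$, and in any case would prove nothing about the radius in the asserted direction.

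Your second half is the right --- and the only needed --- idea, and it is essentially the paper's proof, but your family is not correct as described. If you allow \emph{every} symmetric pair with sum $\ge 3$, you admit parts equal to $1$ in the first half, and the remaining first-line NSG-inequalities are then not automatically slack: $1+3+3$ is weakly admissible but violates $x_2\le 2x_1$, so it is not an NSG-composition; moreover such a family would be counted with denominator $1-I_3$, not $1-(q^2+q^3)(q+q^2+q^3)$, so it would not have singularity $\rho_A$ anyway. The paper instead takes $x_1,\dots,x_{\lfloor(m-1)/2\rfloor}\in\{2,3\}$, $x_{\lfloor(m-1)/2\rfloor+1},\dots,x_{m-2}\in\{1,2,3\}$ and $x_{m-1}=3$; then for any $i+j\le m-1$ one has $\min(i,j)\le\lfloor(m-1)/2\rfloor$, hence $x_i+x_j\ge 2+1=3\ge x_{i+j}$, while the second-line inequalities of (\ref{fundeqsg}) hold automatically for parts at most $3$, so every member is a genuine NSG-composition ending with a maximal part $3$. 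Pairing $x_i$ with $x_{m-1-i}$ (and accounting for the possible central part in $\{2,3\}$) shows this family is counted \emph{exactly} by $A$, giving $\alpha_g\le\tilde c_g$ and hence the bound $\rho_A$ on the radius of $\tilde C$; no complementary upper bound on $\tilde c_g$ is needed, since the lower bound $\omega^{-1}$ on the radius is the separate Theorem \ref{thmradiustildeC}.
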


In a second part we list a few initial coefficients of
$\tilde C$.

In a third part we give generating sequences for all
contributions to $\tilde C$ with only one or two maximal parts
(of size $3$).

The fourth part is speculative and non-rigourous: We believe that
the upper bound $\rho_A$ given by Theorem \ref{thmlowerboundtildeC}
on the convergency radius on $\tilde C$
is sharp and we describe 
(in a non-rigourous way) a family of
NSG-contributions ending with a maximal part $3$ which should
give the bulk of contributions to $\tilde c_g$ for $g\rightarrow\infty$.

\subsection{Proof of Theorem \ref{thmlowerboundtildeC}}

\begin{proof} We consider the set of all compositions
  $x_1+\ldots+x_{m-1}$ ending with a maximal part
  $x_{m-1}=3$, with $x_1,x_2,\ldots,x_{\lfloor (m-1)/2\rfloor}$
  in $\{2,3\}$ and with $x_{\lfloor (m-1)/2\rfloor+1},
  \ldots,x_{m-2}$ in $\{1,2,3\}$. If $i$ and $j$ are two strictly
  positive integers such that $i+j\leq m-1$, then $\min(i,j)\leq
  \lfloor (m-1)/2\rfloor$ and we have thus $x_i+x_j\geq 2+1=3$
  showing that all such compositions are NSG-compositions ending with
  a maximal part $3$. Regrouping parts $x_i,x_{m-1-i}$ and taking
  into account the central part $x_{(m-1)/2}$ (which exists only
  for odd $m$) we get the generating series
  $$A=\frac{1+(q^2+q^3)}{1-(q^2+q^3)(q+q^2+q^3)}q^3$$
  for the set of all such NSG-compositions.
  This proves the inequality $\alpha_g\leq \tilde c_g$.

  The convergency radius of $A$ is given by the smallest pole
  at the positive real root $\rho_A$ of the denominator of the
  rational series $A$. Since all real positive
  coefficients of $\tilde C$ are bounded below by the corresponding
  coefficients of $A$, the convergency radius of $\tilde C$ is
  at most equal to the convergency radius $\rho_A$ of $A$.
\end{proof}

\subsection{A few initial coefficients for $\tilde C$}

The following tables list a few numbers $\tilde c_g$ counting NSG-compositions
of genus $g$ ending with a maximal part of size $3$. 
The first table contains also the corresponding compositions,
encoded as words in $\{1,2,3\}^*$ (with omitted addition-signs):
$$\begin{array}{l|r|l}
g&\tilde c_g&\hbox{compositions}\\
\hline
3&1&3\\
4&0&\\
5&1&23\\
6&3&33,123,213\\
7&2&223,313\\
8&4&233,323,1223,2213\\
9&9&333,1233,2223,2313,3213,11223,12123,21213,22113\\
10&12&
2233,2323,3223,3313,12223,\\
&&21223,21313,22123,22213,23113,31213,32113
\end{array}$$

Coefficients $\tilde c_{1},\ldots,\tilde c_{50}$
of $\tilde C$ are given by
$$\begin{array}{r|rrrrrrrrrr}
  1-10&   0&0&1&0&1&3&2&4&9&12\\
 11-20&20&32&50&84&132&208&331&526&841&1333\\
\end{array}$$
$$\begin{array}{l|rrrrr}
  21-25& 2145& 3401& 5314& 8396&13279\\
  26-30& 20952& 33029& 51927& 81527& 128102\\
  31-35& 201700& 317461& 498911& 782868& 1226255\\
  36-40& 1919070& 3000905& 4687213& 7315975&11419861\\
  41-45& 17833383& 27857264& 43511423& 67908811& 105857661\\
  46-50&164837336& 256493732& 398937594& 620308837& 964299016
  \end{array}$$
  A sequence starting with initial coefficients of $\tilde C$ does
  presently not appear in \cite{OEIS}.
  
\begin{figure}[h]\label{figdihex}
\epsfysize=8cm
\center{\epsfbox{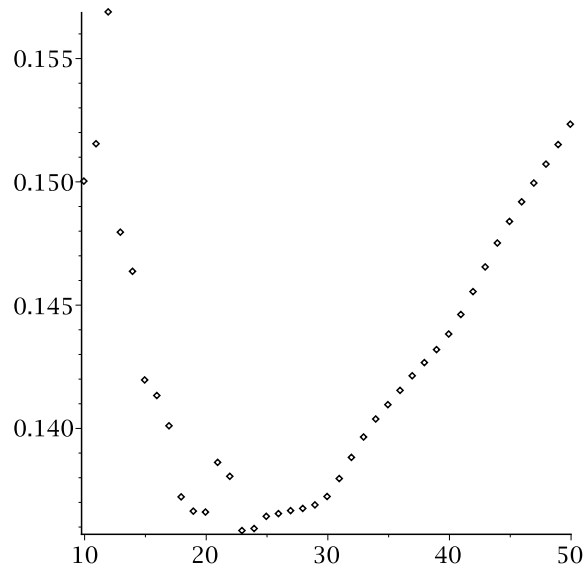}}
\caption{Quotients
  $\tilde c_i/(i a_i)$ for $i=10,\ldots,50$.}
\end{figure}  
  These coefficients were computed using the algorithm outlined
  in Section \ref{sectalgotildec}. Figure 2 displays some
  quotients $\tilde c_i/(i a_i)$ for $\sum_{n=3}^\infty a_nq^n$
  the rational series defined by Formula (\ref{seriesalphag})
of Theorem \ref{thmlowerboundtildeC}.

  Initial coefficients for $\tilde C$
  suggest the inequalities
  $$3.57<C<3.93$$
  for the constant $C$ occuring in Formula (\ref{formulaasymptsemigrps})
  of Theorem \ref{thmupperboundng}. The left inequality is
  proven since it was obtained by considering a large subset
  of NSG-compositions contributing to $\tilde C$.
  The right side is somewhat conjectural: It was obtained by
  assuming $\tilde c_i/\tilde c_{i-1}<\tilde c_{50}/\tilde c_{49}$
  and by replacing the missing coefficients $\tilde c_i$ for $i>50$
  with $\tilde c_{50}\left(\frac{\tilde c_{50}}{\tilde c_{49}}\right)^{i-50}$.

  \subsection{Contributions to $\tilde C$ with $k$ maximal parts}

  We denote by $\tilde c_{g,k}$ the number of NSG-compositions of genus
  ending with a maximal part of size $3$ and having a total number
  of $k$ maximal parts.

  Setting
\begin{align}\label{formtauk}
  \tau(k)&=\sum_{g}\tilde c_{g,k}\omega^{-g}
\end{align}
  we have $\tilde C(\omega^{-1})=\sum_{k=1}^\infty\tau(k)$ and
  Theorem \ref{thmupperboundng} implies immediately the following result:
  
  \begin{prop} The proportion of NSG-compositions with $k$ maximal
    parts of size $3$ is asymptotically given by
    $$\frac{\tau(k)}{1+\sum_{k=1}^\infty \tau(k)}\ .$$
  \end{prop}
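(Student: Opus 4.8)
The plan is to refine the pivot-factorisation of Proposition~\ref{propmax3} according to the number of parts equal to $3$, and then to run exactly the singularity analysis already carried out in the proof of Corollary~\ref{corn3g}. In outline: for each fixed $k$ I would produce a \emph{rational-type} generating series for NSG-compositions with exactly $k$ maximal parts $3$, read off its coefficient asymptotics from the pole at $\omega^{-1}$, and divide by the asymptotics $n(g)\sim C\omega^{g}$ supplied by Theorem~\ref{thmupperboundng}.

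First I would fix an integer $k\geq 1$ and set $G_k(q)=\sum_{g}\tilde c_{g,k}\,q^g$, so that $\tilde C=\sum_{k\geq 1}G_k$ and $G_k(\omega^{-1})=\tau(k)$ by the definition~(\ref{formtauk}) of $\tau$. An NSG-composition has exactly $k$ maximal parts of size $3$ precisely when its maximum equals $3$ and it contains $k$ parts equal to $3$; by Proposition~\ref{propmax3} any such composition factors uniquely as a left composition ending with a maximal part $3$, followed by a right composition with all parts in $\{1,2\}$. Since all the parts equal to $3$ lie in the left factor, left factors with exactly $k$ such parts are enumerated by $G_k$, while right factors are enumerated by $1/(1-q-q^2)$ (Proposition~\ref{propparts12}). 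Hence the generating series of NSG-compositions with exactly $k$ maximal parts of size $3$ is $G_k(q)/(1-q-q^2)$.

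Then I would note that $0\le G_k(q)\le\tilde C(q)$ for real $q$ in the disc of convergence of $\tilde C$, whose radius is strictly larger than $\omega^{-1}$ by Theorem~\ref{thmradiustildeC}; hence $G_k$ is itself holomorphic on a disc of radius strictly larger than $\omega^{-1}$. Using the partial-fraction identity
$$\frac{1}{1-q-q^2}=\frac{5+\sqrt5}{10}\,\frac{1}{1-\omega q}+\frac{5-\sqrt5}{10}\,\frac{1}{1-\frac{1-\sqrt5}{2}q}$$
exactly as in the proof of Corollary~\ref{corn3g}, the difference $\frac{G_k(q)}{1-q-q^2}-\frac{5+\sqrt5}{10}\,\frac{G_k(\omega^{-1})}{1-\omega q}$ extends holomorphically to a disc of radius strictly larger than $\omega^{-1}$. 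Consequently the number of NSG-compositions of genus $g$ with exactly $k$ maximal parts of size $3$ equals $\frac{5+\sqrt5}{10}\,\tau(k)\,\omega^{g}\bigl(1+o(1)\bigr)$.

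Finally I would divide by the total count: Theorem~\ref{thmupperboundng} gives $n(g)=C\,\omega^{g}\bigl(1+o(1)\bigr)$ with $C=\frac{5+\sqrt5}{10}\bigl(1+\tilde C(\omega^{-1})\bigr)=\frac{5+\sqrt5}{10}\bigl(1+\sum_{k\geq 1}\tau(k)\bigr)$, the last equality because $\tilde C(\omega^{-1})=\sum_g\tilde c_g\,\omega^{-g}=\sum_{k\ge1}\tau(k)$ (all terms non-negative, and the sum is finite since $\omega^{-1}$ lies inside the disc of convergence of $\tilde C$). The quotient tends to $\tau(k)/(1+\sum_{k\geq1}\tau(k))$, as claimed; the same limit is obtained whether the proportion is taken among all NSG-compositions of genus $g$ or only among those of maximum at most $3$, since by Theorem~\ref{thmupperboundng} the compositions of maximum $\ge4$ are of strictly smaller exponential order. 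There is no serious obstacle here beyond assembling results already in hand; the one point that genuinely needs care is that for \emph{each fixed} $k$ the error term is truly $o(\omega^{g})$, and this is exactly what the strict inequality (convergency radius of $\tilde C$) $>\omega^{-1}$ from Theorem~\ref{thmradiustildeC} provides.
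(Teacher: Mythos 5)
Your argument is correct and is exactly the route the paper has in mind: the paper dismisses the proof as an immediate consequence of Theorem \ref{thmupperboundng}, and what you write out (the pivot-factorisation of Proposition \ref{propmax3} refined by the number of parts equal to $3$, the singularity analysis of Corollary \ref{corn3g} applied to $G_k(q)/(1-q-q^2)$ using Theorem \ref{thmradiustildeC}, then division by $n(g)\sim C\omega^g$) is precisely the missing "immediate" argument. Nothing to object to; your closing remark that compositions of maximum $\geq 4$ are negligible correctly handles the only ambiguity in the statement.
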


  We describe in this section the easy value of $\tau(1)$ and we
  give a useful formula for the already complicated case of $\tau(2)$.

  \begin{prop}\label{propCtildeuniqemaxpart} We have
    \begin{align}\label{seriesuniquemaxpart3}
      \sum_{g=3}^\infty \tilde c_{g,1}q^g&=\frac{1+q^2}{1-(2q^3+q^4)}q^3\ .
    \end{align}
    We get in particular the evaluation $$\tau(1)=\frac{1}{\omega}+
    \frac{1}{\omega^3}=0.85410196624968454461376\ldots\ .$$
  \end{prop}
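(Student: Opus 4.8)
The plan is to describe all NSG-compositions of maximum $3$ having a \emph{unique} maximal part $x_l=3$, then bijectively factorise them. Since $x_l=3$ is the unique maximal part, every other part lies in $\{1,2\}$, and since the composition ends with a maximal part we automatically have $l=m-1$, i.e. the last part is the unique $3$. So such a composition has the shape $x_1+\cdots+x_{m-2}+3$ with $x_1,\dots,x_{m-2}\in\{1,2\}$, and the constraint is exactly that this be an NSG-composition: by Theorem \ref{thmsemigrcomp} the second line of (\ref{fundeqsg}) is automatic (all parts $\leq 3$), so the only conditions are $x_{s+t}\le x_s+x_t$ for $s+t<m$. These are automatic unless $s+t=m-1$, where they read $3=x_{m-1}\le x_s+x_{m-1-s}$, i.e.\ $x_s+x_{m-1-s}\ge 3$ for all $1\le s\le m-2$ with $s\ne m-1-s$; the ``diagonal'' term $s=m-1-s$ (when $m-1$ is even) gives $3\le 2x_{(m-1)/2}$, forcing $x_{(m-1)/2}\in\{2\}$ — wait, $2\cdot 2=4\ge 3$ but $2\cdot 1=2<3$, so indeed the central part must be $2$.

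First I would set up the pairing. Write $m-1=2r$ or $m-1=2r+1$. Pair up the indices $\{1,m-1-1\},\{2,m-1-2\},\dots$; each pair $(x_i,x_{m-1-i})$ must satisfy $x_i+x_{m-1-i}\ge 3$ with both entries in $\{1,2\}$, so the pair is one of $(2,2),(1,2),(2,1)$, contributing $q^4,q^3,q^3$ respectively to the genus; hence each pair contributes a factor $2q^3+q^4$. When $m-1$ is odd there is an unpaired central part which must be $2$, contributing a factor $q^2$; when $m-1$ is even there is no central part, contributing a factor $1$. Finally the terminal part $x_{m-1}=3$ always contributes $q^3$. Summing over the number of pairs $\ge 0$ and over the two parities gives
\begin{align*}
\sum_{g=3}^\infty \tilde c_{g,1}q^g&=\bigl(1+q^2\bigr)\sum_{n=0}^\infty (2q^3+q^4)^n\,q^3=\frac{1+q^2}{1-(2q^3+q^4)}\,q^3,
\end{align*}
which is (\ref{seriesuniquemaxpart3}). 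I should check the bijectivity claim carefully: given any word in $\{1,2\}$ of length $m-2$ whose symmetric pairs sum to $\ge 3$ and whose central letter (if present) is $2$, appending a $3$ yields an NSG-composition of maximum $3$ with a unique maximal part, and this is clearly a bijection with the set being counted, so no overcounting occurs.

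For the numerical evaluation $\tau(1)$, substitute $q=\omega^{-1}$ using $\omega^{-2}=1-\omega^{-1}$ (equivalently $\omega^2=\omega+1$). Then $2\omega^{-3}+\omega^{-4}=\omega^{-3}(2+\omega^{-1})$, and $1-\omega^{-3}(2+\omega^{-1})$ simplifies via the golden-ratio identities; likewise $1+\omega^{-2}=2-\omega^{-1}$. Carrying out the reduction in $\mathbb{Q}[\sqrt5]$ gives $\sum \tilde c_{g,1}\omega^{-g}=\omega^{-1}+\omega^{-3}$, i.e. $\tau(1)=\omega^{-1}+\omega^{-3}$, whose decimal expansion is $0.85410196624968454461376\ldots$. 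The only real obstacle is the bijectivity bookkeeping — making sure the forced central part and the parity split are handled without off-by-one errors in the index $m-1$ versus the genus — but this is routine once the pairing is written out explicitly.
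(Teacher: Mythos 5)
Your argument is correct and is essentially the paper's own proof: the paper phrases the same pairing as a constrained spin model on the graph with vertices $1,\dots,m-2$ and edges $\{i,m-1-i\}$ (spins in $\{1,2\}$ summing to at least $3$ along edges, the isolated central vertex forced to spin $2$, and a final factor $q^3$ for $x_{m-1}=3$), giving $(1+q^2)q^3/(1-(2q^3+q^4))$ and then $\tau(1)=\omega^{-1}+\omega^{-3}$. One trivial slip: the unpaired central index $(m-1)/2$ exists when $m-1$ is \emph{even} (your earlier remark about the diagonal term $s=m-1-s$ has it right), not when $m-1$ is odd, but this mislabelling of the parity does not affect the factor $1+q^2$ or the final formula.
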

  
  \begin{proof} We consider graphs with vertices
    $1,\ldots,m-2$ and edges ${i,m-1-i}$. Connected components
    are isolated edges and perhaps an isolated vertex $(m-1)/2$.
    We consider the spin model with spins $1$ or $2$ summing up at
    least to $3$ along edges. The isolated
    vertex $(m-1)/2$ (which exists only for odd $m$) is required
    to have spin $2$. The corresponding partition function,
    corrected by the final factor $q^3$ in order to account for
    the last maximal part $x_{m-1}=3$ is easily seen
    to be given by (\ref{seriesuniquemaxpart3}).
  \end{proof}

  \begin{prop}\label{propserkappa2} The generating series $\sum_{g=6}^\infty \tilde c_{g,2}q^g$
    counting all NSG-compositions ending with a last maximal part of size
    $3$ and having a unique additional maximal part is given by
    \begin{align*}
      &\frac{1+q^3+q^4}{1-(3q^6+4q^7+q^8)}
        q^3\left(1+(q+q^2)\frac{1+q^2}{1-(2q^3+q^4)}\right)q^3\\
      &+q^4\sum_{n=1}^\infty P_{n,0,0}\\
      &+q^4\sum_{n=2}^\infty\frac{P_{2n-1,0,1}(1+P_{n-1,1,0})}{1-P_{2n-1,1,1}}\\
      &+q^4\sum_{n=3}^\infty\left(\frac{P_{2n-1,1,1}+P_{n-1,1,0}}{1-P_{2n-1,1,1}}\right)
        P_{n-2,0,0}\\
      &+q^4\sum_{n=3}^\infty\left(\frac{P_{2n-1,1,1}+P_{n-1,1,0}}{1-P_{2n-1,1,1}}\right)
        \frac{P_{2n-3,0,1}(1+P_{n-2,1,0})}{1-P_{2n-3,1,1}}
    \end{align*}
    where
   $$P_{n,\epsilon_\alpha,\epsilon_\omega}=
  \left(\begin{array}{cc}\epsilon_\omega&1\end{array}\right)
  \left(\begin{array}{cc}0&q\\q^2&q^2\end{array}\right)^n
  \left(\begin{array}{c}\epsilon_\alpha q\\q^2\end{array}\right)\ .$$
  We have in particular
  $$\tau(2)=.7628736853796206184361443135239953344793590\ldots\ .$$
\end{prop}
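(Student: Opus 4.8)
The plan is to run the same machinery used in the proofs of Propositions \ref{propmax3}, \ref{propCtildeuniqemaxpart} and \ref{propmax5twoparts}: translate the NSG-condition into a constrained spin model on a graph that is a disjoint union of simple paths, and split according to the relative position of the two maximal parts. First I would reformulate. By the observation opening the proof of Proposition \ref{propmax3}, all second-line NSG-inequalities of (\ref{fundeqsg}) hold automatically for a composition of maximum $3$, and a first-line inequality $x_{s+t}\le x_s+x_t$ is non-trivial only when $x_{s+t}=3$. Hence an NSG-composition of maximum $3$ ending with a maximal part $x_{m-1}=3$ and having exactly one further maximal part $x_k=3$ (so $1\le k<m-1$, and all other parts lie in $\{1,2\}$) is precisely such a composition satisfying $x_i+x_j\ge 3$ whenever $i+j\in\{k,m-1\}$, $1\le i,j$, $i+j<m$; moreover a pair $\{i,j\}$ with $i+j\in\{k,m-1\}$ meeting $\{k,m-1\}$ imposes nothing. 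I would then put a graph $\Gamma$ on the non-maximal indices, with an edge for each constraining pair, record the fixed vertices $k/2$ and $(m-1)/2$ (when they are integers different from $k$), whose spin is forced to $2$, and observe that $\Gamma$ is the restriction to $\{1,\dots,m-1\}$ of the orbit graph of the infinite dihedral group generated by $x\mapsto k-x$ and $x\mapsto (m-1)-x$, with translation subgroup $(m-1-k)\mathbb Z$; by the discussion of Section \ref{sectCayley} every connected component of $\Gamma$ is a simple path (a finite cycle would produce a non-trivial translation of finite order). The transfer matrix $\left(\begin{smallmatrix}0&q\\q^2&q^2\end{smallmatrix}\right)$ propagates a $\{1,2\}$-spin chain with all adjacent sums $\ge 3$, and a path with $n+1$ vertices whose left (resp. right) endpoint is free exactly when $\epsilon_\alpha=1$ (resp. $\epsilon_\omega=1$), and forced to $2$ otherwise, has generating polynomial $P_{n,\epsilon_\alpha,\epsilon_\omega}$.

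Then comes the case analysis, governed by whether $2k\le m-1$ or $2k>m-1$. When $2k\le m-1$ (the two maximal parts lie far apart, $x_k$ in the first half), a direct inspection shows $\Gamma$ is a disjoint union of four-vertex paths $\{m-1-i,\ i,\ k-i,\ m-1-k+i\}$ for $1\le i<k/2$ (each free at both ends, of weight $P_{3,1,1}=3q^6+4q^7+q^8$), plus at most one extra two-vertex path ending at the fixed vertex $k/2$ when $k$ is even (weight $P_{1,1,0}=q^3+q^4$), plus a possibly present free vertex $m-1-k$ (weight $q+q^2$), plus a chain of isolated $(m-1)$-edges $\{i,m-1-i\}$ in the middle range (each of weight $P_{1,1,1}=2q^3+q^4$) together with a possibly present isolated fixed vertex $(m-1)/2$ (weight $q^2$), plus the two maximal parts (weight $q^3$ each). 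Summing the geometric series reproduces exactly the first summand $\frac{1+q^3+q^4}{1-(3q^6+4q^7+q^8)}\,q^3\left(1+(q+q^2)\frac{1+q^2}{1-(2q^3+q^4)}\right)q^3$, the last rational factor being the series of Proposition \ref{propCtildeuniqemaxpart}. When $2k>m-1$ (the two maximal parts lie close, with $d=m-1-k$ small), $\Gamma$ is, up to a bounded cap near the wall $x_k=3$ and near the fixed vertices, a union of roughly $(m-1-k)/2$ parallel lanes of almost equal odd edge-length $2n-1$ or $2n+1$ together with $f\in\{0,1,2\}$ exceptional paths ending at the fixed vertices $k/2$, $(m-1)/2$; exactly as in the proof of Proposition \ref{propmax5twoparts}, the contribution of the lanes of a given length is $\frac{P_{2n-1,1,1}+\dots}{1-P_{2n-1,1,1}}$ (the numerator enforcing the presence of at least one such lane, needed for convergence) and that of an exceptional path is $P_{n-1,1,0}$ or $P_{n-2,0,0}$ etc. Distinguishing the four sub-cases by the parities of $k$ and $m$ — equivalently by which of $k/2$, $(m-1)/2$ occur and which of them caps a lane versus an exceptional path — yields precisely the remaining four summands, the prefactor $q^4$ absorbing the two maximal parts together with the spin forced by adjacency to $x_k$.

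Each of the five series converges in an open disc of radius strictly larger than $\omega^{-1}$: the evaluation at $q=\omega^{-1}$ of $\left(\begin{smallmatrix}0&q\\q^2&q^2\end{smallmatrix}\right)$ has eigenvalues $\approx 0.713$ and $\approx -0.331$, both of modulus $<1$, so the polynomials $P_{n,\epsilon_\alpha,\epsilon_\omega}$ decay exponentially at $\omega^{-1}$ and Lemma \ref{lemconvradius} applies (this is also why, unlike in the proof of Theorem \ref{thmradiustildeC}, no bound on path-lengths is needed here: the relevant spins only run over $\{1,2\}$, so the transfer matrix has no eigenvalue $1$ at $\omega^{-1}$). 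The stated numerical value of $\tau(2)=\sum_g\tilde c_{g,2}\,\omega^{-g}$ is obtained by evaluating the resulting generating series at $q=\omega^{-1}$.

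The main obstacle is the exhaustive book-keeping of the case analysis: one must pin down, for every relative position of $k$ and $m-1$ and every parity of $k$ and $m$, exactly how many lanes occur and of which (odd) edge-lengths, which of their endpoints are free and which are forced to spin $2$, and how the caps near the wall $x_k=3$ and near the fixed vertices $k/2$, $(m-1)/2$ behave — all while verifying that the five summands partition the configurations with neither overlap nor omission (in particular the boundary situation $m-1=2k$ must be assigned to exactly one case). This is entirely analogous to, but combinatorially heavier than, the corresponding graph analysis for NSG-compositions of maximum $5$ having two maximal parts.
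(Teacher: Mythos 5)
Your reduction to a constrained $\{1,2\}$-spin model on the graph with edges $\{i,j\}$ for $i+j\in\{k,m-1\}$ is the right framework, and your treatment of the case $2k\le m-1$ is correct and complete: your direct component-by-component count reproduces the first summand exactly (the paper instead strips off the central parts $x_{k+1},\ldots$ and multiplies by $1+(q+q^2)\frac{1+q^2}{1-(2q^3+q^4)}$; the two bookkeepings agree).

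The gap is in the case $2k>m-1$, which is where the four remaining summands come from. First, your proposed classification (``the four sub-cases by the parities of $k$ and $m$'') is not the one underlying the formula. Writing $d=m-1-k$ and $k=(N-1)d+r$ with $1\le r\le d$, the orbits of the dihedral group generated by $i\mapsto k-i$ and $i\mapsto (m-1)-i$ split the vertices into two families of paths of edge-lengths exactly $2N-1$ and $2N+1$ (not ``almost equal'' lengths with free multiplicities), with multiplicities determined by $r$ and $d$, plus half-length exceptional paths ending at $k/2$ or $(m-1)/2$; the parities of $k$ and $m$ only decide whether these exceptional paths exist, i.e.\ they govern the optional factors $(1+P_{\cdot,1,0})$ and the two alternatives in the numerators $P_{2n-1,1,1}+P_{n-1,1,0}$. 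The four summands correspond instead to $d=1$, and, for $d\ge 2$, to $r=1$, $r=d$ (then the component of $k$ is the exceptional one, whence $P_{n-2,0,0}$), and $2\le r\le d-1$; one fixed parity class of $(k,m)$ meets all three of the latter, so a parity-based split cannot yield the stated five terms. Second, and more fundamentally, you import the logic of Proposition \ref{propmax5twoparts}, which is an upper-bound argument: allowing components of both lengths $2n\pm1$ in arbitrary numbers, discarding ``bounded caps'', and inserting numerators ``for convergence'' is fine for a bound but not for the exact identity claimed here. What is needed (and what the paper calls the lengthy but straightforward details) is a bijection: every admissible collection of path data (type and length of the component containing $k$, number of free ordinary components of each of the two lengths, presence of the exceptional components) is realised by exactly one pair $(k,m)$, and conversely; as phrased, your argument would overcount. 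A small symptom of the same looseness: the prefactor $q^4$ is $q^3$ for $x_{m-1}$ times one correction $q$ for the vertex $k$, which is counted inside its path as a forced spin-$2$ endpoint (the paper's ``diminishing $x_k=3$ by $1$''), not ``the spin forced by adjacency to $x_k$'' --- adjacency to $x_k$ forces nothing.
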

Coefficients $\tilde c_{g,2}$ for $g=1,\ldots,20$ are given by
$$0, 0, 0, 0, 0, 1, 1, 2, 3, 7, 10, 11, 25, 38, 43, 75, 123, 153, 233, 383\ .$$

\begin{proof}[Sketch of proof for Proposition \ref{propserkappa2}]
  Let $x_1+\cdots+x_k+\cdots+x_{m-1}$ be a NSG-composition
  with $x_k=x_{m-1}=3$ and all remaining parts in $\{1,2\}$.
  We assume first that $2k\leq m-1$. Removing all 'central' parts
  $x_{k+1},\ldots,x_{m-2-k}$ following $x_k$ reduces such a
  NSG-composition to a NSG-composition with $m-1=2k$. The associated
  graph with vertices $1,\ldots,m-2$ and edges $\{i,j\}$ such that
  $i+j\in\{k,2k\}$ gives rise to a
  spin-model with partition function $(1+q^3+q^4)q^6/(1-3q^6+4q^7+q^8)$.
  Possibly removed central parts account for a factor of
  $(1+(q+q^2)(1+q^2)/(1-(2q^3+q^4))$.

  We can now suppose $2k>m-1$. We consider the associated graph $\Gamma$
  with vertices
  $1,\ldots,m-2$ and edges $\{i,j\}$ with $i+j\in\{k,m-1\}$.
  Connected components of this graph are line-segments of length $2n-1,2n-3$
  and perhaps an exceptional line-segment of length $n-1$ and an
  exceptional line-segment of length $n-2$.
More precisely, connected components of length $2n-1$ start at the
last vertices of $\Gamma$ and are 'wrapped around' a (not necessarily
existing) exceptional component of length $n-1$. Components of length
$2n-3$ wrapped around the (not necessarily existing) exeptional
component of length
$n-2$ have the same structure.

  Exceptional line-segments end at a vertex of spin $2$. The connected
  component with endpoint $k$ corresponds also to a spin $2$ point
  after diminishing $x_k=3$ by $1$. Endpoints with spins restricted to
  $2$ are accounted for by setting one or both of the parameters
  $\epsilon_\alpha,\epsilon_\beta$ in the polynomials $P_{n,\epsilon_\alpha,\epsilon_\omega}$ (counting possibilities for spins in $\{1,2\}$ summing up at least to $3$ along edges in a line-graph of length $n$) to $0$.
  The different combinatorial
  possibilities correspond to the next four summands.
 The somewhat lengthy but straightforward details are left to the reader.
 \end{proof}

\subsection{Speculations on typical asymptotic
  contributions to $\tilde C$}

We say that a class of NSG-compositions ending with a last maximal
part of size $3$ is a \emph{asymptotically $\tilde C$-typical}
if the proportion of NSG-compositions of the class contributing to
$\tilde c_g$ tends to $1$ for $g\rightarrow\infty$.

The aim of this Section is to describe a class which should
be asymptotically $\tilde C$-typical. The class is
given by the set of all NSG-compositions close to
all NSG-compositions occuring in the proof of Theorem
\ref{thmlowerboundtildeC}. Somewhat informally,
an asymptotically $\tilde C$-typical NSG-composition
$x_1+\cdots+x_{m-1}$ has two large regular chunks:
The first chunk starts with $x_1$ and ends somewhere slightly
before $x_{\lfloor(m-1)/2\rfloor}$. Its parts are independent
random variables, equal to $2$
with probability $1/(1+\rho_A)$ (for $\rho_A$ as in Theorem
\ref{thmlowerboundtildeC})
and equal to $3$ with probability $\rho_A/(1+\rho_A)$.
It contains no parts of size $1$.
This part is followed by a small transitional region
centered around $(m-1)/2$ (containing parts $1$ with gradually
increasing density) ending at the beginning of the
second large chunk with parts given by independent
random variables equal to $\alpha\in\{1,2,3\}$
with probabilities $\rho_A^{\alpha-1}/(1+\rho_A+\rho_A^2)$.
The very last parts of an asymptotically $\tilde C$-typical
NSG-composition form again a transitional region containing parts of size $3$
with gradually decreasing density. (The presence of pairs $x_i=x_j=1$ of parts
$1$ with indices $i+j<m-1$ in the central
transitional region forces $x_{i+j}\leq 2$.) 

An asymptotically $\tilde C$-typical NSG-composition $x_1+\ldots+x_{m-1}$
of large multiplicity $m$ 
has genus $\frac{1}{2}\left(\frac{2+3\rho_A}{1+\rho_A}+
  \frac{1+2\rho_A+3\rho_A^2}{1+\rho_A+\rho_A^2}\right)m+O(\sqrt{m})$.

We ignore the typical size of the central and final transitional parts.

\section{Generic numerical semigroups}\label{sectgennsg}

We call NSG-compositions with maximum at most $3$ \emph{asymptotically
  generic} or simply \emph{generic} since they prevail proportionally
in large genus.

\subsection{A combinatorial model}\label{sectcombtree}

Generic NSG-compositions (NSG-compositions of maximum at most $3$)
can be extended by adding a last part $1$ or $2$.
Depending on the configuration of parts equal to $1$ an extension by a last
part of size $3$ is sometimes possible.
We encode this by a rooted binary tree with left descendants
corresponding to parts $1$, right descendants corresponding to
parts $2$ or (sometimes) $3$. We represent this by drawing thin edges for
right descendants corresponding only to extensions by a part of size
$2$ and by drawing fat
edges for right descendants corresponding to extensions by an
additional last part of size  $2$ or $3$.
A finite downward path starting at the root represents $2^f$
generic NSG-compositions if it contains $f$ fat edges.
All contributions to $\tilde C$ can be computed as follows.
A given fat edge (representing a final part of size $3$)
joined by $f$ (different) fat edges,
$l$ left edges and $s$ slim right edges to the root corresponds
to $2^f$ generic NSG-compositions ending with a last part $3$
yielding a total contribution of $q^{3+l+2s+2f}(1+q)^f$ to $\tilde C$.

\begin{figure}[h]\label{figtree}
\epsfysize=4.5cm
\center{\epsfbox{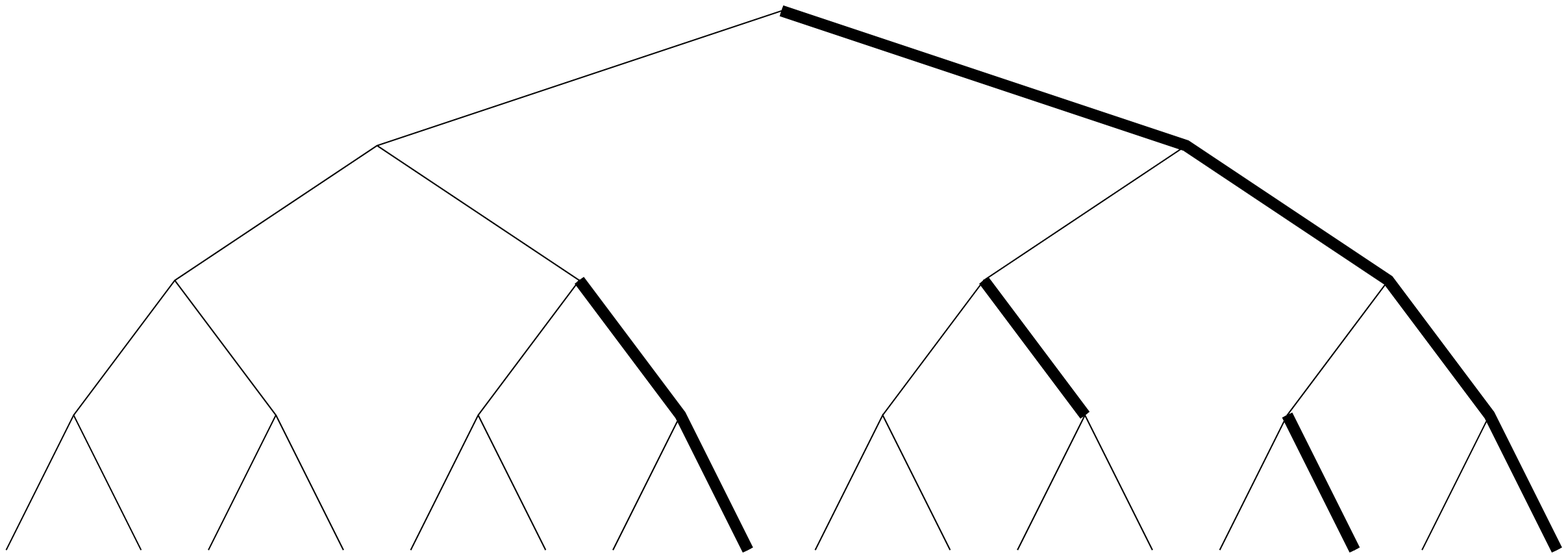}}
\caption{The tree of generic NSG-compositions.}
\end{figure}
\subsection{An algorithm of complexity $3^{2g/3}$ for $\tilde c_g$}
\label{sectalgotildec}

The tree of generic NSG-compositions suggests the following elementary
algorithm of complexity $3^{2g/3}$ for computing $\tilde c_g$.

Let $x_1+\cdots+x_{m-1}$ be an NSG-composition ending with
a maximal part $x_{m-1}=3$. Since $x_i+x_{m-1-i}\geq 3$, we have
$$g=x_1+\cdots+x_{m-1}\geq 3+3(m-2)/2$$
equivalent to the inequality $m\leq 2g/3$.

Computing all initial coefficients of $\tilde C$ up to $\tilde c_g$
can thus be achieved by considering all NSG-compositions of multiplicity
$m\leq 2g/3$ which end with a maximal part of size $3$.

This can be achieved as follows: We fix $m\leq 2g/3$.
We restrict first $x_1,\ldots,x_{m-2}$ to values in $\{1,2\}$.
Since $x_i+x_{m-1-i}\geq 3$, pairs $(x_i,x_{m-1-i})$ of distinct
parts belong to the set $\{(1,2),(2,1),(2,2)\}$. If $m\geq 3$ is odd,
we set $x_{(m-1)/2}=2$. Given such a fixed choice of $x_1,\ldots,x_{m-2}$,
we compute the number
$$f=\{k\leq m-1\ \vert\ x_k=2<x_i+x_{k-i},i=1,\ldots,\lfloor k/2\rfloor\}$$
of corresponding fat edges (representing parts of size $2$ which
can be replaced by parts of size $3$)
in the tree of generic NSG-compositions. Such a choice
yields a total contribution of
$$q^{\sum_{i=1}^{m-1}x_i}(1+q)^f$$
to $\tilde C$.

The computation of $f$ is quadratic in $g$ and does thus not increase
the exponential
complexity which comes from the roughly $3^{g/3}$ possible choices for
the pairs $(x_i,x_{m-1-i})$.

\section{Asymptotic properties for special compositions}

The $k$-th part $x_k$ of a composition
$x_1+\ldots$ with very large sum $g=\sum_j x_j$ with respect to the index $k$,
chosen uniformly among all compositions of $g$ has obviously
an asymptotic limit-distribution{\footnote{Disclaimer:
    Probability theory is a branch of mathematics where I tend
    to be absolutely sure and completely wrong.}}: It is equal to $n$ with
asymptotic probability $2^{-n}$ for $g\rightarrow \infty$.

Such asymptotic limit-distributions for parts exist more generally for
partitions with parts satisfying suitable restrictions
(satisfying some mild hypotheses)
depending at most on the index of parts (the resulting limit-distributions
depend then also on the index of the part under consideration).
An example is given by compositions $x_1+\cdots$ with a non-zero part $x_k$
in $\{1,\ldots,k\}$. Such compositions are enumerated by the sequence
A8930 of \cite{OEIS}.

From an enumerative point of view, one can ask for generating series
of compositions satisfying some restrictions.
The answer, given by
$1/(1-\sum_{a\in \mathcal A}q^a)$ is easy for compositions with all parts
$x_i$ in a common subset $\mathcal A$ of non-zero elements in $\mathbb N$.
More generally, for $x_i$ restricted to non-empty subsets $\mathcal A_i$ of
$\mathbb N\setminus\{0\}$, we get
$$\sum_{n=0}^\infty \prod_{k=1}^n\left(\sum_{a\in\mathcal A_k}q^a\right).$$

\begin{rem} It is possible to consider more generalised compositions
  where parts of equal size can have a finite number of
  different 'colours' (depending perhaps on the index of the part).
  We leave the details to the reader.
\end{rem}

Putting restrictions on parts depending not only on the index of parts but
also of all previous parts is more challenging:
Let us consider compositions $x_1+\cdots$ whose $k$-th non-zero part
belongs to some non-empty
subset $\mathcal A(x_1,\ldots,x_{k-1})$ of strictly positive
integers. The existence of limit-distributions for parts
$x_1,x_2,\ldots$ is probably
no longer easy to decide: $\mathcal A(x_1,\ldots,x_{k-1})=
\{1,2,\ldots,x_{k-1}\}$ (and no restriction for the first part)
leads for example to partitions
(compositions with decreasing parts)
which have no asymptotic limit-distribution. Ordering parts
of partitions in increasing order (by considering compositions
with finitely many parts such that $x_1\leq x_2\leq x_3\ldots$)
yields to a trivial asymptotic distribution: $x_k=1$ asymptotically
for almost all such compositions of sufficiently large integers.

A pseudo-example is given by $x_1=1$ (if it exists) and $x_k\in\{1,x_{k-1}+1\}$.
Regrouping suitable terms of such compositions yields a bijection
with compositions 
having all parts $x_i$ in the set $\{1,1+2,1+2+3,\ldots,{k\choose 2},\ldots\}$
of triangular numbers.

Since the set $\mathcal A(x_1,\ldots,x_{k-1})$
of possible values for $x_k$ depends
on $x_1,\ldots,x_{k-1}$, perhaps existing
limit-distributions for different parts $x_i,x_j$
(in compositions with $x_k$ in $\mathcal A(x_1,\ldots,x_{k-1})$)
are no longer independent and we can also consider asymptotic probabilities
that a given random compositions starts with $x_1+\ldots+x_k$.
The corresponding asymptotic
probabilities (and related quantities) will be discussed in
the next Section for generic NSG-compositions.

An example where an asymptotic limit-distribution
exists is given by generic NSG-compositions:
$\mathcal A(x_1,\ldots,x_{k-1})=\{1,2\}$ if $x_i=x_{k-i}=1$ for
some $i<k$ and $\mathcal A(x_1,\ldots,x_{k-1})=\{1,2,3\}$
otherwise. We will give a few more details below.

An asymptotic limit distribution should also exist for
\begin{align}\label{Akmcond}
\mathcal A(x_1,\ldots,x_{k-1})&=\{1,2,\ldots,
                                \min_{i,1\leq i<k}x_i+x_{i-k}\},
\end{align}
i.e. for composition satisfying only the NSG-inequalities
given by the first line of (\ref{fundeqsg}). The methods of this paper
give however no rigorous proof for the existence of limit-distributions
in this case since our proofs for compositions with parts of size
larger than $3$ involve also right factors in
pivot-factorisations.

In particular, the numbers $\nu_g$
(defined as the number of compositions $x_1+\ldots+x_m$
with $\sum_ix_i=g$ and $x_i+x_j\geq x_{i+j}$ whenever $i+j\leq m$)
of such compositions of $g$ have probably nice asymptotics.
The first $25$ values $\nu_1,\ldots,\nu_{25}$ are
$$\begin{array}{l}1,2,4,7,13,25,43,79,142,254,449,800,1407,2475,4339,7590,13222,\\
23009,39898,69068,119353,205842,354267,608805,1044528.\end{array}\ .$$

A fairly easy example with respect to enumeration is given
by $\mathcal A_1=\{1,2,3,\ldots\}$ and $\mathcal A_k(x_1,\ldots,x_{k-1})=
\{1,\ldots,x_{k-1}+1\}$.
Denoting by $G_k$ the generating series of all such compositions ending
with a last part of size $k$ we have
$$G_k=x^k\left(1+\sum_{i=\max(k-1,1)}^\infty G_i\right)\ .$$
This allows to compute finite series-expansions of $G_1,G_2,\ldots$
by 'bootstrapping'. The generating series for all such compositions
(with $x_i\leq x_{i-1}+1$) is of course defined by $1+\sum_{i=1}^\infty G_i$.
Its coefficients define the series A3116 (by definition) of \cite{OEIS}
and seem to have asymptotics of the form $\gamma\cdot \lambda^n$
with $\gamma=0.52893714\ldots$ and $\lambda=1.7356628\ldots$.

I ignore if parts of such compositions have
asymptotic limit-distributions.

A last example with probably rather small exponential growth
is given by compositions with arbitrary $x_1$ and with
$x_k\in\{1,1+x_{k-1},1+x_{k-1}+x_{k-2},\ldots,1+\sum_{j=i}^{k-1}x_j,\ldots,
1+\sum_{j=1}^{k-1}x_j\}$
(which forbids two consecutive parts of identical size larger than $1$).

\section{Probabilities related to generic NSG-compositions}\label{sectprobas}

Given a composition $\mathbf x=x_1+x_2+\cdots+x_k$, we denote by
$P_g(\mathbf x)$ the proportion of NSG-compositions of genus $g
\geq \sum_{i=1}^k x_i$ starting with
$\mathbf x$ (among all NSG-compositions of genus $g$).
This proportion tends to a limit $P(\mathbf x)=\lim_{g\rightarrow\infty}
P_g(\mathbf x)$ defining a natural probability law on generic compositions with $k$ parts in $\{1,2,3\}$.
The limit-probability satisfies
$$P(\mathbf x)=P(\mathbf x+1)+P(\mathbf x+2)+P(\mathbf x+3)$$
and is non-zero on a composition $\mathbf x=x_1+\cdots+x_n$
if and only if all parts $x_1,\ldots,x_n$ are elements of $\{1,2,3\}$
and $x_{i+j}\leq 2$ whenever $x_i=x_j=1$. Equivalently, $P(\mathbf x)$
is non-zero if and only if $\mathbf x$ encodes a path starting
at the root
in the combinatorial model of Section \ref{sectcombtree} where parts
of size $1$ correspond to left edges, parts of size $2$ to (slim or fat)
right edges and parts of size $3$ to fat right edges.
For simplicity, we call compositions $\mathbf x$
such that $P(\mathbf x)>0$ henceforth
\emph{generic NSG-compositions}.

The limit-probability $P$ encodes some aspects of the generic behaviour
of (uniformly distributed) NSG-compositions of large genus.

An interesting feature of these probabilities $P$ is the following result:
\begin{prop}\label{propP2P3} Given an arbitrary generic NSG-composition $\mathbf x$,
  we have
  $$\frac{P(\mathbf x+3)}{P(\mathbf x+2)}\in\{0,\omega^{-1}\}\ .$$
\end{prop}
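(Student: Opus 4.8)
The plan is to compute both probabilities $P(\mathbf x+2)$ and $P(\mathbf x+3)$ explicitly in terms of the combinatorial tree of Section \ref{sectcombtree} and compare them. The key observation is the one already recorded: a generic NSG-composition $\mathbf x$ corresponds to a downward path from the root of the binary tree, and $P(\mathbf x)$ is obtained by summing, over all finite extensions of this path, a weight counting how many genuine NSG-compositions each such extension represents. More precisely, writing $\mathbf x$ for a path with $f$ fat edges, the limit probability $P(\mathbf x)$ should equal $2^f$ times a universal series evaluated at $\omega^{-1}$ divided by the total mass; by the proof of Theorem \ref{thmradiustildeC} and Corollary \ref{corn3g} this normalising total mass is the finite constant $\tfrac{5+\sqrt5}{10}(1+\tilde C(\omega^{-1}))$, so all that matters for the \emph{ratio} $P(\mathbf x+3)/P(\mathbf x+2)$ is the numerator.

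First I would set up the bookkeeping. Given a generic NSG-composition $\mathbf x$ with last part $x_n$, adding a part $2$ always yields a valid generic NSG-composition, and adding a part $3$ yields one precisely when the new index $n+1$ is a \emph{fat} slot, i.e. when $x_i+x_j>2$ for every $i+j=n+1$ with $x_i=x_j=1$ forbidden — equivalently when $\mathbf x+2$ is attached to the root by a fat edge. Thus either $\mathbf x+3$ is not a generic NSG-composition, in which case $P(\mathbf x+3)=0$ and the ratio is $0$; or $\mathbf x+2$ ends with a fat edge, and I must show the ratio is $\omega^{-1}$. In the latter case the crucial point is that the \emph{future} of the two paths $\mathbf x+2$ and $\mathbf x+3$ is identical: appending a further part $1$, $2$ or $3$ after $\mathbf x+2$ is governed by exactly the same local conditions (on configurations of parts equal to $1$ at indices summing to the new index) as appending it after $\mathbf x+3$, because replacing a terminal $2$ by a terminal $3$ does not change which pairs of $1$'s already occur, nor does it affect any admissibility condition for later parts. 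Hence the sub-tree of descendants of $\mathbf x+2$ and that of $\mathbf x+3$ are canonically isomorphic as decorated trees, with corresponding fat-edge counts differing only by the single edge joining $\mathbf x+3$ (a fat edge) versus $\mathbf x+2$ (also a fat edge, by hypothesis) to their common ancestor path.

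Now I would translate this tree-isomorphism into generating functions. Let $\mathbf x$ have $f$ fat edges among its $n$ edges from the root, and let $T$ denote the generating series (in $q$) enumerating, with multiplicity $2^{(\text{number of fat edges in the extension})}$, all finite downward extensions of the path $\mathbf x+2$. By the tree-isomorphism, the same series $T$ enumerates extensions of $\mathbf x+3$. The contribution of $\mathbf x+2$ to the generating series of all generic NSG-compositions is then $q^{|\mathbf x|+2}(1+q)^{f+1}\,T(q)$ (the $+1$ because the edge to $\mathbf x+2$ is fat), while that of $\mathbf x+3$ is $q^{|\mathbf x|+3}(1+q)^{f+1}\,T(q)$. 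Passing to limit probabilities divides both by the common normalising constant and evaluates the $q$-dependence at $q=\omega^{-1}$, whence
$$\frac{P(\mathbf x+3)}{P(\mathbf x+2)}=\frac{q^{|\mathbf x|+3}(1+q)^{f+1}T(q)}{q^{|\mathbf x|+2}(1+q)^{f+1}T(q)}\Big|_{q=\omega^{-1}}=\omega^{-1}.$$

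The main obstacle I anticipate is making rigorous the passage from "ratio of generating-series contributions" to "ratio of limit probabilities". This requires knowing that the contributions of the two cylinder sets $\{\text{NSG-compositions starting with }\mathbf x+i\}$, $i=2,3$, each have a genuine asymptotic density of the form $c_i\,\omega^g(1+o(1))$ with $c_3/c_2=\omega^{-1}$; this follows from Corollary \ref{corn3g} together with Theorem \ref{thmradiustildeC}, since the relevant generating series are (up to the explicit monomial and $(1+q)^{f+1}$ prefactors) of the same shape $(\text{polynomial})\cdot(1+\tilde C\text{-type series})/(1-q-q^2)$ whose dominant singularity at $q=\omega^{-1}$ is a simple pole with residue proportional to $T(\omega^{-1})(1+\omega^{-1})^{f+1}$. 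The secondary technical point — verifying that the sub-trees of descendants of $\mathbf x+2$ and $\mathbf x+3$ are truly isomorphic as fat/slim-decorated trees — is a direct check from the description of the tree in Section \ref{sectcombtree} and the characterisation of generic NSG-compositions via the condition "$x_{i+j}\le 2$ whenever $x_i=x_j=1$": a terminal $2\mapsto 3$ change alters no earlier part and creates no new pair of $1$'s, so every later admissibility test gives the same verdict.
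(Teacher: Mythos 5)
Your proof is correct and, at its core, identical to the paper's: after disposing of the case where no part $3$ can be appended, the paper also observes that switching the appended $2$ to a $3$ (possible exactly when the new slot is ``fat'') is a genus-shifting bijection on the sets of NSG-compositions extending $\mathbf x$, and then converts the shift by one in genus into the factor $\omega^{-1}$ via the asymptotics $n(g)\sim C\omega^g$ of Theorem \ref{thmupperboundng} — your singularity-analysis detour through cylinder generating series is just a heavier way of invoking the same input. One cosmetic blemish: for a \emph{fixed} composition $\mathbf x$ the cylinder series carries no factor $(1+q)^{f+1}$ and $P(\mathbf x)$ is not ``$2^f$ times a universal series'' (those factors belong to the tree-path, which lumps together $2^f$ distinct and non-equiprobable compositions, not to $\mathbf x$ itself); since the same spurious factor appears for $\mathbf x+2$ and $\mathbf x+3$ it cancels and your ratio, hence your conclusion, is unaffected.
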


\begin{proof} This ratio is obviously $0$ if no additional part of
  size $3$ can be appended to $\mathbf x$. Otherwise
  we get a bijection between NSG-compositions of genus $g$
  ending with a last part $2$ (by appending an additional part $2$)
  and some NSG-composition of genus $g+1$ ending with a last
  part $3$ (by appending an additional part $3$).

  Theorem \ref{thmupperboundng} ends the proof.
\end{proof}

Proposition \ref{propP2P3} shows that
\begin{align}\label{P1overP123}
  \rho(\mathbf x)&=\frac{P(\mathbf x+1)}{P(\mathbf x+1)+P(\mathbf x+2)+P(\mathbf x+3)}
\end{align}
are essentially
the only interesting values: The
combinatorics of $\mathbf x$ determines
if $P(\mathbf x+3)=0$. Proposition \ref{propP2P3} determines then
$P(\mathbf x+1),P(\mathbf x+2)$ and $P(\mathbf x+3)$ uniquely in
terms of $\rho(\mathbf x)$ and $P(\mathbf x)=P(\mathbf x+1)+P(\mathbf x+2)+P(\mathbf x+3)$.

We have obviously $\rho(\mathbf x)\rightarrow
\omega^{-1}$ for most generic NSG-compositions $\mathbf x$ with multiplicity
(or genus) tending to $\infty$.

Using the tree model of
Section \ref{sectcombtree} and identifying infinite geodesics starting at the root of the the binary tree with binary expansions of
elements in $[0,1]$, the probability laws $P$
correspond to a continuous distribution function on $[0,1]$.

A random-variable related to these probabilities is the asymptotic number of
maximal parts equal to $3$ in generic NSG-compositions:
Let $A_g(n)$ be the proportion of generic NSG-compositions of genus $g$
having exactly $n$
parts of size $3$. We get asymptotic limit-probabilities
$$A(n)=\lim_{g\rightarrow\infty}A_g(n)=\frac{\tau(n)}{1+\tilde C(\omega^{-1})}$$
for $\tau(0)=1$ and $\tau(k)$ defined by (\ref{formtauk}) for $k\geq 1$.
The number $A(n)$ is the asymptotic proportion
of $NSG$-composition
with $n$ parts of
size $3$ among all NSG-compositions of very large genus.
(It is not necessarily to require that the $n$ parts of size $3$
are of maximal size: NSG-compositions with larger parts can be
neglected when considering asymptotics.)
Particularly interesting is the value of $A(0)$ (i.e. the proportion
of NSG-compositions having all parts in $\{1,2\}$) since we have obviously
the identity
$$1+\tilde C(\omega^{-1})=\frac{1}{A(0)}$$
linking the probability $A(0)$ to the value
of the constant $C=\frac{5+\sqrt{5}}{10A(0)}$, cf. Formula
(\ref{formulaforC}) in Theorem \ref{thmupperboundng}.

Similarly, we have
$$\frac{A(1)}{A(0)}=\frac{1+\omega^{-2}}{1-(2\omega^{-3}+\omega^{-4})}\omega^{-3}
=\frac{1}{\omega}+\frac{1}{\omega^3}$$
since $\frac{1+q^2}{1-(2q^3+q^4)}q^3$ is the generating series for all
NSG-compositions $x_1+\cdots+x_{m-1}$ with $x_{m-1}=3$ and
$x_1,\ldots,x_{m-2}$ in $\{1,2\}$
(satisfying $x_i+x_{m-1-i}\geq 3$), see Proposition
\ref{propCtildeuniqemaxpart}.

\begin{rem} Typical NSG-compositions
  of high genus have only very few parts of size $3$. They behave thus
  very differently from $\tilde C$-typical contributions to $\tilde C$
  which should have many maximal parts.
\end{rem}

A similar random-variable 
(on $\mathbb Z\setminus\{0\}$) defined by generic NSG-compositions
is given by 
$f-2m$ (for $f$ the Frobenius number and $m$ the multiplicity).

Last parts of generic NSG-compositions have also an asymptotic limit
distribution, simply given by independent Bernoulli distributions
yielding final parts of size $1$ with asymptotic
probability $\omega^{-1}$ and final
parts of
size $2$ with asymptotic probability $\omega^{-2}$. In particular, we
have most of the time
\begin{align}\label{limrho}
  \lim_{\vert \mathbf x\vert,\ P(\mathbf x)>0}\rho(\mathbf x)&=\omega^{-1}
\end{align}
with $\vert\mathbf x\vert$ denoting the length (number of summands)
of $\mathbf x$. (Exceptions can occur if the repartition of parts of
size $1$ in the first half of $\mathbf x$ is atypical.)

More precisely, a generic NSG-composition of large genus $g$ 
has typically multiplicity $\frac{5+\sqrt{5}}{10}g+O(\sqrt{g})$.
It consists of $g/\sqrt{5}+O(\sqrt{g})$ parts of size $1$,
of $\frac{5-\sqrt{5}}{10}g+O(\sqrt{g})$ parts of size $2$
and of a small number (given by the random variable
$A(n)$ considered above) of parts $3$ 
among its initial parts.

\begin{rem}
One can also consider probability laws 
corresponding to largest gaps in generic semigroups.
The probability that an element $f-a$ at distance $a$
of the Frobenius element (largest gap) $f=\max(\mathbb N\setminus S)$
of a generic numerical 
semigroup $S$ does not belong to $S$ tends to $\omega^{-2}$
for $a\rightarrow\infty$.
\end{rem}

\subsection{A toy generator for
  NSG-compositions using unfair coin tosses}

A naive way to generate NSG-composition with given multiplicity
$m$ is to choose generators in $\{m+1,m+2,\ldots\}$ independently
with probability $\lambda$ in $(0,1)$. This results
in NSG-compositions with maximum $2$ or $3$ having Frobenius numbers
close to $2m$ and genus $(2-\lambda)m+O(\sqrt{m})$.

For $\lambda=\frac{\sqrt{5}-1}{2}=\omega^{-1}$ this should
lead to more or less uniform random NSG-compositions
for large $m$, as suggested by
(\ref{P1overP123}) and (\ref{limrho}).

The corresponding probabilities $\tilde P_\lambda(\mathbf x)$
are easy to compute: Given $\mathbf x=x_1+\cdots+x_k$ with $x_1,\ldots,x_k$
in $\{1,2,3\}$,
the asymptotic probability (for $m\rightarrow\infty$)
to generate a NSG-composition starting with
$\mathbf x$ can be computed as follows:
$\tilde P_\lambda(\mathbf x)=0$ if and only if
there exists $i,j$ (not necessarily distinct)
with $i+j\leq k$ such that $x_i=x_j=1$ and $x_{i+j}=3$ (i.e. if
$\mathbf x$ is not a generic NSG-composition).
Otherwise, the probability $\tilde P_\lambda(\mathbf x)$ is a product of
$k$ factors in $\{\lambda,(1-\lambda),\lambda(1-\lambda),(1-\lambda)^2\}$
defined as follows:

\begin{itemize}
\item{} Every summand $x_i=1$ contributes a factor $\lambda$.
\item{} A summand $x_i=2$ contributes a factor $(1-\lambda)$
  if there exists $j<i$ such that $x_j=x_{i-j}=1$.
  It contributes a factor $\lambda(1-\lambda)$ otherwise.
\item{} A summand $x_i=3$ contributes a factor $(1-\lambda)^2$.
\end{itemize}

It is easy to check that the probabilities $\tilde P_{\omega^{-1}}$
defined in this way satisfy Proposition \ref{propP2P3}.
We have moreover
\begin{align}\label{tP1overtP123}
  \frac{\tilde P_{\omega^{-1}}(\mathbf x+1)}{\tilde P_{\omega^{-1}}(\mathbf x+1)+\tilde P_{\omega^{-1}}(\mathbf x+2)+\tilde P_{\omega^{-1}}(\mathbf x+1)}
  &=\omega^{-1},
\end{align}
cf. (\ref{P1overP123}) and (\ref{limrho}).

NSG-compositions sampled in this way (for a given fixed $\lambda$
in $(0,1)$) have typically only a small number of summands $3$.
Setting
$$\mu_h(x_1+\cdots+x_k)=\sum_{i,x_i=3}i^h\ ,$$
the limit-expectancy of $\mu_h$ (with respect to $\tilde P_\lambda(\mathbf x)$
is easy to compute and is given by
$$\mu_h=(1-\lambda)^2\sum_{n=0}^\infty (1-\lambda^2)^n\left((2n+1)^h+(1-\lambda)
  (2n+2)^h\right)$$
(which is a rational function and can be rewritten in terms of dilogarithms).
Indeed, a part $x_{2n+1}$, respectively $x_{2n+2}$, can be equal to
$3$ (with probability $(1-\lambda)^2$) if and only if
$\{x_i,x_{2n+1-i}\}\not=\{1\}$ (which happens with probability
  $1-\lambda^2$), respectively $\{x_i,x_{2n+2-i}\}\not=\{1\}$
  which happens with probability $1-\lambda$ in the case $i=n+1$.
  In both cases, there are $n$ such distinct pairs
  $\{i,2n+1-i\}$, respectively $\{i,2n+2-i\}$ containing two indices.
  Independency of choices among
  parts of size $2,3$ whenever possible leads to the formula.

In particular, the expected asymptotic
number of parts of
size $3$ in large random NSG-compositions sampled accordingly to
$\tilde P_\lambda(\mathbf x)$ is equal to
\begin{align}\label{formulaformu0}
\mu_0&=\frac{(2-\lambda)(1-\lambda)^2}{\lambda^2}
\end{align}
which evaluates to
$5-2\sqrt{5}=0.52786\ldots$ at $\lambda=\omega^{-1}$.

The following variation generates NSG-compositions of genus $g$
accordingly to the law $\tilde P_\lambda$:
We do not fix $m$ but consider it as an unknown,
  to be fixed later. We add generators $m+i$ with independent uniform
  probability $\lambda$ and generators $2m+j$ again with uniform
  independent probability $\lambda$ (most of them will be of the
  form $(m+i_1)+(m+i_2)$ for generators $m+i_1,m+i_2$ already chosen).
  This defines the beginning of an NSG-composition $x_1+x_2+\cdots$.
  Stop if $x_1+\cdots+x_k\geq g$. Set $m=k+1$ and accept
  $\mathbf x=x_1+\cdots+x_k$ if $x_1+\cdots+x_k=g$. Reject it and restart
  if $x_1+\cdots+x_k>g$ (which happens asymptotically with
  probability $(1-\lambda)/2$). The expected multiplicity
  $m$ is asymptotically given by
  $$\frac{g}{2-\lambda}-\mu_0$$
  with
  $\mu_0$ given by (\ref{formulaformu0}) denoting
  the asymptotic expectation for the number of parts of size $3$.

  I ignore how to compute the asymptotic probability
  $\tilde A_\lambda(n)=\lim_{g\rightarrow\infty}\tilde A_{\lambda,g}(n)$
  with $A_{\lambda,g}(n)$ denoting the proportion of NSG-compositions
  of genus $g$ sampled as above which have $n$ parts of size $3$.
  
  NSG-compositions generated by this algorithm with $\lambda=\omega^{-1}$
  are
  not (asymptotically) uniformly sampled. Indeed, the expected number of
  parts of size $3$ in uniformly sampled
  NSG-compositions is asymptotically equal to
  $$\frac{\sum_{n=1}^\infty n\tau(n)}{1+\sum_{n=1}^\infty \tau(n)}\geq
  \frac{\tau(1)+2\tau(2)}{1+\tau(1)+\tau(2)}=0.909389\ldots$$
  which is larger than the corresponding expectation $\mu_0$
  for the toy generator with $\lambda=\omega^{-1}$.

\noindent Roland BACHER, 

\noindent Univ. Grenoble Alpes, Institut Fourier, 

\noindent F-38000 Grenoble, France.
\vskip0.5cm
\noindent e-mail: Roland.Bacher@univ-grenoble-alpes.fr

\end{document}